\theoremstyle{plain}
\newtheorem{theorem}{Theorem}
\newtheorem{lemma}{Lemma}
\theoremstyle{definition}
\newtheorem{assumption}{Assumption}
\theoremstyle{remark}
\newtheorem{remark}{Remark}
\def\inner#1#2{\left\langle #1, #2 \right\rangle}
\def \S {\mathbf{S}}
\def \A {\mathcal{A}}
\def \X {\mathcal{X}}
\def \Y {\mathcal{Y}}
\def \R {\mathbb{R}}
\def \E {\mathbb{E}}
\def \1 {\mathbf{1}}
\newcommand{\Norm}[1]{\left\|#1\right\|}
\DeclareMathOperator*{\argmax}{arg\,max}
\DeclareMathOperator*{\argmin}{arg\,min}
\def \F {\mathcal{F}}
\def \P {\mathcal{P}}
\def \B {\mathcalB}
\newlength\mytemplen
\newsavebox\mytempbox
\newcommand\mybluebox{%
	\@ifnextchar[
	{\@mybluebox}%
	{\@mybluebox[0pt]}}
\def\@mybluebox[#1]{%
	\@ifnextchar[
	{\@@mybluebox[#1]}%
	{\@@mybluebox[#1][0pt]}}
\def\@@mybluebox[#1][#2]#3{
	\sbox\mytempbox{#3}%
	\mytemplen\ht\mytempbox
	\advance\mytemplen #1\relax
	\ht\mytempbox\mytemplen
	\mytemplen\dp\mytempbox
	\advance\mytemplen #2\relax
	\dp\mytempbox\mytemplen
	\colorbox{myblue}{\usebox{\mytempbox}}}
\def \E {\mathbb{E}}
\def \D {\mathcal{D}}
\def \R {\mathbb{R}}
\def \S {\mathcal{S}}
\def \A {\mathcal{A}}
\def \B {\mathcal{B}}
\def \G {\mathcal {G}}
\def \X {\mathcal{X}}
\def \P {\mathbb{P}}
\def \F {\mathcal{F}}
\newcommand{\hhat}[1]{%
\begingroup%
  \let\macc@kerna\z@%
  \let\macc@kernb\z@%
  \let\macc@nucleus\@empty%
  \hat{\mathchoice%
    {\raisebox{.2ex}{\vphantom{\ensuremath{\displaystyle #1}}}}%
    {\raisebox{.2ex}{\vphantom{\ensuremath{\textstyle #1}}}}%
    {\raisebox{.16ex}{\vphantom{\ensuremath{\scriptstyle #1}}}}%
    {\raisebox{.14ex}{\vphantom{\ensuremath{\scriptscriptstyle #1}}}}%
    \smash{\hat{#1}}}%
\endgroup%
}
\theoremstyle{plain}
\icmltitlerunning{A Near-Optimal Single-Loop Stochastic Algorithm for Convex Finite-Sum Coupled Compositional Optimization}
\begin{document}

\twocolumn[
\icmltitle{A Near-Optimal Single-Loop Stochastic Algorithm for Convex Finite-Sum Coupled Compositional Optimization}




\begin{icmlauthorlist}
\icmlauthor{Bokun Wang}{tamu}
\icmlauthor{Tianbao Yang}{tamu}
\end{icmlauthorlist}

\icmlaffiliation{tamu}{Department of Computer Science and Engineering, Texas
A\&M University, College Station, TX, USA}

\icmlcorrespondingauthor{Tianbao Yang}{tianbao-yang@tamu.edu}

\icmlkeywords{Machine Learning, ICML}

\vskip 0.3in
]



\printAffiliationsAndNotice{}  

\begin{abstract}
This paper studies a class of convex Finite-sum Coupled Compositional Optimization (cFCCO) problems for empirical X-risk minimization with applications including group distributionally robust optimization (GDRO) and learning with imbalanced data. To better address these problems, we introduce an efficient single-loop primal-dual block-coordinate stochastic algorithm called ALEXR. The algorithm employs block-coordinate stochastic mirror ascent with extrapolation for the dual variable and stochastic proximal gradient descent updates for the primal variable. We establish the convergence rates of ALEXR in both convex and strongly convex cases under smoothness and non-smoothness conditions of involved functions, which not only improve the best rates in previous works on smooth cFCCO problems but also expand the realm of cFCCO for solving more challenging non-smooth problems such as the dual form of GDRO.  Finally, we derive lower complexity bounds, demonstrating the (near-)optimality of ALEXR within a broad class of stochastic algorithms for cFCCO. Experimental results on GDRO and partial Area Under the ROC Curve (pAUC) maximization demonstrate the promising performance of our algorithm.
\end{abstract}

\section{Introduction}
\label{sec:intro}
We revisit the following regularized finite-sum coupled compositional optimization problem~\citep{wang2022finite}: 
\begin{align}\label{eq:primal}
\min_{x\in \X} F(x),\quad F(x):=\frac{1}{n}\sum_{i=1}^n f_i(g_i(x)) + r(x), 
\end{align}
where $\X\subset \R^d$ is a convex and closed set, the inner function $g_i(x) = \E_{\zeta_i\sim \mathbb{P}_i}[g_i(x;\zeta_i)]$ is in expectation form, and the objective function $F(x)$ is convex. 


In this paper, we study a special class of convex FCCO problems, termed cFCCO, which has a specific structure: The inner function $g_i:\X\rightarrow\R^m$ is convex and accessible via stochastic oracles (typically a loss function), while the outer function $f_i:\R^m\rightarrow\R$ is a convex and deterministic simple function (transformation) that is monotonically non-decreasing\footnote{w.r.t. each coordinate of its input. Note that we only need the monotonicity of $f_i$ when $g_i$ is nonlinear.}. Besides, the regularizer $r$ is also convex. Although the above condition is sufficient but not necessary for the convexity of $F$, fully exploiting it allows us to design a single-loop algorithm that achieves better complexity than previous algorithms and applies to non-smooth problems with convergence guarantees. Moreover, we can prove its (near-)optimality by establishing lower complexity bounds. 

Notably, the special structure of cFCCO arises in various machine learning applications, including group distributionally robust optimization (GDRO)~\citep{sagawa2019distributionally,soma2022optimal,zhang2023stochastic}, sub-population fairness~\citep{martinez2021blind}, partial area under the ROC curve (pAUC) maximization~\citep{DBLP:conf/icml/ZhuLWWY22}, and bipartite ranking~\citep{rudin2009p}. We postpone detailed descriptions of some of these problems to Section~\ref{sec:exps} and Appendix~\ref{sec:other_apps}.

While several algorithms have been developed to solve the convex FCCO problem in~\eqref{eq:primal} with theoretical guarantees of global convergence~\citep{wang2022finite, jiang2022multi}, these methods are limited by critical drawbacks: First, these algorithms only have convergence guarantees under the strong assumption that $f_i$ and $g_i$ are both smooth and Lipschitz continuous. Second, their convergence rates have poor dependence on the target optimization error $\epsilon$, batch sizes, and, in the strongly convex case, the condition number. Third, these algorithms rely on nested inner loops where the number of iterations in each loop depends on problem-specific constants, increasing the difficulty of implementation.

To address these limitations, we leverage the structure of the problem. Using the convex conjugate of $f_i$ denoted by $f_i^*$, the cFCCO problem can be reformulated~\eqref{eq:primal} into a convex-concave min-max problem: \begin{align}\label{eq:pd}
\min_{x\in\X}\max_{y\in\Y} \left\{\frac{1}{n}\sum_{i=1}^n \left[g_i(x)^\top y^{(i)} - f_i^*(y^{(i)})\right] + r(x)\right\},
\end{align}
where   $y^{(i)}\in \Y_i\subseteq \R_+^m$ is the $i$-th block of $y$,  and $\Y = \Y_1\times \dotsc \times \Y_n \subseteq \R_+^{nm}$. This reformulation is motivated by state-of-the-art primal-dual methods for empirical risk minimization (ERM)~\citep{alacaoglu2022complexity} and the general convex-concave min-max optimization problem~\citep{zhang2021robust}. However, the problem in~\eqref{eq:pd} presents unique challenges: (i) $g_i(x)$ may be neither linear nor deterministic, unlike that assumed in~\citet{alacaoglu2022complexity}; (ii) When $n$ is large, updating the entire dual variable $y$ in each iteration as in~\citet{zhang2021robust} becomes computationally prohibitive, motivating the block-coordinate dual update in our approach.

Our contributions can be summarized as follows:

$\bullet$ We propose a primal-dual block-coordinate stochastic algorithm named ALEXR to efficiently solve the cFCCO problems, which only requires $O(1)$ batch size per iteration.

$\bullet$ In both merely and strongly convex cases, the iteration complexities of ALEXR improve upon the iteration complexities in previous works~\citep{wang2022finite,jiang2022multi} on cFCCO problems with smooth $f_i$ and $g_i$ (See Table~\ref{tab:comparison} for a detailed comparison). Besides, we also provide the convergence analysis of ALEXR for cFCCO problems with non-smooth $f_i$ and $g_i$ in convex and strongly convex cases.

$\bullet$ For cFCCO problems with smooth and non-smooth  $f_i$, we prove lower  complexity bounds for an abstract first-order update scheme, which covers our ALEXR and previous algorithms as special cases. These lower bounds demonstrate the near-optimality of our proposed algorithm. 

\begin{table*}[t]
\caption{Comparison of iteration complexities to achieve $\epsilon$-optimal solution of~(\ref{eq:primal}) in terms of $\E[F(x_{\text{out}}) - F(x_*)]\leq \epsilon$ in the merely convex case and $\frac{\mu}{2}\E\Norm{x_{\text{out}} - x_*}_2^2\leq \epsilon$ in the $\mu$-strongly convex case, where $x_{\text{out}}$ is the output of each algorithm. $\tilde{O}$ hides $\text{poly}\log(1/\epsilon)$ factors. $S$ denotes the size of a batch $\S\subset[n]$ and $B$ denotes the size of batch $\B_i$ sampled from $\mathbb{P}_i$ for each $i\in\S$. In the ``Monotonicity'' column, $\uparrow$ means the function is monotonically non-decreasing. 
``N/A'' means not applicable or not available. The \colorbox{gray!25}{gray parts} are implications of Theorems~\ref{thm:ALEXR_nsmth} and~\ref{thm:ALEXR}.}
\label{tab:comparison}
\setlength{\tabcolsep}{6pt} 
\renewcommand{\arraystretch}{1.5} 
\centering
\scalebox{0.73}{
\begin{threeparttable}[b]
\centering
\begin{tabular}{ccccccccc}\toprule[0.02in]
\multirow{2}{*}{Method} & \multicolumn{2}{c}{Iteration Complexity} & \multirow{2}{*}{\makecell{Inner Batch\\Size $B$}} & \multirow{2}{*}{\makecell{Outer Batch\\Size $S$}}  & \multirow{2}{*}{Loops} & \multirow{2}{*}{Smoothness} & \multirow{2}{*}{Monotonicity} & \multirow{2}{*}{Convexity$^\dagger$} \\\cline{2-3}
&Strongly Convex & Merely Convex & & & & &\\\midrule
\multirow{2}{*}{\makecell{  BSGD\\\citep{hu2020biased}}} &  \multirow{2}{*}{$O\left(\frac{1}{\mu\epsilon}\right)$}  &  \multirow{2}{*}{$O\left(\frac{1}{\epsilon^2}\right)$} & $O\left(\frac{1}{\epsilon}\right)$   & $O(1)$  & Single  &  $f_i,g_i$& None & $F$\\
&  &   & $O\left(\frac{1}{\epsilon^{2}}\right)$ & $O(1)$ & Single &  $g_i$ & None& $F$\\
\makecell{ SOX-Boost\\\citep{wang2022finite}} & $O\left(\frac{n}{\mu^2 BS\epsilon}\right)$ & $O\left(\frac{n}{BS\epsilon^3}\right)$ & $O(1)$ &  $O(1)$ & Double  &  $f_i,g_i$  & None & $F$
\\
\makecell{ SOX\\\citep{wang2022fccoextended}} & $\tilde{O}\left(\frac{n}{\mu S\epsilon}\right)$ & N/A$^\star$ & $O(1)$ &  $O(1)$ & Single  &  $f_i,g_i$  & $f_i \uparrow$ & $f_i,g_i,r$
\\
\makecell{MSVR\\\citep{jiang2022multi}} & $O\left(\frac{n}{\mu\sqrt{B}\epsilon} \right)$ & $O\left(\frac{n}{\sqrt{B} \epsilon^2}\right)$ & $O(1)$ & $O(1)$ & Double  & $f_i,g_i$  & None & $F$ \\
\midrule
\multirow{4}{*}{\makecell{  {ALEXR}\\(\textbf{This Work})}} & \makecell{$\tilde{O}\left(\max\left\{\frac{1}{\mu S\epsilon},\frac{1}{\mu B\epsilon},\frac{n}{BS\epsilon}\right\} \right)$\\Theorem~\ref{thm:ALEXR_scvx} (i)} &   \cellcolor{gray!25}$O\left(\max\{\frac{1}{S\epsilon^2},\frac{n}{BS \epsilon^2}\}\right)$ & $O(1)$ & $O(1)$  & Single  & $f_i,g_i$  & $f_i \uparrow$ & $f_i,g_i,r$ \\
& \makecell{$\tilde{O}\left(\max\left\{\frac{1}{\mu\epsilon},\frac{n}{BS\epsilon}\right\} \right)$\\Theorem~\ref{thm:ALEXR_scvx} (ii)} &  \cellcolor{gray!25}$O\left(\max\{\frac{1}{\epsilon^2},\frac{n}{BS \epsilon^2}\}\right)$ & $O(1)$ & $O(1)$  & Single  & $f_i$  & $f_i \uparrow$ & $f_i,g_i,r$ \\
& \cellcolor{gray!25}$O\left(\max\{\frac{1}{S\epsilon^2},\frac{n}{B S\epsilon^2}\}\right)^\sharp$ & \makecell{$O\left(\max\{\frac{1}{S\epsilon^2},\frac{n}{B S\epsilon^2}\}\right)$\\Theorem~\ref{thm:ALEXR}} & $O(1)$ & $O(1)$  & Single  & $g_i$  & $f_i \uparrow$ & $f_i,g_i,r$ \\
& \cellcolor{gray!25} $O\left(\max\{\frac{1}{\epsilon^2},\frac{n}{BS \epsilon^2}\}\right)$$^\sharp$ & \makecell{$O\left(\max\{\frac{1}{\epsilon^2},\frac{n}{BS\epsilon^2}\}\right)$\\Theorem~\ref{thm:ALEXR_nsmth}} & $O(1)$ & $O(1)$  & Single  & None  & $f_i \uparrow$ & $f_i,g_i,r$ \\
\bottomrule[0.02in]
\end{tabular}
\begin{tablenotes}
\item [$\dagger$] {\small The sufficient condition (convexity of $f_i,g_i,r$ and monotonicity of $f_i$) for the convexity of $F$ can be met in the applications of interest described in Section~\ref{sec:exps} and Appendix~\ref{sec:other_apps}.
}

\item [$\star$] {\small The analysis of the merely convex case in \citet{wang2022fccoextended} is under a weaker convergence measure that cannot be converted to the objective gap.
}

\item [$\sharp$] {\small As shown in our lower complexity bound in Section~\ref{sec:lower}, strong convexity does not yield a faster rate due to the compositional structure when the outer function $f_i$ is non-smooth. In~\citet{zhang2020optimal}, a similar result has been established for convex stochastic compositional optimization.
}

\end{tablenotes}
\end{threeparttable}
}
\end{table*}

\section{Related Work}\label{sec:related}

The cFCCO problem in~\eqref{eq:primal} is related to several well-studied optimization problems. In Appendix~\ref{sec:other_related}, we discuss the literature related to the min-max reformulation in~\eqref{eq:pd}.

\textbf{Convex Stochastic Compositional Optimization.} Several papers have studied the convex stochastic compositional optimization (SCO) problem $\min_{x\in \X} F(x):=\E_\xi[f_{\xi}(\E_\zeta[g_{\zeta}(x)])]$, where $\xi$ and $\zeta$ are mutually independent. When $F$ is merely convex and $f_{\xi}$ is smooth, the SCGD algorithm in~\citet{wang2017stochastic} requires an $O(\frac{1}{\epsilon^4})$ iterations to find an $x_{\text{out}}$ s.t. $\E[F(x_{\text{out}}) - \min_x F(x)]\leq \epsilon$. When $F$ is $\mu$-strongly convex with unique minimizer $x_*$, SCGD requires $O(\frac{1}{\mu^2\epsilon^{1.5}})$ iterations to make $\frac{\mu}{2}\E\Norm{x_{\text{out}}-x_*}_2^2\leq \epsilon$. Further exploiting the smoothness of $g_\zeta$, \citet{wang2017accelerating} proposed ASC-PG, which improves the convergence rate to $O(\frac{1}{\epsilon^{3.5}})$ for merely convex SCO and $O(\frac{1}{\mu\epsilon^{1.25}})$ for strongly convex SCO. When $f$ is convex and monotonically non-decreasing and $g$ is convex, \citet{zhang2020optimal} reformulated the convex SCO problem as a min-max-max problem and proposed the stochastic sequential dual (SSD) method to obtain the optimal $O(\frac{1}{\epsilon^2})$ rate in the merely convex case and $O(\frac{1}{\mu \epsilon})$ rate in the $\mu$-strongly convex and smooth case. They also showed that the $O(\frac{1}{\epsilon^2})$ rate is optimal when $f_i$ is non-smooth, even if $F$ is strongly convex.
However, these algorithms for SCO are inapplicable to cFCCO.  
In fact, FCCO introduces challenges beyond those in SCO: both the inner function $g_i$ and the distribution $\P_i$ in FCCO depend on the outer index $i$, whereas in SCO $\xi$ and $\zeta$ are mutually independent and the inner function $\E_\zeta[g_{\zeta}(x)]$ does not depend on $\xi$. 

\textbf{Conditional stochastic optimization and FCCO.} \citet{hu2020biased} studied a more general class of problems called conditional stochastic optimization: $\min_{x\in \X} F(x):=\E_\xi[f_{\xi}(\E_{\zeta|\xi}[g_{\zeta}(x; \xi)])]$. They proposed biased SGD (BSGD) with large batch sizes.  For convex and smooth $F$, BSGD   requires $O(\frac{1}{\epsilon^2})$ iterations and a large batch size of $B = O(\frac{1}{\epsilon})$ per iteration to find an $\epsilon$-accurate solution. For a $\mu$-strongly convex $F$, BSGD requires $O(\frac{1}{\mu\epsilon})$ iterations and a large batch size of  $B = O(\frac{1}{\epsilon})$ per iteration to find an $\epsilon$-accurate solution. For the FCCO problem with convex $F$, \citet{wang2022finite} used the moving-average estimator and the restarting trick to find an $\epsilon$-accurate solution with only $O(1)$ batch size per iteration. In particular, restarted SOX has an iteration complexity of $O(\frac{n}{\mu^2BS\epsilon})$ for the $\mu$-strongly convex problem and $O(\frac{n}{BS\epsilon^3})$ for the merely convex problem, where $S$ is the outer batch size and $B$ is the inner batch size. \citet{jiang2022multi} proposed a variance-reduced algorithm MSVR that has improved iteration complexities $O(\frac{n}{S\sqrt{B}\mu\epsilon})$ for the $\mu$-strongly convex problem and $O(\frac{n}{S\sqrt{B}\epsilon^2})$ for the convex problem. However, the theoretical guarantees of MSVR have several limitations such as poor dependence on the batch size $B$,  reliance on restrictive assumptions, and suboptimality for the $\mu$-strongly cFCCO problem with a smooth outer function $f_i$ (as we will demonstrate in Section~\ref{sec:lower}). 

\textbf{Applications.} FCCO serves as the algorithmic framework for optimizing a broad range of risk functions coined as empirical X-risk minimization~\cite{yang2022algorithmic}. It has been applied to many machine learning problems, including optimizing listwise losses for learning to rank~\citep{qiu2022large},  optimizing partial area under the ROC curve (pAUC) for imbalanced data classification~\citep{DBLP:conf/icml/ZhuLWWY22}, group DRO~\cite{DBLP:conf/nips/HuZY23} and optimizing global contrastive losses for self-supervised learning~\citep{yuan2022provable,DBLP:conf/icml/QiuHYZ0Y23}. The proposed algorithm ALEXR for cFCCO is applicable to pAUC maximization and group DRO. 


\section{Algorithm and Convergence Analysis}\label{sec:alg}

\textbf{Notations.} For a vector $y\in\R^{nm}$, we use $y^{(i)}\in\R^{m}$ to represent the $i$-th coordinate (block) of $y$, i.e., $y = (y^{(1)},\dotsc, y^{(n)})^\top$. 
We denote the Bregman divergence associated with $\psi_i:\R^{m}\rightarrow\R$ for any $u,v\in\R^{m}$ as $U_{\psi_i}(u,v)= \psi_i(u)- \psi_i(v) - \partial \psi_i(v)^{\top}(u - v) $ and define $U_\psi(y_1,y_2)\coloneqq \sum_{i=1}^n U_{\psi_i}(y_1^{(i)},y_2^{(i)})$ for $y_1,y_2\in\R^{nm}$. For a function $g_i(x)=\E_{\zeta_i\sim \mathbb{P}_i}[g(x;\zeta_i)]$, we define the stochastic estimator based on the mini-batch $\B_i$ as $g_i(x;\B_i) \coloneqq \frac{1}{|\B_i|}\sum_{\zeta_i\in \B_i} g_i(x;\zeta_i)$. Let $\X$ be a normed vector space with $\|\cdot\|_2$. For each $i\in[n]$, let $\Y_i \subset \R^m$ be a normed vector space with a general norm $\Norm{\cdot}$ and $\Norm{\cdot}_*$ be its dual norm. See Table~\ref{tab:notation} in the appendix for the full list of notations.

We make the following assumptions throughout the paper. 
\begin{assumption}\label{asm:domain}
The domain $\X\subseteq \R^d$ in \eqref{eq:primal} is a convex and closed set. Besides, the regularizatoin term $r$ is proper, lower-semicontinuous, and $\mu$-convex on $\X$, $\mu\geq 0$.
\end{assumption}
\begin{assumption}\label{asm:inner}
$g_i$ is convex. Besides, there exists $C_g>0$ such that $\Norm{g_i(x)- g_i(x')}_*\leq C_g \Norm{x-x'}_2$, $\forall x,x'\in\X$. 
\end{assumption}
\begin{assumption}\label{asm:outer}
$f_i:\R^m\rightarrow \R$ is convex. Besides, there exists $C_f>0$ such that $|f_i(u) - f_i(u')|\leq C_f\Norm{u-u'}_*$, $\forall u,u'\in\Y_i^*$. If $g_i$ is nonlinear, we assume that $f_i$ is monotonically non-decreasing w.r.t. each coordinate of its input. 
\end{assumption}
Assumption~\ref{asm:outer} implies that $\|y^{(i)}\|\leq C_f$  for all $y^{(i)}\in\Y_i$ and $\Y_i \subseteq \R_+^m$,  $\forall i \in [n]$. 
Thus, \eqref{eq:primal} is equivalent to the convex-concave problem~\eqref{eq:pd} with a convex and compact $\Y = \Y_1\times \dotsc \times \Y_n$. Note that $f_i$ of all applications in Section~\ref{sec:exps} and Appendix~\ref{sec:other_apps} satisfy the assumption above. 

Although the smoothness of $f_i$ and $g_i$ are not necessary in our work, incorporating them leads to better convergence rates. We say that $g_i:\X\rightarrow \R^m$ is $L_g$-smooth if it is differentiable and there exists $L_g>0$ such that $\Norm{g_i(x_1) - g_i(x_2) - \nabla g_i(x_2)(x_1-x_2)}_*\leq \frac{L_g}{2}\Norm{x_1-x_2}_2^2$, $\forall x_1,x_2\in\X$; Besides, we say that $f_i:\R^m\rightarrow\R$ is $L_f$-smooth if it is differentiable and there exists $L_f>0$ such that $|f_i(u_1) - f_i(u_2) - \inner{\nabla f_i(u_2)}{u_1-u_2}|\leq \frac{L_f}{2}\Norm{u_1-u_2}_*^2$, $\forall u_1,u_2\in\Y_i^*$. 

Lastly, we assume that the variances of the zeroth-order and first-order stochastic oracles are bounded.

\begin{assumption}\label{asm:var}
There exists finite $\sigma_0^2,\sigma_1^2,\delta^2$ such that 

\vspace{-0.35in}
\begin{align*}
&\E_{\zeta_i}\Norm{g_i(x) - g_i(x;\zeta_i)}_*^2 \leq \sigma_0^2, \\
& \E_{\zeta_i}\|[g_i'(x)]^\top - [g_i'(x;\zeta_i)]^\top\|_{\mathrm{op}}^2 \leq \sigma_1^2,\\
& \frac{1}{n}\sum_{j=1}^n\|[g'_{j}(x)]^\top y^{(j)} - \frac{1}{n}\sum_{i=1}^n [g'_i(x)]^\top y^{(i)}\|_2^2 \leq \delta^2,
\end{align*}
\vspace{-0.275in}

for any $x\in\X$, $g_i'(x)\in \partial g_i(x)$, and $y\in \Y$.
\end{assumption}
\vspace{-0.1in}

Under Assumptions~\ref{asm:inner},~\ref{asm:outer}, the existence of $\delta^2$ is ensured since $\frac{1}{n}\sum_{j=1}^n\|[g'_{j}(x)]^\top y^{(j)} - \frac{1}{n}\sum_{i=1}^n [g'_i(x)]^\top y^{(i)}\|_2^2 \leq C_f^2 C_g^2$.

\subsection{A Primal-Dual Block-Coordinate Algorithm}\label{sec:alg_design}

We propose a stochastic algorithm, ALEXR (refer to Algorithm~\ref{alg:ALEXR}), to efficiently solve the cFCCO problem defined in~\eqref{eq:primal} by leveraging its reformulation in~\eqref{eq:pd}. Due to the structure of~\eqref{eq:primal}, ALEXR begins each iteration by sampling a mini-batch $\S_t$ of size $S$ from $\{1,\dotsc,n\}$ and, for each $i\in\S_t$, sampling two i.i.d. mini-batches $\B_t^i,\tilde{\B}_t^i$ of size $B$ from $\mathbb{P}_i$. 

\begin{algorithm}[tb]
   \caption{ALEXR}
   \label{alg:ALEXR}
\begin{algorithmic}[1]
   \STATE {\bfseries Initialize:} $x_0\in\X\subseteq\R^d$, $y_0 \in \Y\subset\R_+^n$
   \FOR{$t=0,1,\dotsc,T-1$}
   \STATE Sample a batch $\S_t\subset \{1,\dotsc,n\}$, $|\S_t| = S$ 
   \FOR{each $i\in\S_t$} 
\STATE Sample batches $\B_t^{(i)}$, $\tilde{\B}_t^{(i)}$ of size-$B$ from $\mathbb{P}_i$ \label{lst:line:dual_start}
\STATE Compute stochastic estimator $\tilde{g}_t^{(i)} = g_i(x_t;\B_t^{(i)}) + \theta(g_i(x_t;\B_t^{(i)}) - g_i(x_{t-1};\B_t^{(i)}))$\label{lst:line:dual_extrap}
\STATE Update the $i$-th block of the dual variable $y_{t+1}^{(i)} = \argmax_{v\in\Y_i}\{v \tilde{g}_t^{(i)}  - f_i^*(v) - \tau U_{\psi_i}(v,y_t^{(i)})\}$ \label{eq:dual_update}
\ENDFOR
\STATE For each $i\notin \S_t$, $y_{t+1}^{(i)} = y_t^{(i)}$ \label{lst:line:dual_end}
\STATE Compute the stochastic gradient estimator $G_t =\frac{1}{S}\sum_{i\in\S_t} [g'_i (x_t;\tilde{\B}_t^{(i)})]^\top y_{t+1}^{(i)}$ based on the stochastic partial gradient $g'_i (x_t;\tilde{\B}_t^{(i)})\in \partial g_i (x_t;\tilde{\B}_t^{(i)})$  \label{lst:line:primal_start}
\STATE $x_{t+1} = \argmin_{x\in\X} \{\inner{G_t}{x} + r(x) + \frac{\eta}{2} \Norm{x-x_t}_2^2\}$\label{lst:line:primal_end}
   \ENDFOR
\end{algorithmic}
\end{algorithm}

Since stochastic oracles $g_i(x;\zeta_i)$ are only available for those blocks $i\in\S_t$, ALEXR employs a block-coordinate stochastic update for the dual variable $y$, which occurs between line~\ref{lst:line:dual_start} and line~\ref{lst:line:dual_end} in Algorithm~\ref{alg:ALEXR}. For a sampled block $i\in\S_t$, the update of $y^{(i)}$ is based on the extrapolated stochastic gradient estimator $\tilde {g}_t^{(i)}$ of the linear coupling term $y^{(i)}g_i(x_t)$ in Step 6 with $\theta\in[0,1]$, and a mirror-prox mapping w.r.t. to some strongly convex distance-generating function $\psi_i$. To ensure the proximal mapping in line~\ref{eq:dual_update} of Algorithm~\ref{alg:ALEXR} can be efficiently computed, it is crucial to carefully select $\psi_i$:

$\bullet$ For any smooth outer function $f_i$, we can select $\psi_i = f_i^*$. For $u_t^{(i)}  \in \partial f_i^*(y_t^{(i)})$, we can show that (see Lemma~\ref{lemma:equiv} in Appendix~\ref{sec:lemmas}):
\begin{align}\label{eqn:u}
    y_{t+1}^{(i)} &= \nabla f_i(u_{t+1}^{(i)}),\quad u_{t+1}^{(i)}=  \frac{\tau u_t^{(i)} + \tilde {g}_t^{(i)}}{1+\tau}, \forall i\in\S_t
\end{align}
If $f_i$ is a Legendre-type (proper, closed, strictly convex, and essentially smooth) function, ALEXR has a primal-only implementation similar to SOX~\citep{wang2022finite} and MSVR~\citep{jiang2022multi}. To be specific, we can derive the following equivalent update of $u$ sequence such that $y_t^{(i)}=\nabla f_i(u_t^{(i)})$. 
\begin{align}\label{eqn:u-alexr}
u_{t+1}^{(i)} =\left\{ \begin{array}{ll}\frac{1}{1+\tau}u_{t}^{(i)} + \frac{\tau}{1+\tau} \tilde g_t^{(i)},& \text{ if } i\in\mathcal S_t\\ u_{t}^{(i)} & \text{o.w.}\end{array}\right..
\end{align}

$\bullet$ For a non-smooth outer function $f_i$, we can choose $\psi_i$ to be the quadratic function $\psi_i(\cdot)=\frac{1}{2}\Norm{\cdot}_2^2$. This choice requires that the proximal mapping of $f_i^*$ can be efficiently computed, a condition that holds for many simple functions~(See Chapters 4 and 6 in~\citealp{beck2017first}), e.g., the non-smooth function $f_i(\cdot) = \frac{1}{\alpha}\max\{\cdot,0\}$ in the GDRO problem with Conditional Value at Risk (CVaR) divergence and the pAUC maximization problem described in Section~\ref{sec:exps}.

Then, ALEXR updates the primal variable $x$ based on a stochastic proximal gradient descent update, where $G_t$ is a (sub)gradient estimator of the coupling term $\frac{1}{n}\sum_i y_{t+1}^{(i)} g_i(x_t)$  using an independent mini-batch $\tilde{\B}_t^{(i)}$. 

\subsection{Relation to Existing Algorithms}

\textbf{Relation to SOX}~\citep{wang2022finite}. By setting $\theta=0$, $\psi_i= f_i^*$ in ALEXR, the dual update and the gradient estimator become similar to that used in SOX.  In particular, the update of $u_{t+1}^{(i)}$ in~\eqref{eqn:u} becomes the moving average estimator, i.e., $u_{t+1}^{(i)}=  (1-\gamma) u_t^{(i)} + \gamma  g_i(x_t;\B_t^{(i)})$, where $\gamma =\frac{1}{1+\tau}$. Hence, the updates of ALEXR with $\theta=0$, $\psi_i= f_i^*$ reduces to SOX without gradient momentum, whose convergence is analyzed in~\citet{wang2022fccoextended} for strongly convex FCCO. However, establishing its convergence guarantee for the merely convex problem is still an open problem.

\textbf{Relation to MSVR}~\citep{jiang2022multi}. By setting $\psi_i=f_i^*$, there is only a subtle difference between MSVR and ALEXR, which gives ALEXR an advantage. In particular, the update of $u_{t+1}^{(i)}$ in~\eqref{eqn:u} of ALEXR can be written as $u_{t+1}^{(i)}= (1-\gamma) u_t^{(i)} + \gamma   g_i(x_t;\B_t^{(i)}) + \gamma \theta(g_i(x_t;\B_t^{(i)}) - g_i(x_{t-1};\B_t^{(i)}))$ with $\gamma = \frac{1}{1+\tau}<1$, $\forall i\in\S_t$. This estimator is similar to the one used in MSVR except that the scaling factor before the correction term $(g_i(x_t;\B_t^{(i)}) - g_i(x_{t-1};\B_t^{(i)}))$ in MSVR is $\beta = \frac{n-S}{S(1-\gamma)} + 1-\gamma$, which could be much larger than $1$. Notably, several existing works have reported better empirical performance using a $\gamma$ less than one~\citep{jiang2022multi,DBLP:conf/icml/HuQG0Y23,DBLP:conf/nips/HuZY23}, which is consistent with our setting and theory. Another difference between ALEXR and MSVR is that ALEXR does not use the variance-reduction technique~\cite{cutkosky2019momentum} to compute the gradient estimator of the primal variable, which demands more memory and computational costs, albeit resulting in a worse oracle complexity compared to that of ALEXR for cFCCO.

\textbf{Relation to SAPD}~\citep{zhang2021robust}. When $n=1$, the reformulation in~\eqref{eq:pd} is a convex-concave saddle point problem, where SAPD is a representative stochastic algorithm. Both SAPD and ALEXR use an extrapolated estimator to update the dual variable $y$. The key difference is that SAPD updates the entire $y$ without assuming block separability of the dual domain, whereas ALEXR leverages this property to update only $y^{(i)}$ for those sampled blocks $i\in\S_t$. This design addresses the challenge of cFCCO: when $n$ is large, sampling from $\mathbb{P}_i$ and computing stochastic gradient estimator for each $i\in[n]$ is computationally expensive. Although ALEXR can be viewed as a block-coordinate variant of SAPD, its convergence analysis introduces several new challenges that are not present in the analysis of SAPD: (i) The block-coordinate updates of ALEXR lead to new challenges in convergence analysis, such as the dependence issue discussed in Section~\ref{sec:ca}; (ii) ALEXR provides more flexibility to choose distance-generating functions $\psi_i$ other than the quadratic one in SAPD for the dual step; (iii) ALEXR and its convergence guarantees also apply to non-smooth problems, whereas SAPD focuses on smooth problems.

\subsection{Convergence Analysis}\label{sec:ca}


For the convenience of analyzing the block-coordinate updates of the dual variable $y$, we define an auxiliary sequence: 

\begin{align}\label{eq:auxiliary_seq}
& \bar{y}_{t+1}^{(i)} = \argmax_{v\in\Y_i}\left\{v\tilde{g}_t^{(i)} - f_i^*(v) - \tau U_{\psi_i}(v,y_t^{(i)})\right\},
\end{align}
where $\tilde{g}_t = (\tilde{g}_t^{(1)},\dotsc, \tilde{g}_t^{(n)})^\top$, $\forall i\in[n]$. Note that only  $\tilde{g}_t^{(i)}$ for those blocks $i\in\S_t$ are computed in the $t$-th iteration of Algorithm~\ref{alg:ALEXR} while $\tilde{g}_t^{(i)}$ for those $i\notin\S_t$ are \emph{virtual}. The reason we introduce the sequence $\{\bar{y}_t\}_{t\geq 0}$ is to decouple the dependence between $y_{t+1}$ and $\S_t$. Besides, for the options of $\psi_i$ listed in Section~\ref{sec:alg_design}, we have $U_{f_i^*}(u,v)\geq \rho U_{\psi_i}(u,v)$ for some $\rho\geq 0$ and any $u,v\in\Y_i$: For example, $\rho = 1$ for a smooth $f_i$ and $\psi_i = f_i^*$ while $\rho = 0$ for a non-smooth $f_i$. 

We define the objective function in \eqref{eq:pd} to be $L(x,y)\coloneqq \frac{1}{n}\sum_{i=1}^n[g_i(x)^\top y^{(i)} - f_i^*(y^{(i)})] + r(x)$. After the $t$-th iteration of ALEXR, for any $x\in\X,y\in\Y$ we can obtain
\begin{align}\label{eq:telescoping_brief}
& \frac{\eta+\mu}{2}\Norm{x - x_{t+1}}_2^2  + {\setlength{\fboxsep}{0pt}\colorbox{gray!40}{$\frac{\tau+ \rho}{n} U_\psi(y,\bar{y}_{t+1})$}} \leq  \\\nonumber
& \frac{\eta}{2}\Norm{x - x_t}_2^2 + {\setlength{\fboxsep}{0pt}\colorbox{gray!40}{$\frac{\tau}{n} U_\psi(y,y_t)$}} - (L(x_{t+1},y) - L(x,\bar{y}_{t+1}))\\
& + R_t\notag
\end{align}
where $R_t$ captures the remaining terms in~(\ref{eq:starter_de}).  Notably, the term $L(x_{t+1},y) - L(x,\bar{y}_{t+1})$ can be converted into the objective gap $F(x_{t+1}) - F(x)$ in an ergodic sense. In a standard convergence analysis based on potential functions~\citep{bansal2019potential}, all terms in the potential function are expected to be non-expansive after a single iteration. However, it is not immediately clear whether the {\setlength{\fboxsep}{0pt}\colorbox{gray!40}{shaded part}} in~\eqref{eq:telescoping_brief} is non-expansive, regardless of whether we choose $U_\psi(y,y_t)$ or $U_\psi(y,\bar{y}_t)$ as part of the potential function. It is worth noting that the issue above does not arise in the analysis of min-max optimization algorithms without block-coordinate updates such as that in \citet{zhang2021robust}, because $\bar{y}_{t+1} = y_{t+1}$ if the whole $y$ is updated.

\subsubsection{Smooth and Strongly Convex Case}\label{sec:scvx}

When the outer function $f_i$ is smooth, we show that ALEXR can achieve the fast $O(\epsilon^{-1})$ rate for a strongly convex cFCCO problem. Under this setting, the min-max problem in~\eqref{eq:pd} is strongly-convex-strongly-concave (SCSC) and a unique saddle point $(x_*,y_*)$  exists for the unique minimizer $x_*$ of the original problem in~\eqref{eq:primal}. We define that $\G_t$ is the $\sigma$-algebra generated by $\{\B_0,\S_0,\dotsc,\B_{t-1},\S_{t-1},\B_t\}$ and $\F_t$ is the $\sigma$-algebra generated by $\{\B_0,\S_0,\dotsc,\B_{t-1},\S_{t-1},\B_t,\S_t\}$. Note that $\G_t\subset \F_t$ and $y_{t+1}$ is $\F_t$-measurable. Since $x_*,y_*$ are independent of the randomness in the algorithm, we have
\begin{align*}
\E[U_\psi(y_*, y_{t+1})\mid \G_t] = \frac{S}{n} U_\psi(y_*, \bar{y}_{t+1}) + \frac{n-S}{n} U_\psi(y_*, y_t).
\end{align*}
Plug the equation above and $x=x_*,y=y_*$ into \eqref{eq:telescoping_brief} can establish the contraction needed for potential-function-based convergence analysis. This leads to the following results, which holds for ALEXR with any strongly convex $\psi_i$, including $\psi_i(\cdot) = \frac{1}{2}\|\cdot\|_2^2$ and $\psi_i = f_i^*$ for a smooth $f_i$.

\begin{theorem}\label{thm:ALEXR_scvx}
Suppose that  Assumptions~\ref{asm:domain}, \ref{asm:inner}, \ref{asm:outer}, \ref{asm:var} hold with $\mu>0$ and $L_f$-smooth outer function $f_i$.
\begin{itemize}
\item (i) If $g_i$ is smooth, ALEXR with $\eta = \frac{\mu\theta}{1-\theta}$, $\tau = \frac{S}{n(1-\theta)}$, and $\theta = 1 - O(\epsilon)$ makes $\frac{\mu}{2}\E \Norm{x_T - x_*}_2^2 \leq \epsilon$ in $\tilde{O}(\max\{\frac{1}{\mu S}, \frac{1}{\mu B}, \frac{n}{BS}\}\epsilon^{-1})$ iterations; 
\item (ii) If $g_i$ is non-smooth, ALEXR with the same setting of $\eta, \tau$, and $\theta = 1-O(\epsilon)$ leads to iteration complexity $\tilde{O}(\max\{\frac{1}{\mu}, \frac{n}{BS}\}\epsilon^{-1})$. 
\end{itemize}
\end{theorem}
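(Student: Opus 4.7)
The plan is to establish a one-step contraction of a carefully chosen Lyapunov function from \eqref{eq:telescoping_brief}, convert the block-coordinate dual update on the auxiliary sequence $\bar{y}_{t+1}$ into one on the actual iterate $y_{t+1}$ via conditional expectation, and then iterate the recursion to obtain a geometric rate. Under the $L_f$-smoothness of $f_i$, I would choose $\psi_i = f_i^*$ so that $\rho = 1$ and $f_i^*$ is $\tfrac{1}{L_f}$-strongly convex; consequently \eqref{eq:pd} is strongly-convex-strongly-concave with a unique saddle point $(x_*,y_*)$.

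First, I would instantiate \eqref{eq:telescoping_brief} at $(x,y) = (x_*,y_*)$, using the saddle-point property $L(x_{t+1},y_*) - L(x_*,\bar{y}_{t+1}) \geq 0$ (with a quadratic lower bound from strong convexity and strong concavity when it is needed to tighten the primal/dual coefficients). Then I would take conditional expectation given $\G_t$ and apply the identity displayed just above the theorem,
\begin{equation*}
\E[U_\psi(y_*, y_{t+1}) \mid \G_t] = \tfrac{S}{n}\, U_\psi(y_*, \bar{y}_{t+1}) + \tfrac{n-S}{n}\, U_\psi(y_*, y_t),
\end{equation*}
to rewrite $U_\psi(y_*, \bar{y}_{t+1})$ in terms of $\E[U_\psi(y_*, y_{t+1}) \mid \G_t]$ and $U_\psi(y_*, y_t)$. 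I would absorb the cross-terms produced by the extrapolated estimator $\tilde{g}_t^{(i)}$ by augmenting the Lyapunov function with lagged corrections proportional to $\|x_t - x_{t-1}\|_2^2$ and $\tfrac{1}{n}U_\psi(y_t, y_{t-1})$, writing the full potential as
\begin{equation*}
\Phi_t = \tfrac{\eta}{2}\|x_t - x_*\|_2^2 + \tfrac{\tau}{n} U_\psi(y_*, y_t) + \text{lagged correction}.
\end{equation*}

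Next, I would tune $\eta$, $\tau$, and $\theta$ so that the in-expectation recursion becomes $\E[\Phi_{t+1}] \leq \theta\,\E[\Phi_t] + N_t$, where $N_t$ collects variance contributions scaling like $\tfrac{\sigma_0^2 + \sigma_1^2}{B}$ and $\tfrac{\delta^2 + C_f^2 C_g^2}{S}$. The stated choices $\eta = \tfrac{\mu\theta}{1-\theta}$ and $\tau = \tfrac{S}{n(1-\theta)}$ make $\tfrac{\eta}{\eta + \mu} = \theta$ in the primal and, after folding the $\tfrac{n-S}{n}$ factor from block sampling into the dual piece, produce the same contraction rate $\theta$ in the dual. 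Iterating yields $\E[\Phi_T] \leq \theta^T \Phi_0 + \tfrac{\max_t N_t}{1-\theta}$; choosing $\theta = 1 - O(\epsilon)$ gives $\theta^T \Phi_0 = O(\epsilon)$ once $T = \tilde{O}(1/\epsilon)$, and balancing the residual $\tfrac{N_t}{1-\theta}$ against $\epsilon$ produces the stated complexity. For case (i), smoothness of $g_i$ lets me bound the inner-oracle variance near $x_t$ by a combination of $L_g^2\|x_t - x_*\|_2^2$ (absorbed into $\Phi_t$) and a remainder divided by $B$, which yields the $\tfrac{1}{\mu B}$ factor; in case (ii) this coupling is unavailable, so that factor degrades to $\tfrac{1}{\mu}$.

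The main technical obstacle is the one explicitly flagged around \eqref{eq:telescoping_brief}: the shaded $U_\psi$ terms are not obviously non-expansive under the block-coordinate update, and handling them requires the auxiliary $\bar{y}_{t+1}$ together with the conditioning identity above to convert partial updates into contractions of the full dual divergence in expectation. A subordinate difficulty is controlling the extrapolation residuals in $R_t$: since $\tilde{g}_t^{(i)}$ couples $g_i(x_t;\B_t^{(i)})$ with $g_i(x_{t-1};\B_t^{(i)})$, the resulting cross-terms do not telescope without the lagged Lyapunov correction, and the specific coupling $\eta = \tfrac{\mu\theta}{1-\theta}$, $\tau = \tfrac{S}{n(1-\theta)}$ is exactly what lets these corrections close cleanly.
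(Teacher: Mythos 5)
Your proposal follows essentially the same route as the paper's proof: instantiate the one-step inequality at the saddle point $(x_*,y_*)$, drop the nonnegative gap $L(x_{t+1},y_*)-L(x_*,\bar y_{t+1})$, use the conditional-expectation identity to trade $U_\psi(y_*,\bar y_{t+1})$ for $U_\psi(y_*,y_{t+1})$, absorb the extrapolation cross-terms with a lagged correction (the paper uses $\Gamma_t=\frac{1}{n}\sum_i (g_i(x_t)-g_i(x_{t-1}))^\top(y_*^{(i)}-y_t^{(i)})$, telescoping as $\Gamma_{t+1}-\theta\Gamma_t$, rather than your $U_\psi(y_t,y_{t-1})$ term, but this is the same device), and tune $\eta,\tau,\theta$ so the whole potential contracts at rate $\theta$ before iterating. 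One imprecision worth flagging: the $\frac{1}{\mu B}$ term in case (i) does not come from coupling the inner-oracle variance to $L_g^2\|x_t-x_*\|_2^2$; it is simply the first-order oracle variance $\frac{C_f^2\sigma_1^2}{\mu B\epsilon}$, present in both cases. The actual role of $g_i$'s smoothness is to bound the linearization error $g_i(x_{t+1})-g_i(x_t)-\nabla g_i(x_t)(x_{t+1}-x_t)$ by $\frac{L_g}{2}\|x_{t+1}-x_t\|_2^2$, which is absorbed by the negative $-\frac{\eta}{2}\|x_{t+1}-x_t\|_2^2$ term; when $g_i$ is non-smooth this error is only $O(C_g\|x_{t+1}-x_t\|_2)$, leaving after Young's inequality a residual $O\bigl(\frac{C_f^2C_g^2}{\eta+\mu}\bigr)$ per iteration that is independent of $B$ and $S$ and therefore dominates, producing the degraded $\frac{1}{\mu}\epsilon^{-1}$ term in case (ii).
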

\begin{remark}
On strongly convex cFCCO with smooth $f_i$ and $g_i$, ALEXR achieves $\tilde{O}(\max\{\frac{1}{\mu S\epsilon}, \frac{1}{\mu B \epsilon}, \frac{n}{BS \epsilon}\})$ iteration complexity, which improves upon the previously best-known $O(\frac{n}{\mu \sqrt{B} S\epsilon})$ achieved by MSVR~\citep{jiang2022multi}. Besides, we also provide the oracle complexity of ALEXR when the inner function $g_i$ is non-smooth, which is absent in previous work.
\end{remark}

\subsubsection{Convex Case with Possibly Non-smooth $f_i$}\label{sec:merely_cvx}

Now we shift our focus to the cFCCO problem with possibly non-smooth outer function $f_i$. In this case, we require $\psi_i = \frac{1}{2}\|\cdot\|_2^2$. 

To derive a bound of the objective gap $\E[F(\bar{x}_T) - F(x_*)]$, $\bar{x}_T = \frac{1}{T}\sum_{t=0}^{T-1} x_t$, we will plug $x=x_*$ and $y=\tilde{y}_T^{(i)} \in \argmax_{v\in \Y_i}\{v^\top g_i(\bar{x}_T) - f_i^*(v)\}\in{\partial f_i(g_i(\bar{x}_T))}$ into \eqref{eq:telescoping_brief}. Unfortunately, the technique outlined in Section~\ref{sec:scvx} does not address this issue because $\tilde{y}_T$ also depends on $\S_t$. We address this issue by introducing multiple virtual sequences to transform the shaded part in \eqref{eq:telescoping_brief} into telescoping sums of several potential functions of these virtual sequences (See Lemma~\ref{lem:dual}), a technique we extended from~\citet{nemirovski2009robust,juditsky2011solving,alacaoglu2022complexity}. 


When $g_i$ is non-smooth, ALEXR achieves the same convergence rate for $\theta\in\{0,1\}$, but choosing $\theta=0$ saves $S$ inner function evaluations at $x_{t-1}$.
\begin{theorem}\label{thm:ALEXR_nsmth}
Suppose Assumptions~\ref{asm:domain}, \ref{asm:inner}, \ref{asm:outer}, \ref{asm:var} hold and $g_i$ is non-smooth. ALEXR with $\psi_i = \frac{1}{2}\|\cdot\|_2^2$, $\theta=0$, $\eta = O(\frac{1}{\epsilon})$, and $\tau = O(\frac{1}{B\epsilon})$ can make $\E [F(\bar{x}_T) - F(x_*)] \leq \epsilon$  in $O(\max\{1,\frac{\Omega_\Y^0}{BS}\}\epsilon^{-2})$ iterations, where $\Omega_\Y^0  \coloneqq \E[U_\psi(\tilde{y}_T, y_0)] = \sum_{i=1}^n\E[U_{\psi_i}(\tilde{y}_T^{(i)}, y_0^{(i)})]$.
\end{theorem}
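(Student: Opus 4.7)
The starting point is the one-step descent inequality \eqref{eq:telescoping_brief} specialized to $\mu = 0$, $\rho = 0$, $\psi_i = \tfrac{1}{2}\|\cdot\|_2^2$, and $\theta = 0$. I would first derive it rigorously by combining optimality of the primal and dual proximal updates with convexity of $L(\cdot, y)$ and concavity of $L(x, \cdot)$, keeping the remainder $R_t$ explicit as the stochastic noise from replacing $g_i(x_t)$ by $g_i(x_t; \B_t^{(i)})$ in the dual step and $[\partial g_i(x_t)]^\top y_{t+1}^{(i)}$ by $G_t$ in the primal step. Summing over $t = 0, \ldots, T-1$, the primal term $\tfrac{\eta}{2}\|x - x_t\|_2^2$ telescopes directly, but the dual sum $\sum_t \tfrac{\tau}{n}[U_\psi(y, y_t) - U_\psi(y, \bar{y}_{t+1})]$ does not, because block-coordinate updates give $y_{t+1} = \bar{y}_{t+1}$ only on the sampled blocks $i \in \S_t$.

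To resolve this mismatch whenever $y$ is $\G_t$-measurable, I would invoke the quadratic identity
\begin{align*}
\E[\|y - y_{t+1}\|_2^2 \mid \G_t] = \tfrac{S}{n}\|y - \bar{y}_{t+1}\|_2^2 + \tfrac{n-S}{n}\|y - y_t\|_2^2,
\end{align*}
which after rearrangement and total expectation converts the offending sum into $\tfrac{\tau}{S}[U_\psi(y, y_0) - \E\, U_\psi(y, y_T)]$; the emergent factor $n/S$ is precisely what produces the $1/(BS)$ scaling in the final iteration complexity. The main obstacle is that to recover the objective gap one must eventually take $y = \tilde{y}_T \in \argmax_y L(\bar{x}_T, y)$, which depends on every sampled $\S_0, \ldots, \S_{T-1}$ and therefore fails the measurability hypothesis above. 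To bypass this, I would follow the multiple-virtual-sequence technique of \citet{nemirovski2009robust, juditsky2011solving, alacaoglu2022complexity}: introduce an auxiliary sequence $\{\hat{y}_t\}$ adapted to $\{\G_t\}$ that mirrors the dual prox-updates driven by a generic $y \in \Y$, and use a Fenchel-style variational argument on the concave-in-$y$ right-hand side so that the final bound carries $\E[U_\psi(\tilde{y}_T, y_0)] = \Omega_\Y^0$ rather than the cruder $\sup_{y \in \Y} U_\psi(y, y_0)$.

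Once $y = \tilde{y}_T$ is in place, I would convert $\tfrac{1}{T}\sum_t \E[L(x_{t+1}, \tilde{y}_T) - L(x_*, \bar{y}_{t+1})]$ into $\E[F(\bar{x}_T) - F(x_*)]$ via two standard steps: Jensen's inequality with convexity of $L(\cdot, \tilde{y}_T)$ averages the $x_{t+1}$'s into $\bar{x}_T$ (up to an $O(1/T)$ boundary correction for the index shift), while $L(x_*, y) \le F(x_*)$ for every $y \in \Y$ converts $L(x_*, \bar{y}_{t+1})$ into $F(x_*)$ at no cost. The residuals $\sum_t \E[R_t]$ are then bounded on the order of $T\sigma_0^2/(B\tau) + T\delta^2/\eta$ using Assumption~\ref{asm:var}, after Young's inequality absorbs primal-dual cross terms into the $\eta$ and $\tau$ budgets. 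Assembling everything yields
\begin{align*}
\E[F(\bar{x}_T) - F(x_*)] \lesssim \frac{\eta \|x_0 - x_*\|_2^2}{T} + \frac{\tau\,\Omega_\Y^0}{ST} + \frac{\sigma_0^2}{B\tau} + \frac{\delta^2}{\eta},
\end{align*}
and setting $\eta = \Theta(1/\epsilon)$, $\tau = \Theta(1/(B\epsilon))$ balances all four terms to $O(\epsilon)$ precisely when $T \ge C \max\{1, \Omega_\Y^0/(BS)\}\epsilon^{-2}$, as claimed.
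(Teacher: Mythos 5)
Your proposal is correct and follows essentially the same route as the paper: the one-step inequality \eqref{eq:starter_de}, the conditional-expectation identity over the sampled blocks combined with the virtual-sequence device of \citet{nemirovski2009robust,juditsky2011solving,alacaoglu2022complexity} (the paper's Lemmas~\ref{lem:mds_ip} and~\ref{lem:dual}) to handle the non-telescoping dual term and the dependence of $\tilde{y}_T$ on all sampled batches, the Fenchel--Young conversion of the restricted gap into $\E[F(\bar{x}_T)-F(x_*)]$, and the same balancing of $\eta=\Theta(1/\epsilon)$, $\tau=\Theta(1/(B\epsilon))$. The only cosmetic difference is that the paper organizes the $\theta=0$ case as a short modification of the $\theta=1$ proof (replacing Lemma~\ref{lem:diamond_de} with a simpler direct bound on $\langle g_i(x_{t+1})-\tilde g_t^{(i)},\,y^{(i)}-\bar y_{t+1}^{(i)}\rangle$ via Lipschitzness of $g_i$ and Young's inequality), which your sketch subsumes.
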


When $g_i$ is smooth, setting the parameter $\theta=1$ leverages the extrapolation term and yields a better convergence rate.

\begin{theorem}\label{thm:ALEXR}
Suppose Assumptions~\ref{asm:domain}, \ref{asm:inner}, \ref{asm:outer}, \ref{asm:var} hold and $g_i$ is smooth.  ALEXR with  $\psi_i = \frac{1}{2}\|\cdot\|_2^2$, $\theta=1$, $\eta = O(\frac{1}{\epsilon})$, and $\tau = O(\frac{1}{B\epsilon})$ can make $\E [F(\bar{x}_T) - F(x_*)] \leq \epsilon$ in $O(\max\{\frac{1}{S},\frac{\Omega_\Y^0}{BS}\}\epsilon^{-2})$ iterations, where $\Omega_\Y^0  \coloneqq \E[U_\psi(\tilde{y}_T, y_0)] = \sum_{i=1}^n\E[U_{\psi_i}(\tilde{y}_T^{(i)}, y_0^{(i)})]$.
\end{theorem}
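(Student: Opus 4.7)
The plan is to extend the primal-dual framework of Section~\ref{sec:ca} to the merely convex, smooth-$g_i$ setting with $\theta = 1$. I would start from the single-step inequality~(\ref{eq:telescoping_brief}) instantiated with $\psi_i = \tfrac{1}{2}\|\cdot\|_2^2$ and $\rho = 0$ (since $f_i$ may be non-smooth), sum over $t = 0, \ldots, T-1$, and substitute $x = x_*$ together with $y = \tilde{y}_T$ where $\tilde{y}_T^{(i)} \in \argmax_{v\in\Y_i}\{v^\top g_i(\bar{x}_T) - f_i^*(v)\}$. Convex duality together with Jensen's inequality applied through $\bar{x}_T$ then converts the averaged Lagrangian gap into a lower bound on $F(\bar{x}_T) - F(x_*)$ up to telescopic and noise terms.

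Because $\tilde{y}_T$ depends on the entire sampling history $\{\S_t\}_{t<T}$, the shaded terms in~(\ref{eq:telescoping_brief}) cannot be handled by taking conditional expectations in $\S_t$ alone. I would therefore invoke the multi-virtual-sequence construction of Lemma~\ref{lem:dual} (extended from Nemirovski--Juditsky and Alacaoglu--Malitsky), which rewrites $\frac{\tau}{n}[U_\psi(y,\bar{y}_{t+1}) - U_\psi(y, y_t)]$ as a clean telescoping sum plus auxiliary quantities whose expectations either vanish or are bounded by $\frac{\tau}{n}U_\psi(\tilde{y}_T, y_0)$, producing exactly the $\frac{\tau}{n}\Omega_\Y^0$ contribution in the final bound.

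The main work lies in controlling the residual $R_t$ under $\theta = 1$. Using $\tilde{g}_t^{(i)} = 2 g_i(x_t;\B_t^{(i)}) - g_i(x_{t-1};\B_t^{(i)})$, the dual-gap cross term $\tfrac{1}{n}\sum_i \langle y^{(i)} - y_{t+1}^{(i)},\, g_i(x_{t+1}) - \tilde{g}_t^{(i)}\rangle$ decomposes into a deterministic extrapolation error $g_i(x_{t+1}) - 2 g_i(x_t) + g_i(x_{t-1})$ and mean-zero mini-batch noise contributions from $\B_t^{(i)}, \B_{t-1}^{(i)}$. By $L_g$-smoothness and Young's inequality, the former is bounded by $\tfrac{L_g}{2}(\|x_{t+1}-x_t\|_2^2 + \|x_t - x_{t-1}\|_2^2)$ and absorbed into adjacent $\tfrac{\eta}{2}\|x_{t+1}-x_t\|_2^2$ proximal terms; this is the crucial distinction from the non-smooth-$g_i$ analysis of Theorem~\ref{thm:ALEXR_nsmth}. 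The stochastic pieces have conditional second moments $O(\sigma_0^2/B)$ and are absorbed by the dual proximal term $\tfrac{\tau}{n}\|y-y_t\|_2^2$ after choosing $\tau = \Theta(1/(B\epsilon))$. In parallel, the primal-gradient estimator $G_t$ splits into outer-sampling noise of variance $O(\delta^2/S)$ and inner-batch noise of variance $O(\sigma_1^2 C_f^2/B)$, both absorbed into the primal proximal term under $\eta = \Theta(1/\epsilon)$.

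Assembling the pieces yields a bound of the form $\E[F(\bar{x}_T) - F(x_*)] \lesssim \tfrac{1}{T}\bigl(\eta \|x_0 - x_*\|_2^2 + \tfrac{\tau}{n}\Omega_\Y^0\bigr) + \tfrac{1}{T}\cdot(\text{balanced variance})$; substituting the stated $\eta$ and $\tau$ and accounting for the $S/n$ factor introduced by the block-coordinate dual sampling (each block is touched on average $TS/n$ times, and the outer variance carries a $1/S$ reduction) produces the advertised $T = O(\max\{\tfrac{1}{S}, \tfrac{\Omega_\Y^0}{BS}\}\epsilon^{-2})$. I expect the chief obstacle to be precisely the absorption step described above: to telescope the $L_g$-smoothness bound across consecutive iterations while simultaneously preserving the conditional-independence structure needed for the stochastic pieces and for the virtual-sequence argument. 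Carefully ordering the conditional expectations over $\B_t^{(i)}$, $\B_{t-1}^{(i)}$, and $\S_t$, and verifying that the virtual-sequence comparators remain measurable with respect to the appropriate $\sigma$-algebras, is the delicate part that turns the non-smooth rate of Theorem~\ref{thm:ALEXR_nsmth} into the $1/S$ improvement claimed here.
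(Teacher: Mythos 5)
Your overall skeleton matches the paper's: the one-step primal--dual inequality, Lemma~\ref{lem:dual}'s virtual-sequence treatment of the block-coordinate mismatch between $U_\psi(y,\bar{y}_{t+1})$ and $U_\psi(y,y_{t+1})$, and the final conversion to $F(\bar{x}_T)-F(x_*)$ via Fenchel--Young with $y=\tilde{y}_T$. However, the step you single out as ``the crucial distinction from the non-smooth-$g_i$ analysis'' does not work. You claim that $L_g$-smoothness bounds the deterministic extrapolation error $g_i(x_{t+1})-2g_i(x_t)+g_i(x_{t-1})$ by $\tfrac{L_g}{2}(\|x_{t+1}-x_t\|_2^2+\|x_t-x_{t-1}\|_2^2)$. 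It does not: writing the second difference as $\left[g_i(x_{t+1})-g_i(x_t)\right]-\left[g_i(x_t)-g_i(x_{t-1})\right]$, the first-order parts $\nabla g_i(x_t)(x_{t+1}-x_t)-\nabla g_i(x_{t-1})(x_t-x_{t-1})$ do not cancel (the iterates are not equally spaced), so the error is genuinely first order, of size $O(C_g(\|x_{t+1}-x_t\|_2+\|x_t-x_{t-1}\|_2))$. Pairing that with $\|y^{(i)}-\bar{y}_{t+1}^{(i)}\|\leq 2C_f$ and applying Young's inequality produces a per-iteration constant $O(\lambda C_f^2)$, which forces $\lambda=O(\epsilon)$, hence $\eta=\Omega(C_g^2/\epsilon)$, hence $T=\Omega(\epsilon^{-2})$ with no $1/S$ factor --- i.e.\ your route reproduces exactly the bottleneck of the non-smooth case (Theorem~\ref{thm:ALEXR_nsmth}) rather than the improved $O(\max\{\tfrac{1}{S},\tfrac{\Omega_\Y^0}{BS}\}\epsilon^{-2})$ rate.

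The paper's mechanism is different in two respects. First, in Lemma~\ref{lem:diamond_de} the second difference is not bounded in norm at all: it is rewritten as $\Gamma_{t+1}-\Gamma_t$ with $\Gamma_t=\tfrac{1}{n}\sum_i(g_i(x_t)-g_i(x_{t-1}))^\top(y^{(i)}-y_t^{(i)})$, which telescopes across iterations (only the boundary term $\Gamma_T$ survives), plus residuals in which the first-order increments $g_i(x_{t+1})-g_i(x_t)$ and $g_i(x_{t-1})-g_i(x_t)$ are paired with the \emph{small} differences $y_{t+1}^{(i)}-\bar{y}_{t+1}^{(i)}$ and $y_t^{(i)}-\bar{y}_{t+1}^{(i)}$ rather than with the full dual diameter; these residuals are absorbed by the negative term $-\tfrac{\tau}{n}U_\psi(\bar{y}_{t+1},y_t)$ in~\eqref{eq:starter_de}. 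Notably this step uses only Lipschitz continuity of $g_i$, not smoothness. Second, $L_g$-smoothness is actually consumed elsewhere, in Lemma~\ref{lem:club_de}, where the primal linearization residual $g_i(x_{t+1})-g_i(x_t)-\nabla g_i(x_t)(x_{t+1}-x_t)$ is genuinely second order and yields $\tfrac{L_gC_f}{2}\|x_{t+1}-x_t\|_2^2$ in place of the constant $\tfrac{4C_f^2C_g^2}{\eta+\mu}$ of Lemma~\ref{lem:club_de_nsmth}. A secondary inaccuracy: the batch-noise terms $\langle g_i(x_t)-g_i(x_t;\B_t^{(i)}),\,y^{(i)}\rangle$ are not mean-zero once $y=\tilde{y}_T$ (which depends on the whole history) and are not absorbed by the dual proximal term; the paper controls them with the additional virtual sequences $\hhat{y}_t,\breve{y}_t$ of Lemma~\ref{lem:diamond_de} (via Lemma~\ref{lem:mds_ip}) and controls the pairing with $\bar{y}_{t+1}^{(i)}$ (which depends on $\B_t^{(i)}$) via the noise-free comparator $\dot{y}_{t+1}^{(i)}$ and Lemma~\ref{lem:prox_virtual}. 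Without the $\Gamma_t$ telescoping and this pairing structure, the claimed $1/S$ improvement is not obtained.
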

\begin{remark}
The radius $\Omega_\Y^0$ is $O(n)$ in the worst case, but it can be much smaller than $O(n)$ when $\tilde{y}_T$ and $y_0$ exhibit some sparsity structure (an example is provided in Appendix~\ref{sec:gdro_cvar}).

We compare the above results with that of MSVR. For non-smooth $f_i$, MSVR is not applicable. Theorem~\ref{thm:ALEXR_nsmth} implies that ALEXR can achieve the $O(\max\{\frac{1}{\epsilon^2},\frac{n}{BS\epsilon^2}\})$ iteration complexity for the merely convex problem even $f_i$ is non-smooth. Furthermore, Theorem~\ref{thm:ALEXR} also indicates that when both $f_i$ and $g_i$ are smooth, ALEXR achieves the $O(\max\{\frac{1}{S\epsilon^2},\frac{n}{BS\epsilon^2}\})$ iteration complexity for the merely convex problem, improving upon the $O(\frac{n}{\sqrt{B}S\epsilon^2})$ iteration complexity in \cite{jiang2022multi} of the double-loop algorithm MSVR.  

\end{remark}

When $f_i$ is non-smooth, the strong convexity of the objective does not result in a better rate compared to the merely convex case, as we will demonstrate in Section~\ref{sec:lower}.

\section{Lower Complexity Bounds}\label{sec:lower}

In the previous section, we introduced the ALEXR algorithm and established the upper bounds of its iteration complexity and oracle complexity (i.e., the number of calls of stochastic oracles). In order to examine whether these bounds of ALEXR are (near-)optimal for the problem in~\eqref{eq:primal}, we examine the \emph{lower} bounds by constructing ``hard'' instances of~\ref{eq:primal} for the following abstract first-order update scheme that subsumes ALEXR as well as previous algorithms~\citep{zhang2021robust,wang2022finite,jiang2022multi}\footnote{It covers SOX~\citep{wang2022finite} and MSVR~\citep{jiang2022multi} when $\psi_i = f_i^*$. Besides, it also covers SAPD~\citep{zhang2021robust} when $S = n$ and $\psi_i = \frac{1}{2}\Norm{\cdot}_2^2$.}. 

The abstract scheme starts with the initial spaces $\mathfrak{X}_0 =\mathfrak{G}_0= \{\mathbf{0}_d\}$, $\mathfrak{Y}_0 = \{\mathbf{0}_n\}$, $\mathfrak{g}_0 = \{\mathbf{0}_m\}$ and progresses as follows in the $t$-th iteration: First, it samples a batch $\S_t\subset [n]$ and $\zeta_t^i,\tilde{\zeta}_t^i$ i.i.d. from $\mathbb{P}_i$. For those $i\in\S_t$, 
\begin{align*}
& \mathfrak{g}_{t+1}^{(i)} = \mathfrak{g}_t^{(i)} + \mathrm{span}\{g_i(\hat{x};\zeta_t^{(i)})\mid \hat{x}\in \mathfrak{X}_t\},\\
& \mathfrak{Y}_{t+1}^{(i)} = \mathfrak{Y}_t^{(i)} + \mathrm{span}\{\texttt{MP}(\hat{y}_i,\hat{g}_i)\mid \hat{y}_i\in \mathfrak{Y}_{t}^{(i)}, \hat{g}_i\in \mathfrak{g}_{t+1}^{(i)}\},
\end{align*}
where ``$+$'' refers to the Minkowski sum,  $\mathfrak{g}_t^{(i)},\mathfrak{Y}_{t}^{(i)}$ are the $i$-th slices of the spaces $\mathfrak{g}_t,\mathfrak{Y}_{t}$, and $\texttt{MP}(\hat{y}_i,\hat{g}_i) \coloneqq \argmax_{v}\{v\hat{g}_i - f_i^*(v) - \tau U_{\psi_i}(v,\hat{y}_i) \}$. For those $i\notin \S_t$, the corresponding slices remain unchanged, i.e.,  $\mathfrak{g}_{t+1}^{(i)} = \mathfrak{g}_t^{(i)},\mathfrak{Y}_{t+1}^{(i)} = \mathfrak{Y}_t^{(i)}$. The spaces $\mathfrak{G}_t,\mathfrak{X}_t$ are updated as
\begin{align*}
& \mathfrak{G}_{t+1} = \mathfrak{G}_t + \mathrm{span}\{G(\hat{x},\hat{y})\mid \hat{x}\in \mathfrak{X}_t,\hat{y}\in \mathfrak{Y}_{t+1}\},\\
& \mathfrak{X}_{t+1} = \mathfrak{X}_t + \mathrm{span}\{\texttt{QP}(\hat{x},\hat{G})\mid \hat{x}\in \mathfrak{X}_t, \hat{G}\in \mathfrak{G}_{t+1}\},
\end{align*}
where we define $G(\hat{x},\hat{y})\coloneqq \frac{1}{S}\sum_{i\in\S_t} [\nabla g_i(\hat{x};\tilde{\zeta}_t^{(i)})]^\top \hat{y}^{(i)}$ and $\texttt{QP}(\hat{x},\hat{G})\coloneqq \argmin_{x}\{x^\top \hat{G} + r(x) + \frac{\eta}{2}\Norm{x-\hat{x}}_2^2\}$. 

For the problem with smooth outer function $f_i$, we construct a hard instance of \eqref{eq:primal} by setting $f_i$ to a variant of the Huber function and inner function $g_i$ to a linear function with some Bernoulli distributed noise. For the problem with non-smooth outer function $f_i$, we construct a hard instance by replacing the smooth Huber function $f_i$ with a monotonically non-decreasing hinge function. Details of the constructions and the proof are provided in Appendix~\ref{sec:lower_proof}.

The construction of the noise and the hinge function $f_i$ for the non-smooth problem is adapted from \citet{zhang2020optimal}. Our contributions are twofold: First, we design an abstract scheme that supports block-coordinate updates and characterize how the optimal oracle complexity depends on the total number of blocks $n$; Second, we also construct a hard instance to prove the lower bound for the strongly convex cFCCO problem with a smooth outer function $f_i$, which highlights the near-optimality of our ALEXR in this setting and its significant improvement over previous algorithms.

\begin{theorem}\label{thm:lower}
For the $\mu$-strongly cFCCO problem in~\eqref{eq:primal} with a smooth outer function $f_i$, any algorithm within the abstract scheme described above requires at least $\Omega(\max\{\frac{S}{\mu}, n\}\epsilon^{-1})$ oracles calls to find an $\bar{x}$ such that $\frac{\mu}{2}\E \Norm{\bar{x} - x_*}_2^2 \leq \epsilon$; Besides, For the cFCCO problem in~\eqref{eq:primal} (whether merely convex or strongly convex) with a non-smooth outer function $f_i$, any algorithm within the abstract scheme described above requires at least $\Omega(n\epsilon^{-2})$ oracles calls to find an $\bar{x}$ such that $\E [F(\bar{x}) - F(x_*)] \leq \epsilon$.
\end{theorem}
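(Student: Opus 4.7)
The plan is to build two explicit hard instances of~\eqref{eq:primal}, one for the strongly convex smooth setting and one for the non-smooth setting, and to combine a Nesterov-type subspace-growth lower bound with a Bernoulli-noise statistical lower bound adapted from \citet{zhang2020optimal}. The new ingredient is accommodating the block-coordinate structure of the abstract scheme.

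First, I would construct the instances. Partition the coordinates of $x$ into $n$ groups and choose $a_i \in \R^d$ supported on two consecutive groups so that $\mathrm{span}\{a_i : i \in I\}$ grows by at most one coordinate per newly-observed block (a bidiagonal ``chain''). Set $g_i(x) = \langle a_i, x\rangle - \xi_i$, where $\xi_i$ is Bernoulli whose expectation encodes the minimizer and whose variance $\sigma^2$ is tuned to the target accuracy. For the smooth strongly convex case take $f_i$ to be a monotone Huber function and $r(x)=\tfrac{\mu}{2}\|x\|_2^2$; for the non-smooth case take $f_i(u)=\max\{u,0\}$. A short verification confirms Assumptions~\ref{asm:domain}--\ref{asm:var}.

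Second, I would control subspace growth under the abstract scheme. Since each $g_i$ is affine with fixed direction $a_i$, each scalar query $g_i(\hat x;\zeta_t^{(i)})$ keeps $\mathfrak{g}_t^{(i)}$ in a line, and both $\texttt{MP}$ on the dual side and the gradient map $G(\hat x,\hat y)$ extend $\mathfrak{X}_t$ only along directions $a_i$ for blocks $i$ already sampled. With the bidiagonal pattern, the reachable primal dimension equals the length of the longest initial run of sampled indices, and a coupon-collector-style argument lower-bounds the number of iterations (hence oracle calls, counting $S$ per iteration) needed to push this frontier through all $n$ blocks. A standard restricted-subspace suboptimality bound then converts this into the $\Omega(n/\epsilon)$ and $\Omega(n/\epsilon^2)$ contributions. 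Independently, a two-point or Le Cam argument on the Bernoulli noise produces the stochastic complement: estimating each coordinate direction to accuracy $\epsilon$ requires $\Omega(\sigma^2/(\mu\epsilon))$ i.i.d.\ samples per block in the smooth strongly convex regime and $\Omega(\sigma^2/\epsilon^2)$ in the non-smooth regime. Aggregating over the $S$ blocks touched per iteration and taking the maximum of the two lower bounds gives the stated $\Omega(\max\{S/\mu, n\}\epsilon^{-1})$ and $\Omega(n\epsilon^{-2})$ bounds.

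The hardest part will be handling the flexibility of the abstract scheme: the proximal maps $\texttt{MP}$ and $\texttt{QP}$ depend on arbitrary choices of $\psi_i$ and $r$, so I must verify that for my instance the updated iterates stay within the intended subspaces regardless of those choices. In particular, the dual slices $\mathfrak{Y}_t^{(i)}$ feed back into $\mathfrak{G}_{t+1}$ via $G(\hat x,\hat y)$, and I need to show this feedback does not accelerate primal subspace growth beyond the coupon-collector rate. Choosing the Huber or hinge $f_i$ so that each $\mathfrak{Y}_t^{(i)}$ stays in the one-dimensional span generated by its scalar inner-function values is the key to decoupling primal and dual subspace dynamics; once this decoupling is in place, the remaining calculations follow the classical stochastic lower bound templates.
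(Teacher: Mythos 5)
There is a genuine gap in how you propose to extract the $\epsilon$-dependence. Your bidiagonal-chain construction with a coupon-collector argument on the ``frontier'' is a deterministic subspace-growth device: advancing the frontier through $n$ blocks when each specific block is hit with probability $S/n$ costs $\Omega(n^2/S)$ iterations (or $\Omega((n/S)\log n)$ if any new block advances it), and neither quantity carries an $\epsilon^{-1}$ or $\epsilon^{-2}$ factor. So the claim that ``a standard restricted-subspace suboptimality bound then converts this into the $\Omega(n/\epsilon)$ and $\Omega(n/\epsilon^2)$ contributions'' does not go through: those factors cannot come from subspace growth, and taking a \emph{maximum} of a block-counting bound and a single-block Le Cam bound would only yield $\Omega(\max\{n,\epsilon^{-2}\})$, which is strictly weaker than the stated $\Omega(n\epsilon^{-2})$. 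The correct combination is multiplicative, and the paper realizes it with a single coupled argument on a \emph{fully separable} instance ($g_i(x)=x^{(i)}+\zeta$, no chain): the Bernoulli noise is tuned so that a sample of block $i$ is ``informative'' only with probability $p=\Theta(\epsilon)$ (smooth case) or $\Theta(\epsilon^2)$ (non-smooth case), block $i$ is sampled with probability $S/n$, and on the complementary event the paper verifies that $\mathfrak{g}_t^{(i)},\mathfrak{Y}_t^{(i)},\mathfrak{X}_t^{(i)}$ all remain $\{0\}$ regardless of $\psi_i$ and $\texttt{MP}/\texttt{QP}$. Hence with $T<n/(4Sp)$ each coordinate stays at $0$ with probability $>1/2$ and contributes $\Omega(\epsilon)$ error, giving $T=\Omega(n/(Sp))$ iterations, i.e.\ $\Omega(n\epsilon^{-1})$ and $\Omega(n\epsilon^{-2})$ oracle calls; the $\Omega(S/(\mu\epsilon))$ term comes separately from reducing to classical strongly convex stochastic lower bounds with $f_i$ the identity.

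Two further points. First, the adaptivity issue you flag is real but your proposed fix (a per-block two-point/Le Cam argument) would still need to handle the fact that the algorithm allocates samples adaptively across blocks and that the dual slices feed back into the primal gradient map; the paper's explicit ``iterates stay at zero on the uninformative event'' argument sidesteps all of this, and your chain construction would make it strictly harder since each dual slice would then couple two primal coordinates. Second, your instinct to keep each $\mathfrak{Y}_t^{(i)}$ one-dimensional is on the right track, but the decoupling you want is exactly what full separability delivers for free; the chain buys you nothing here because the target lower bound scales with the number of summands $n$, not with the primal dimension or a condition number.
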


Theorem~\ref{thm:lower} demonstrates that ALEXR is near-optimal in both cases. Furthermore, it also shows that the upper bounds established in Theorem~\ref{thm:ALEXR_scvx} and Theorem~\ref{thm:ALEXR} are tight.

\section{Experiments}\label{sec:exps}

In this section, we present two main applications of the cFCCO problem: Group Distributionally Robust Optimization (GDRO) and Partial AUC Maximization (pAUC) with a restricted True Positive Rate (TPR). We then evaluate the empirical performance of our proposed ALEXR algorithm against previous baselines in these applications. More applications are discussed in Appendix~\ref{sec:other_apps} while additional details and experimental results can be found in Appendix~\ref{sec:exp_details}.

\subsection{Group Distributionally Robust Optimization}

\begin{figure*}[ht]
	\subfigure{
		\centering
		\includegraphics[width=0.235\linewidth]{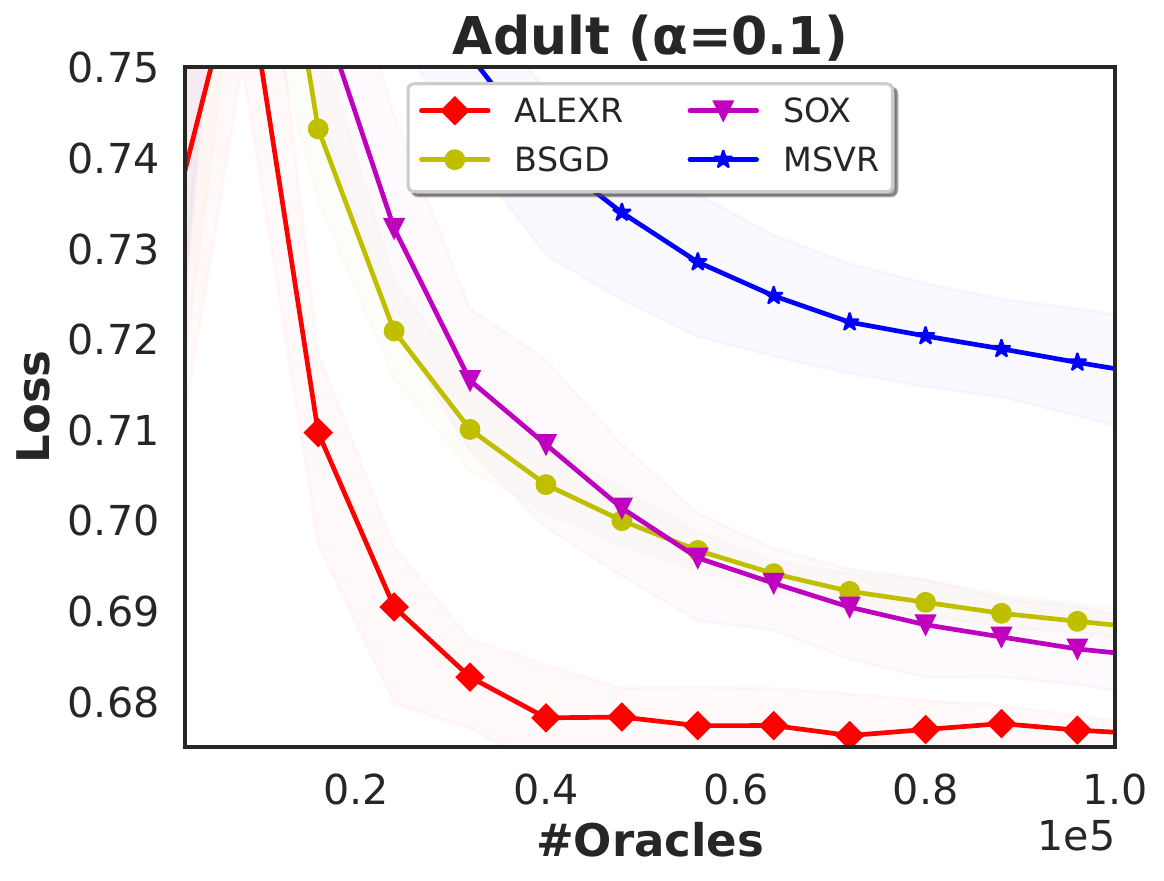}
	}
	\subfigure{	\centering
		\includegraphics[width=0.235\linewidth]{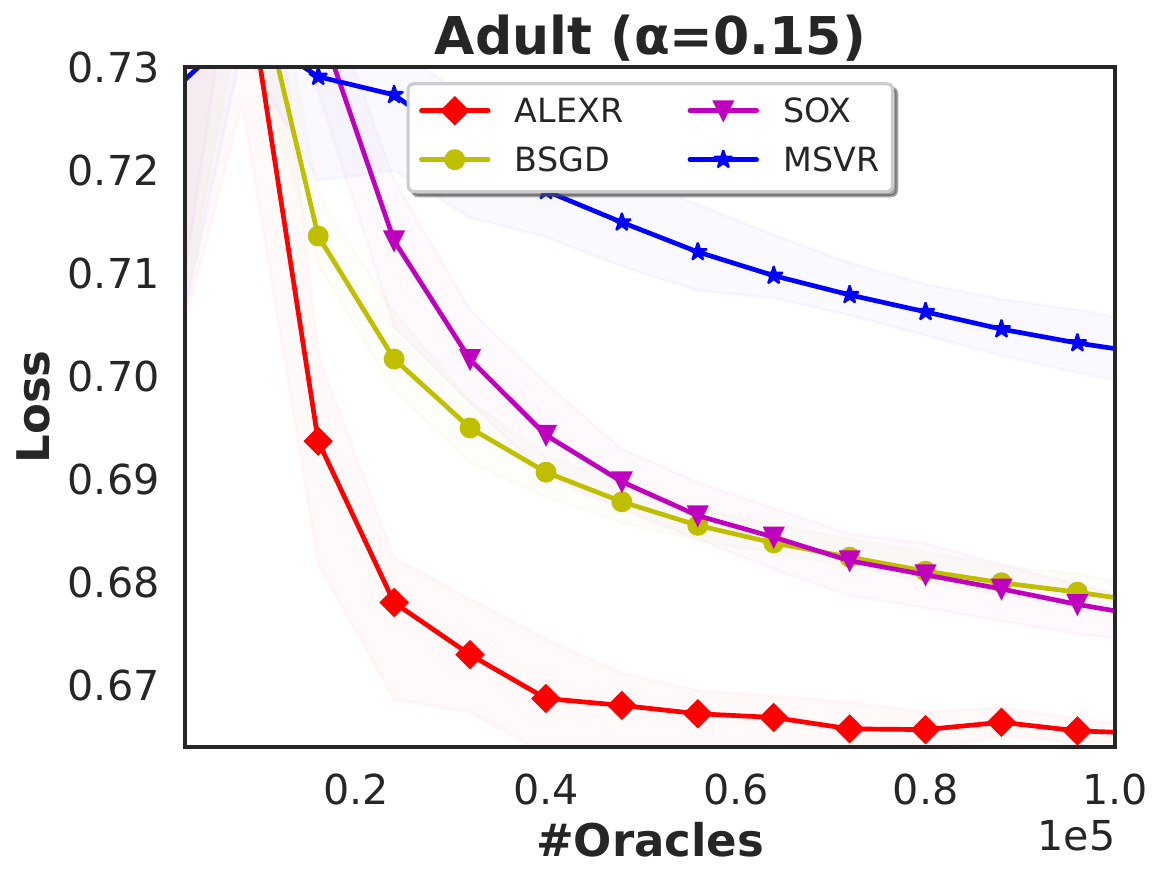}
	}
 	\subfigure{
		\centering
		\includegraphics[width=0.235\linewidth]{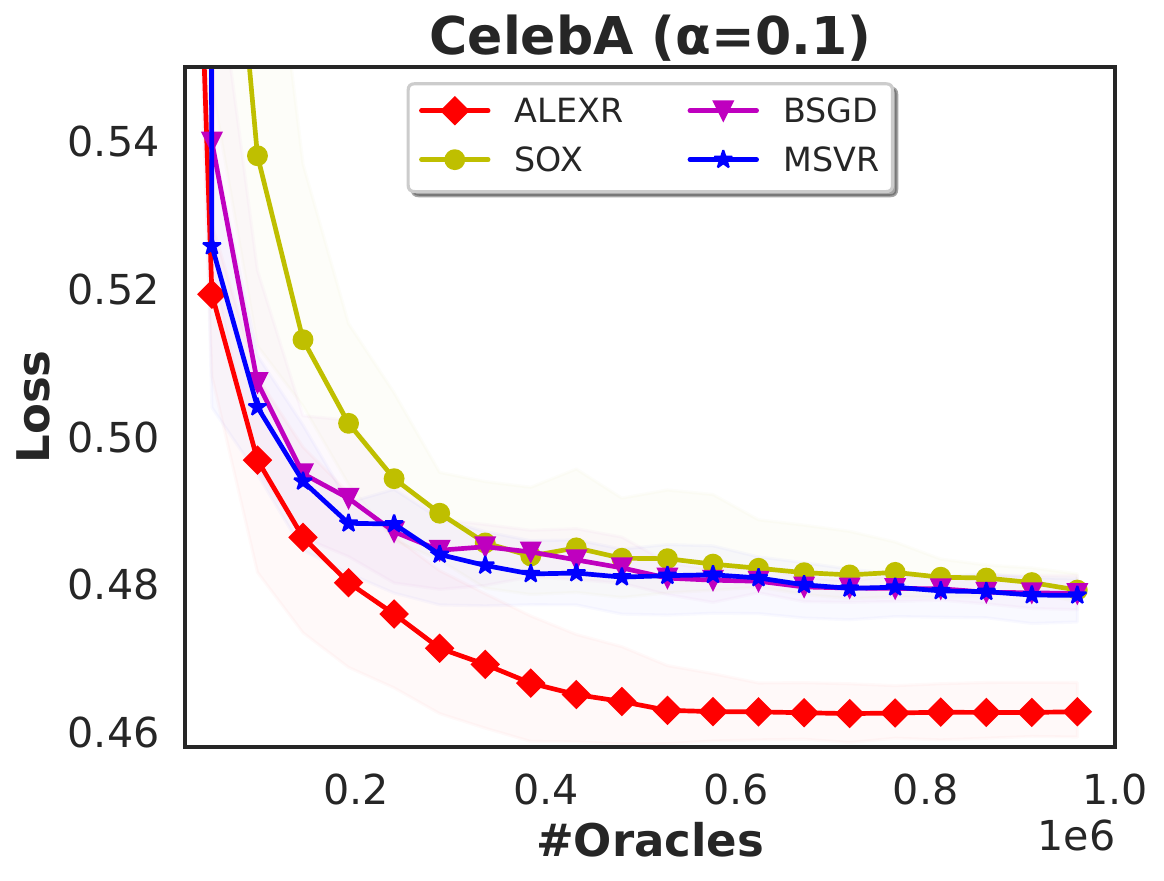}
	}
	\subfigure{	\centering
		\includegraphics[width=0.235\linewidth]{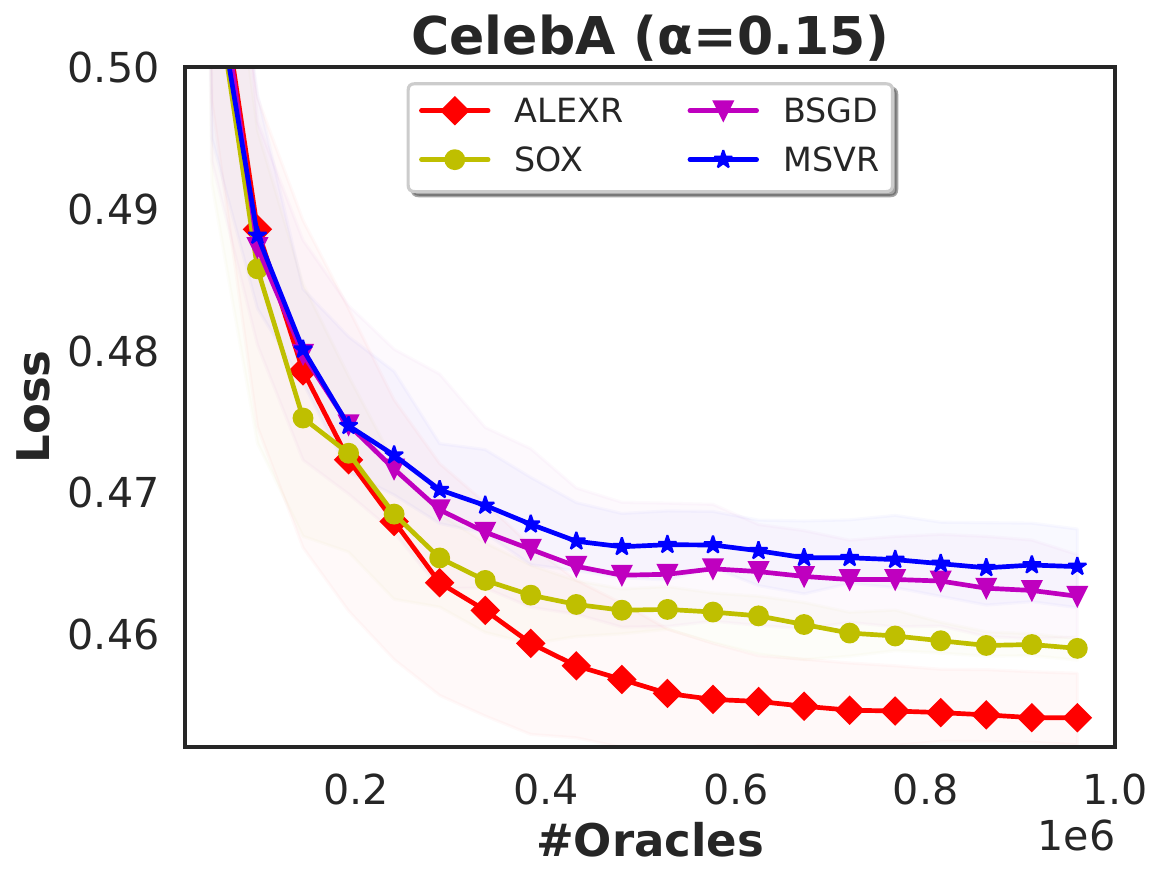}
	}
	\caption{GDRO loss curves evaluated on the validation datasets during the training process with $\alpha=0.1$ and $0.15$.}
	\label{fig:curves}
\end{figure*}
The Group Distributionally Robust Optimization (GDRO) framework aims to train machine learning models that are robust across predefined  subgroups~\citep{sagawa2019distributionally}. Suppose that there are $n$ predefined groups and the data distribution of the $i$-th group is $\mathbb{P}_i$. The $\phi$-divergence penalized GDRO can be formulated as
\begin{align}\label{eq:penalized_gdro}
\min_{w} \max_{q\in\Delta_n} \left\{\sum_{i=1}^n \left(q^{(i)} R_i(w) - \frac{\lambda}{n} \phi(n q_i) \right)\right\} + r(w),
\end{align}
where $\Delta_n$ is the $(n-1)$-dimensional probability simplex, $w$ is the model parameter, $R_i (w)\coloneqq \E_{z\sim \mathbb{P}_i}[\ell(w;z)]$ is the risk of the $i$-th group, and $\phi:\R_+\rightarrow \R\cup \{+\infty\}$, $\phi(1)=0$. 

Prior work~\citep{sagawa2019distributionally,zhang2023stochastic} discarded the $\phi$-divergence penalty, i.e., $\lambda=0$ in \eqref{eq:penalized_gdro}, and consider the problem $\min_{w} \max_{i\in[n]} R_i(w)$, which minimizes the risk of the \emph{worst} group. However, the model trained through worst-group risk minimization may be vacuous if the worst group is an outlier. Moreover, the sizes of groups may follow a long-tailed distribution such that multiple rare groups exist. To resolve these issues, we choose $\lambda>0$ and consider the penalized GDRO problem with CVaR divergence $\phi=\mathbb{I}_{[0,\alpha^{-1}]}$ or $\chi^2$-divergence $\phi(t) = \frac{1}{2}(t-1)^2$.

The challenges of directly solving~(\ref{eq:penalized_gdro}) using stochastic min-max algorithms lie in estimating the stochastic gradient of $q$ and controlling its variance when $n$ is large~\cite{zhang2023stochastic}. Alternatively, we can transform the above problem into an equivalent problem by duality~\citep{levy2020large}:
\begin{align}\label{eq:dual_gdro}
\min_{w,c\in\R}\frac{\lambda}{n}\sum_{i=1}^n \phi^*\left(\frac{R_i(w) - c}{\lambda}\right) + c  + r(w),
\end{align}
where $\phi^*$ is monotonically non-decreasing, e.g., $\phi^*(u) = \frac{1}{\alpha}(u)_+$ for CVaR divergence and $\phi^*(u) = \frac{1}{4}(u+2)_+^2-1$ for $\chi^2$-divergence. The dual formulation in \eqref{eq:dual_gdro} is recognized as a difficult open problem in~\citet{sagawa2019distributionally} due to the biased stochastic estimator (refer to footnote 4 in their paper). When $R_i(w)$ is convex, we can solve the problem in \eqref{eq:dual_gdro} by viewing it as a cFCCO problem with a convex outer function $f_i(\cdot) = \lambda\phi^*(\cdot)$ and an inner function $g_i(x) = (R_i(w) - c)/\lambda$ that is jointly convex to $x=(w,c)$. In Appendix~\ref{sec:gdro_rates}, we compare the convergence rates and per-iteration costs of ALEXR with previous GDRO algorithms. 

\begin{table*}[t]
	\centering 	\caption{Test accuracy (\%) on the worst-$(\alpha n)$ groups with $\alpha=0.1$ and $0.15$. 
    The best accuracy is highlighted in black.}\vspace*{0.1in}
	\label{tab:test}	\scalebox{0.9}{		\begin{tabular}{c|cc|cc|c}
\toprule[.1em]
\multirow{3}{*}{Methods} & \multicolumn{2}{c|}{\textbf{Adult}} &  \multicolumn{2}{c|}{\textbf{CelebA}} & \multirow{2}{*}{Mean} \\
& $\alpha =0.1$ & $\alpha =0.15$  & $\alpha =0.1$ & $\alpha =0.15$ & \\	\midrule
SGD &0.71$\pm$0.20 & 1.87$\pm$0.25 & 2.75$\pm$0.08& 4.89$\pm$0.10  & 2.56\\
SGD-UW & 23.70$\pm$1.01& 26.26$\pm$1.06 & 73.70$\pm$0.13& 74.18$\pm$0.13 & 49.46\\
OOA & 51.46$\pm$2.21 & 54.12$\pm$2.04  &66.40$\pm$6.37 &73.43$\pm$ 0.79 & 61.35\\
BSGD &  55.81$\pm$0.70& \textbf{58.58$\pm$0.61} &75.30$\pm$0.27 &76.16$\pm$0.12 & 66.46\\
SOX &  56.34 $\pm$1.15& 58.36$\pm$0.44 & 75.04$\pm$0.20&76.10$\pm$0.30 & 66.46\\
MSVR & 47.78$\pm$1.06& 49.49$\pm$0.95 &  75.34$\pm$0.28 & 76.17$\pm$0.09 & 62.20\\
ALEXR &\textbf{56.58$\pm$0.69} &  58.52$\pm$0.71 & \textbf{75.79$\pm$0.05} & \textbf{76.29$\pm$0.07}  & {\bf 66.80}\\
\bottomrule
	\end{tabular}}
\end{table*}

\begin{figure*}[ht]
	\subfigure{	\centering
		\includegraphics[width=0.235\linewidth]{./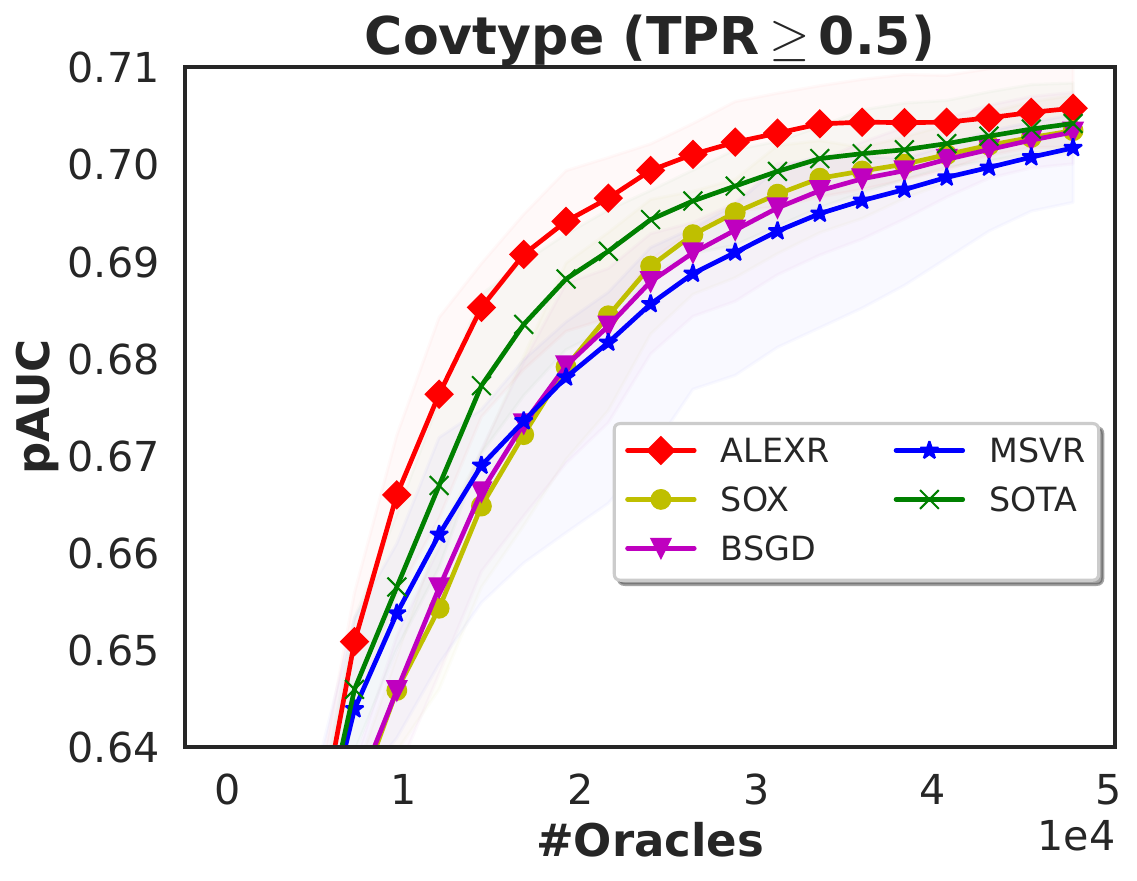}
	}
     \subfigure{	\centering
		\includegraphics[width=0.235\linewidth]{./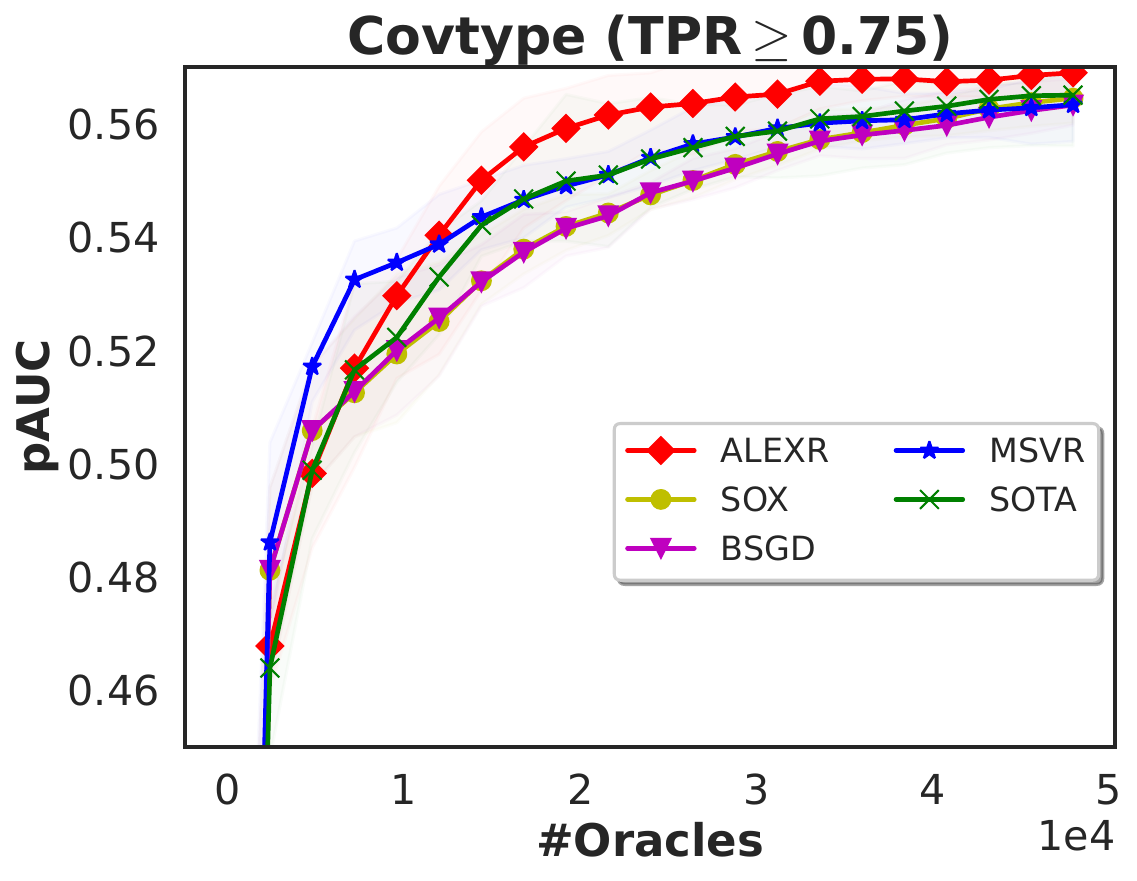}
	}
	\subfigure{	\centering
		\includegraphics[width=0.235\linewidth]{./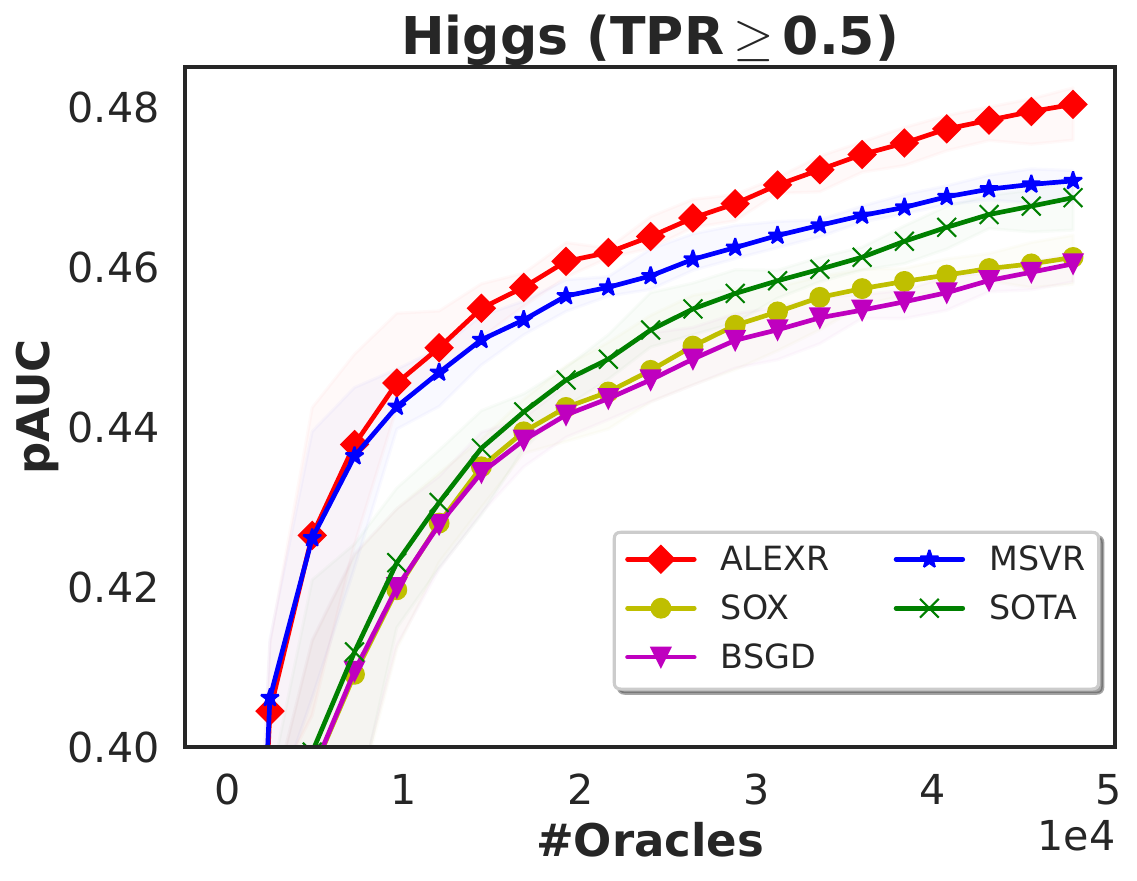}
	}
 	\subfigure{	\centering
		\includegraphics[width=0.235\linewidth]{./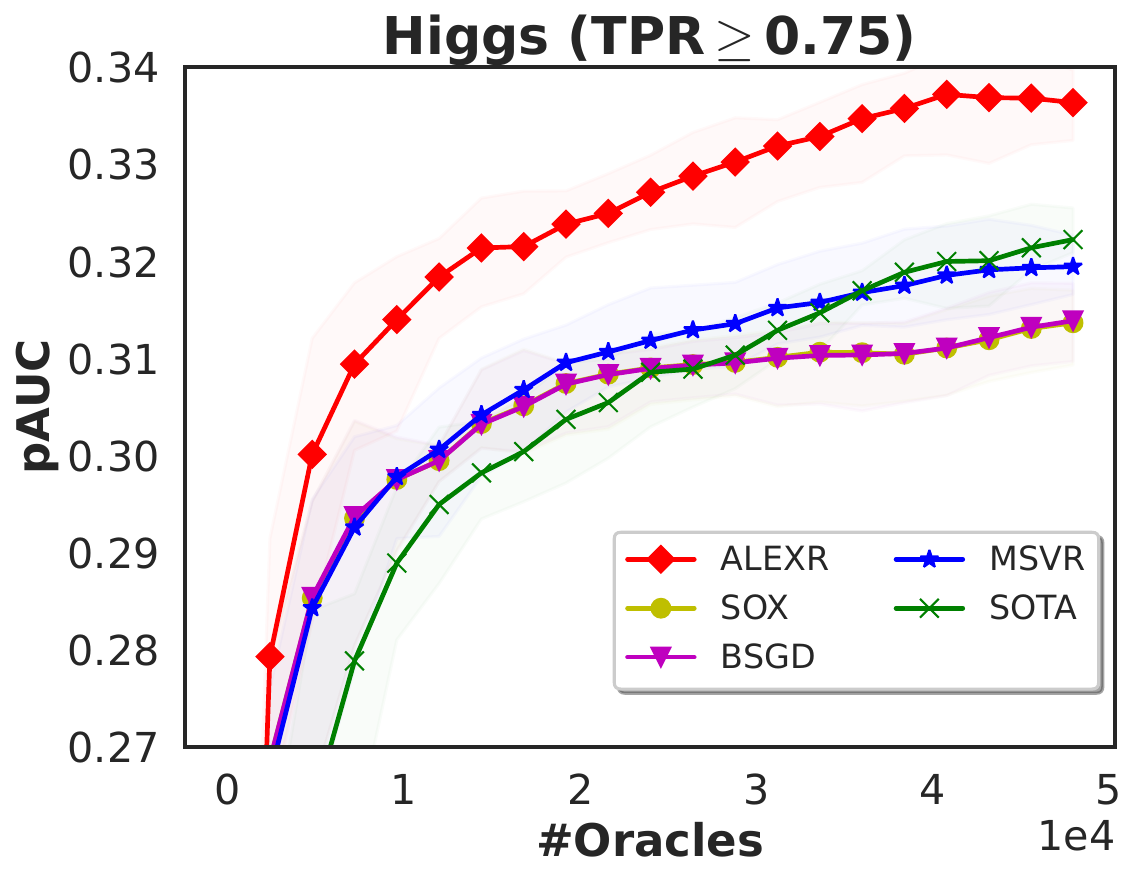}
	}
    \\
    	\subfigure{	\centering
		\includegraphics[width=0.235\linewidth]{./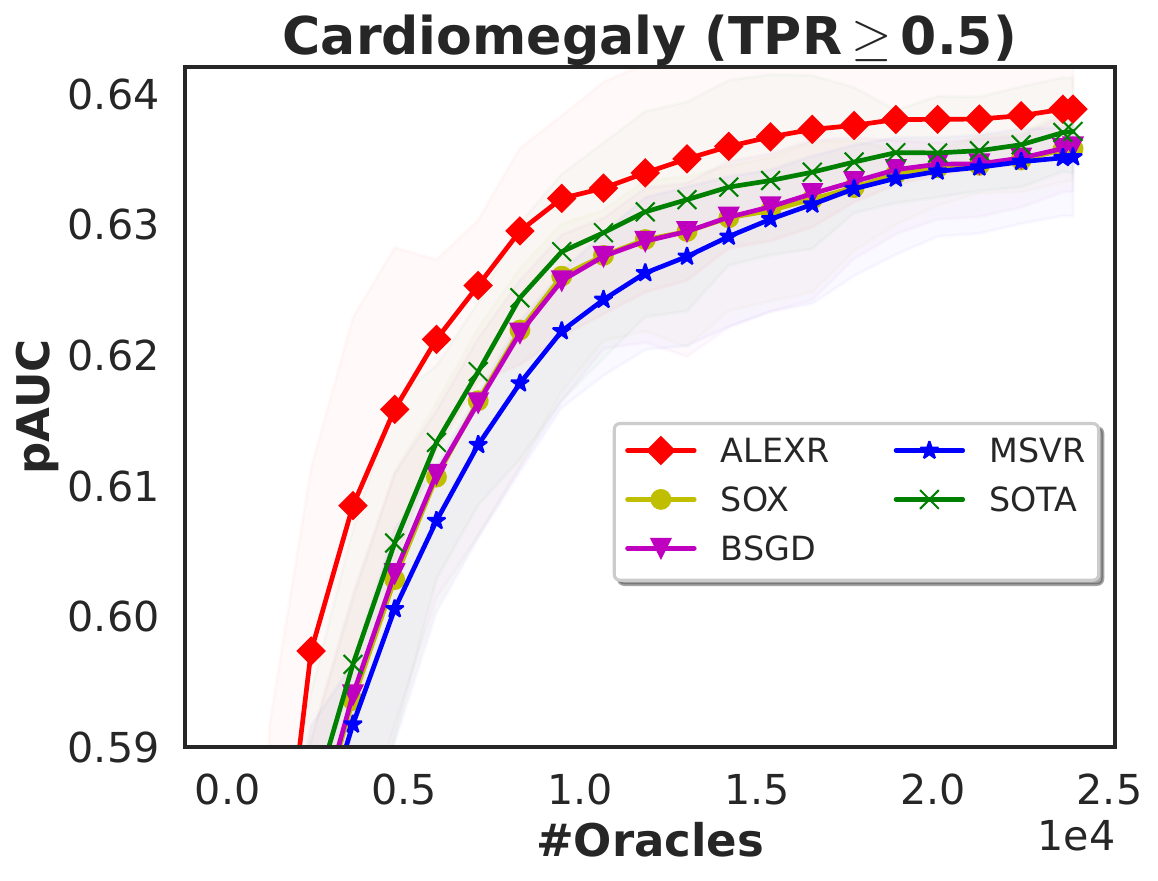}
	}
     \subfigure{	\centering
		\includegraphics[width=0.235\linewidth]{./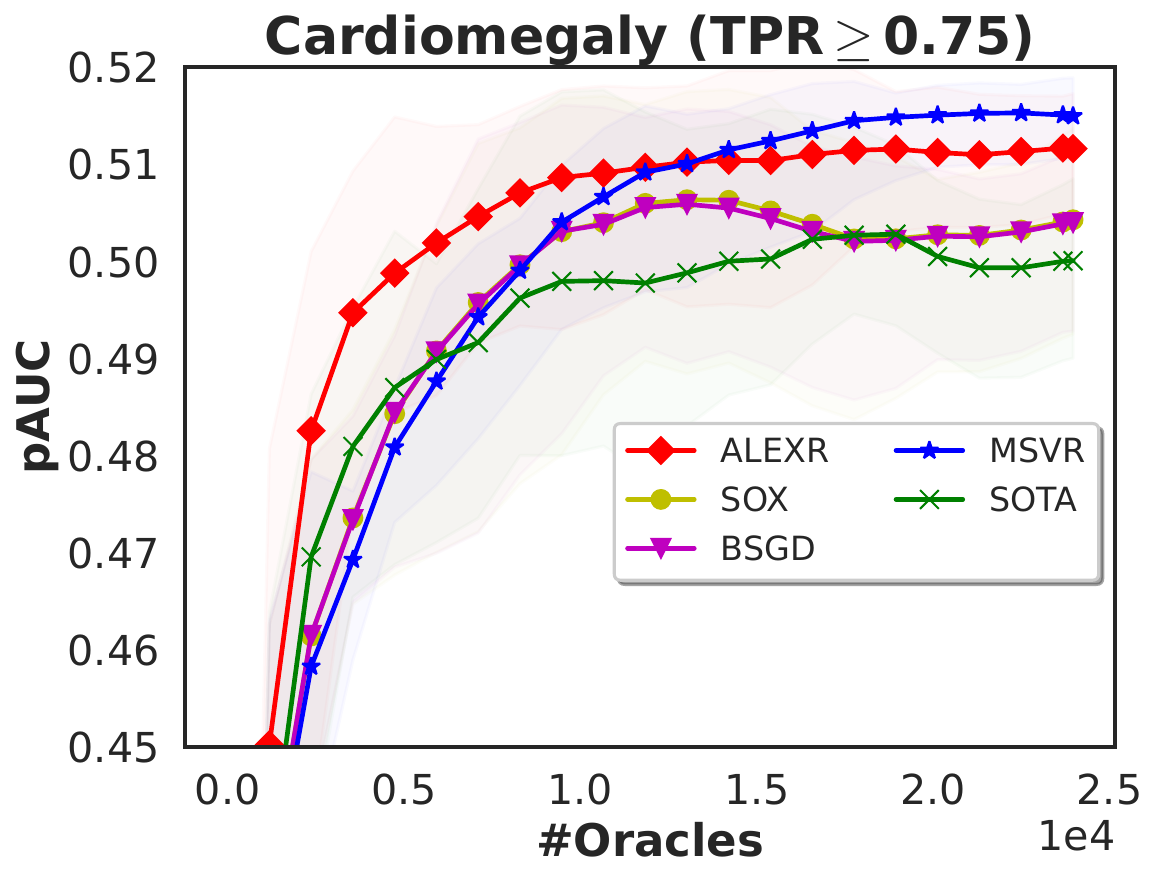}
	}
	\subfigure{	\centering
		\includegraphics[width=0.235\linewidth]{./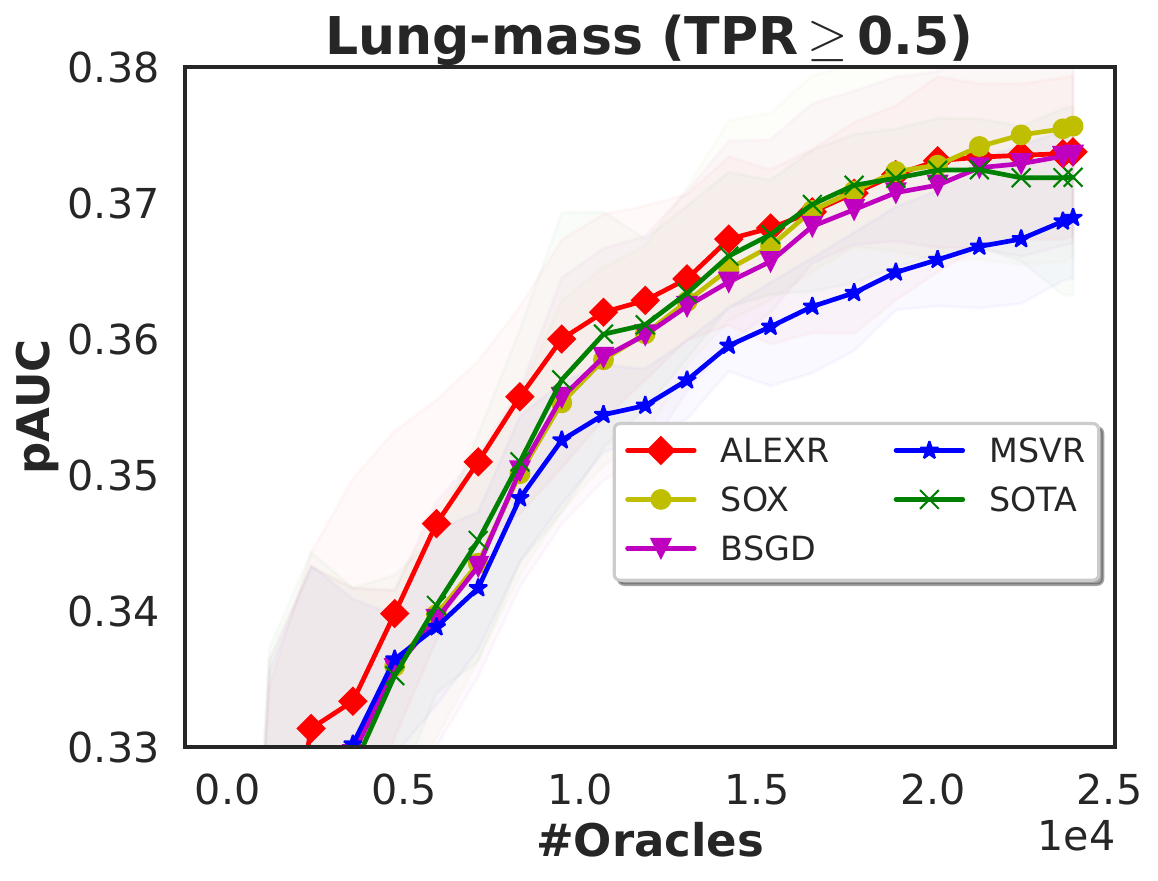}
	}
 	\subfigure{	\centering
		\includegraphics[width=0.235\linewidth]{./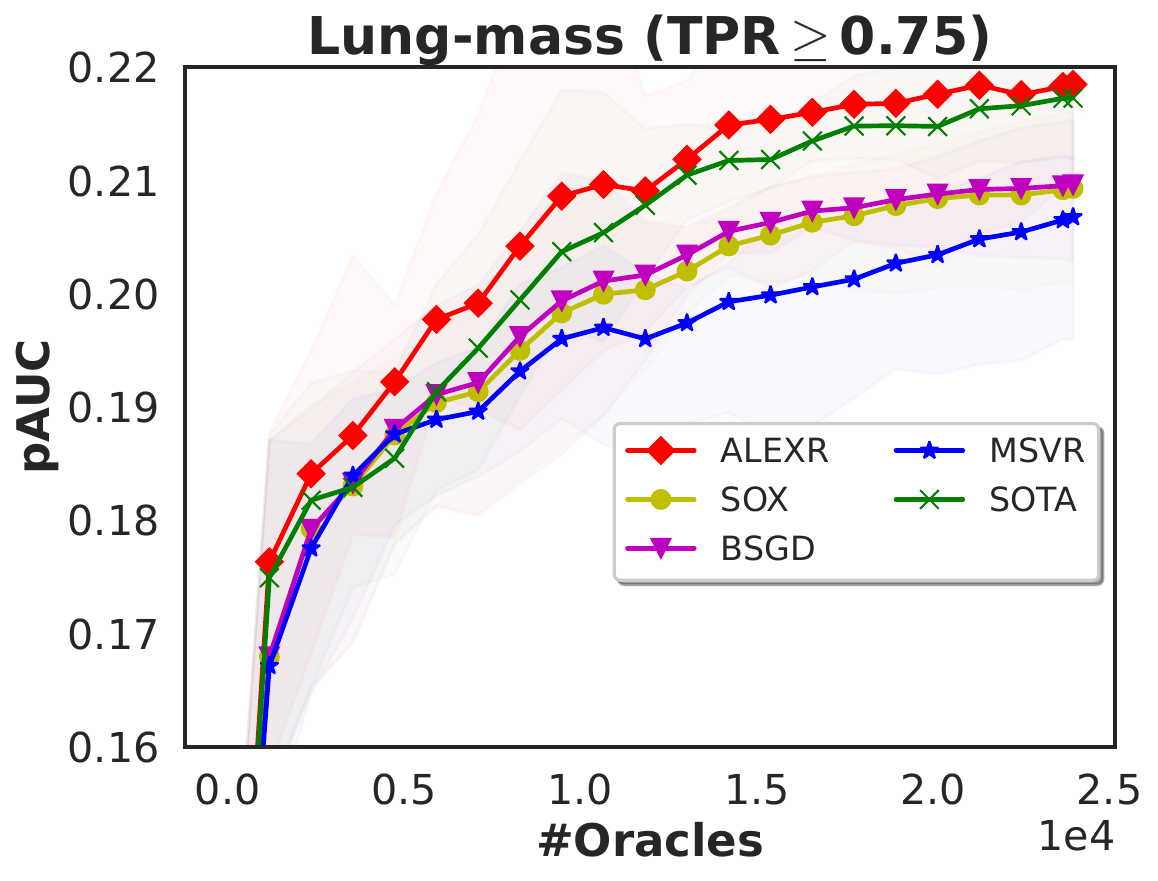}
	}
	\caption{Partial AUC evaluated on the validation datasets during the training process under TPR$\geq 0.5$ and TPR$\geq 0.75$.}
	\label{fig:pauc_curves}
\end{figure*}
First, we empirically compare our proposed ALEXR with baseline methods on the GDRO problem in \eqref{eq:penalized_gdro} with the CVaR divergence for the binary classification task.
We consider the linear model $w$ and the logistic loss $\ell(w;z)$. 

\textbf{Datasets.} We perform experiments on two datasets: a tabular dataset Adult~\citep{misc_adult_2} and an image dataset CelebA~\citep{liu2015faceattributes}. For the Adult dataset, we construct 83 groups according to features such as race and the task is to predict the income. For the CelebA dataset, we constructed 160 groups based on binary attributes such as sex and the task is to determine whether a person possesses blonde hair. Please see Appendix~\ref{sec:gdro_data_details} for more details.

\textbf{Baselines.} We compare ALEXR with previous algorithms on the FCCO problem including BSGD~\citep{hu2020biased}, SOX~\citep{wang2022finite}, and MSVR~\citep{jiang2022multi}\footnote{MSVR was designed for FCCO problems with smooth $f_i$. We replace the gradient $\nabla f_i$ in MSVR with a subgradient to make it applicable to this GDRO problem with a non-smooth $f_i$.}. Besides, we also compare ALEXR with previous algorithms for the GDRO problem, which include OOA~\cite{sagawa2019distributionally} and SGD with up-weighting (SGD-UW)~\cite{buda2018systematic}. OOA was originally proposed for the GDRO problem without a penalty term and we extend it to the CVaR-penalized GDRO based on an efficient algorithm for projection onto the capped simplex~\citep{lim2016efficient}, where we use the implementation in~\citet{projection-losses}. To show the benefit of GDRO, we also include SGD based on empirical risk minimization (ERM) as a baseline, which neglects the group information. We do not compare with some other GDRO algorithms~\citep{zhang2023stochastic,soma2022optimal} that do not support group sampling or do not apply to the CVaR-penalized problem. Moreover, algorithms for distributionally robust optimization (DRO)~\citep{levy2020large,meng2023model} are not applicable to the GDRO problem due to the stochastic oracles of per-group risk $R_i(w)$. We execute all algorithms for 5 runs with different random seeds. For a fair comparison, each algorithm samples 64 data points in each iteration. For SGD, these data points are sampled from the entire training dataset, whereas for other algorithms, we sample 8 groups and 8 data points per group. We tune the step sizes of all algorithms in the range $\{2,5,10\}\times 10^{\{-3, -2,-1\}}$. For SOX and MSVR, we also tune the momentum parameter $\gamma$ in the range $\{0.1, 0.5, 0.9\}$. For ALEXR, we choose the extrapolation parameter $\theta\in\{0.1,1.0\}$ and  $\psi_i(\cdot) = \frac{1}{2}(\cdot)^2$. For all algorithms, we choose the weight decay parameter $0.05$ on the Adult dataset and $0.1$ on the CelebA dataset.  

\textbf{Results.}  In Table~\ref{tab:test}, we report test accuracy for all algorithms on the worst-$(\alpha n)$ groups with $\alpha = 0.1$ and $0.15$. Besides, we plot the validation loss curves for FCCO algorithms sharing the same objective function~\eqref{eq:dual_gdro} in Figure~\ref{fig:curves}. 
First, we notice that the vanilla SGD performs poorly on the worst-$(\alpha n)$ groups' data. While the up-weighting trick offers some improvement for SGD, its effectiveness still falls short of Group DRO algorithms. Among GDRO algorithms, our proposed ALEXR exhibits faster convergence compared to baseline methods. ALEXR also achieves superior test accuracy compared to baseline methods in most cases. 

\subsection{Partial AUC Maximization with Restricted TPR}
The Area Under the ROC Curve (AUC) is a more informative metric than accuracy for assessing the performance of binary classifiers in the context of imbalanced data~\citep{yang2022auc}. In scenarios influenced by diagnostic or monetary considerations, the primary objective may be to maximize the partial AUC (pAUC) with a specified lower bound $\alpha$ for the true positive rate (TPR). As shown in \citet{DBLP:conf/icml/ZhuLWWY22}, a surrogate objective for maximizing pAUC with restricted TPR is formulated as
\begin{align}\label{eq:pauc}
\min_{w\in\R^d} \frac{1}{n_+ n_-}\sum_{a_i\in \S_+^\uparrow[1,n_+(1-\alpha)]}\sum_{a_j\in\S_-}L(w;a_i,a_j),
\end{align}
Here $\S_+, \S_-$ are the sets of positive/negative data, $w$ refers to the model and  $L(w;a_i,a_j) = \ell(h_w(a_j) - h_w(a_i))$ represents a continuous pairwise surrogate loss, where $h_w(a_i)$ denotes the prediction score for data $a_i$. Additionally, $\S_+^\uparrow[1,k]$ the bottom-$k$ positive data based on the prediction scores. In particular, $\ell$ is a convex and monotonically non-decreasing function, ensuring the consistency of the surrogate objective~\citep{gao2015consistency}.
Based on the duality~\citep{levy2020large}, the problem in~\eqref{eq:pauc} is equivalent to
\begin{align*}
& \min_{w,s\in\R}\frac{1}{n_+(1-\alpha)}\sum_{a_i\in \S_+} \Big[\frac{1}{n_-}\sum_{a_j\in\S_-} L(w;a_i,a_j)-s\Big]_+\\
&+ s, 
\end{align*}
which is a cFCCO problem with  $f_i = (\cdot)_+$ and  $g_i(w,s)=\frac{1}{n_-}\sum_{a_j\in\S_-} L(w;a_i,a_j)-s$ jointly convex to $(w,s)$.

In our experiments, we consider linear prediction model $w$ and two different lower bounds $\alpha$ of TPR: 0.5 and 0.75. 

\textbf{Baselines.} Apart from BSGD, SOX, and MSVR, we also include SGD with over-sampling for the cross-entropy (CE) loss and the SOTA algorithm~\citep{DBLP:conf/icml/ZhuLWWY22} as baselines. In each iteration, each algorithm samples an equal number of positive and negative data points (16 for each), which is based on the convergence theory in~\citet{DBLP:conf/icml/ZhuLWWY22}.

\textbf{Datasets.}  We perform experiments on four datasets: Covtype, Cardiomegaly, Lung-mass, and Higgs. The Covtype and Higgs datasets are from the LibSVM repository~\citep{chang2011libsvm}. To create imbalanced datasets, we randomly remove 99.5\% positive data from Covtype and 99.9\% positive data from Higgs. For Covtype, we randomly allocate 75\% of the data for training and 25\% for validation. For Higgs, we randomly select 500,000 data points for validation and the rest as training data. Cardiomegaly and Lung-mass are two imbalanced datasets that share the same collection of Chest X-ray images and different label annotations from the MedMNIST repository~\cite{medmnistv2}, where we use the default splits. We vectorize each 28$\times$28 image in Cardiomegaly/Lung-mass datasets as a data point. Statistics of datasets are listed in Table~\ref{tab:pauc_data_stats} of the appendix.

\textbf{Results.} In Figure~\ref{fig:pauc_curves}, we compare the pAUC curves during training. First, the results suggest that optimizing the surrogate loss in~\eqref{eq:pauc} outperforms optimizing the CE loss for maximizing pAUC with a restricted TPR. Moreover, ALEXR demonstrates overall superior performance when compared to other baselines including the SOTA algorithm specifically designed for pAUC maximization. 

\section{Conclusion and Discussion}

In this paper, we study a class of convex FCCO problems, called cFCCO, via its min-max reformulation \eqref{eq:pd}. We propose a single-loop primal-dual block-coordinate stochastic algorithm called ALEXR, which achieves improved iteration  complexities compared to previous works on both merely and strongly convex cFCCO problems with smooth $f_i$ and $g_i$. We also establish the iteration complexities of ALEXR when either $f_i$ or $g_i$ is non-smooth. Furthermore, we present lower complexity bounds to show that the convergence rate of ALEXR is near-optimal among first-order stochastic methods for cFCCO problems. Finally, we note that it remains an open problem to prove similar complexities as in this work for cFCCO with concave outer functions $f_i$ such as the logarithmic function, which has broad applications in machine learning.  

\section*{Acknowledgments}
We are deeply grateful to Guanghui Lan for his invaluable feedback on this paper. We are also thankful to Stephen J. Wright for bringing the analysis of the duality gap for PureCD to our attention. We thank Guanghui Wang for the initial discussion of the problem. We also thank anonymous reviewers for their comments. BW and TY were partially supported by by National Science Foundation Award \#2306572 and \#2147253. 

\section*{Impact Statement}

This paper presents work whose goal is to advance the field of 
Machine Learning. There are many potential societal consequences 
of our work, none of which we feel must be specifically highlighted here.

\bibliography{draft,all}
\bibliographystyle{icml2025}


\appendix
\onecolumn


\begin{table}[H]
\caption{Notations we use throughout the paper.}
\label{tab:notation}
\footnotesize
\centering
\begin{tabular}{|c|l|c|}
\hline 
\multicolumn{3}{|c|}{{\bf Basic} } \\
\hline 
$d$ & Number of the model parameters & \\
\hline
$n$ & Number of summands in cFCCO & \eqref{eq:primal} \\
\hline 
$\mathbb{R}_+$  &  Set of non-negative real numbers & Below~\eqref{eq:pd}\\
\hline
$(x)_+$ & $\max\{x,0\}$ & \\
\hline 
$[n]$  &  Set $\{1,2,\dotsc,n\}$&\\
\hline 
$y^{(i)}$ & The $i$-th block of size $m$ in the vector $y\in\R^{nm}$ &\\
\hline
$a\lor b$ & $\max(a,b)$ for $a,b\in\R$ & \\ 
\hline
$a\land b$ & $\min(a,b)$ for $a,b\in\R$ & \\ 
\hline 
$a\asymp b$ & There exists $c,C>0$ such that $cb\leq a\leq Cb$ for $a,b > 0$&\\
\hline
$\mathbb{I}_E$ & $=1$ if the event $E$ is true and $=0$ otherwise & Appendix~\ref{sec:lower_proof}\\
\hline
\hline 
\multicolumn{3}{|c|}{{\bf Standard} }\\
\hline 
$f^*$ & The convex conjugate of a function $f$ & \\
\hline
$\mu$ & The strong convexity constant  & Assumption~\ref{asm:domain} \\
\hline 
$\psi_i$ & Strictly convex and differentiable $\psi_i:\R^m\rightarrow\R$ &\\
\hline
$\X$ & Convex and closed domain of the model $x$& \eqref{eq:primal}\\
\hline 
$\Y$ & The decomposable domain of the dual variable $y$, $\Y = \Y_1\times \dotsc\times \Y_n$, $\Y_i\subseteq \R_+^{m}$ & \eqref{eq:pd}\\
\hline
$\Y_i$ & A normed vector space in $\R_+^{m}$ with norm $\Norm{\cdot}$ & \eqref{eq:pd}\\
\hline %
$y^{(i)}$ & The $i$-th size-$m$ block of a vector $y\in\R^{nm}$ \\
\hline
$\Y_i^*$ & Dual space of $\Y_i$ with norm $\Norm{\cdot}_*$ & \\
\hline
$U_{\psi_i}(u,v)$ & Bregman divergence $\psi_i(u) - \psi_i(v) - \inner{\nabla \psi_i(v)}{u-v}$ for $u,v\in\R^m$ associated with $\psi_i$  &\\
\hline 
$U_\psi(y_1,y_2)$ & Defined as $\sum_{i=1}^n U_{\psi_i}(y_1^{(i)},y_2^{(i)})$ for $y_1,y_2\in\R^{nm}$ & \\
\hline 
$D_{\psi_i,\Y_i}$ & The diameter $\left[\max_{v\in\Y_i}\psi_i(v) - \min_{v\in\Y_i}\psi_i(v)\right]^{\nicefrac{1}{2}}$ of a set $\Y_i$ w.r.t. $\psi_i$  & \\
\hline 
$D_{\Y}$ & Defined as $[\sum_{i=1}^n D_{\psi_i,\Y_i}^2]^{\nicefrac{1}{2}}$& \\
\hline 
$D_{\X}$ & $D_{\psi_i,\X}$ with $\psi_i=\frac{1}{2}\Norm{\cdot}_2^2$& \\
\hline
$\Norm{T_i}_{\text{op}}$ & Operator norm $\sup_{x\in\X}\left\{\frac{\Norm{T_i x}_*}{\Norm{x}_2}\right\}$ for a linear operator $T_i:\X\rightarrow \Y_i^*$ & \\
\hline
$\Norm{T_i^*}_{\text{op}}$ & Operator norm $\sup_{x\in\X}\left\{\frac{\Norm{T_i x}_*}{\Norm{x}_2}\right\}$ of the adjoint operator $T_i^*:\Y_i\rightarrow \X$ & \\
\hline
$\text{span}(S)$ & Linear span of a set $S$ of vectors &\\
\hline 
$\Delta_n$ & The $(n-1)$-dimensional probability simplex $\Delta_n\subset\R_+^n$ & \\
\hline
$C_g$ & Lipschitz constant of inner function $g_i$ & Assumption~\ref{asm:inner}\\
\hline 
$C_f$ & Lipschitz constant of outer function $f_i$ & Assumption~\ref{asm:outer}\\
\hline 
$\sigma_0^2,\sigma_1^2$ & Variance upper bounds of zero-th and first order oracles of $g_i$ & Assumption~\ref{asm:var}\\
\hline 
$\delta^2$ & Variance upper bound of compositional stochastic gradient  & Assumption~\ref{asm:var}\\
\hline
\hline 
\multicolumn{3}{|c|}{{\bf Algorithm} }\\
\hline 
$\S_t$ & Batch $\S_t\subset [n]$ of size $S$ sampled in the $t$-th iteration & Algorithm~\ref{alg:ALEXR}\\
\hline
\multirow{2}{*}{$\B_t^{(i)},\tilde{\B}_t^{(i)}$} & Two i.i.d. batches of size-$B$ sampled from $\mathbb{P}_i$ in the $t$-th iteration. Batches $\{\B_t^{(i)}\}_{i\in\S_t}$ are  & \\
& actually sampled in Algorithm~\ref{alg:ALEXR}; Batches $\{\B_t^{(i)}\}_{i\notin\S_t}$ are virtual and only for analysis & \\
\hline 
$g_i(x;\B^{(i)})$ & Stochastic estimator $\frac{1}{B}\sum_{\zeta_i\in \B^{(i)}} g(x;\zeta_i)$ based on the mini-batch $\B^{(i)}$ sampled from $\mathbb{P}_i$ & for $g_i(x)$ in \eqref{eq:primal}\\
\hline
$\eta,\tau,\theta$ & Hyperparameters of ALEXR &  Algorithm~\ref{alg:ALEXR}\\
\hline
$\tilde{g}_t^{(i)}$ & Extrapolated stochastic estimator for the inner function value  & Line~\ref{lst:line:dual_extrap} in Algorithm~\ref{alg:ALEXR}\\
\hline
\multicolumn{3}{|c|}{{\bf Analysis} }\\
\hline
$F(x)$ & Objective function of cFCCO & \eqref{eq:primal}\\
\hline
$L(x,y)$ & Objective function of the min-max reformulation & \eqref{eq:pd}\\
\hline
$x_*$ & A minimizer of $F(x)$ in \eqref{eq:primal}& \\
\hline
$(x_*,y_*)$ & Unique saddle point of $L(x,y)$  in~\eqref{eq:pd} when $f_i$ is smooth and $r$ is strongly convex& \\
\hline
$\bar{y}_t$ & Dual auxiliary sequence defined for the convenience of convergence analysis & \eqref{eq:auxiliary_seq}\\
\hline
$\bar{x}_T$ & Time-average primal iterate $\frac{1}{T}\sum_{t=0}^{T-1} x_{t+1}$ & \\
\hline
$\tilde{y}_T$ & $\tilde{y}_T^{(i)} \in \argmax_{v\in \Y_i}\{v^\top g_i(\bar{x}_T) - f_i^*(v)\}\in{\partial f_i(g_i(\bar{x}_T))} $& \\
\hline
$\dot{y}_t$ & Virtual sequence defined in the proofs of Lemma~\ref{lem:diamond_de_sc} and Lemma~\ref{lem:diamond_de}& Below~\eqref{eq:diamond_de_starter_sc} and~\eqref{eq:diamond_de_starter}\\
\hline 
$\Gamma_t$ & Defined as $\frac{1}{n} \sum_{i=1}^n (g_i(x_t) - g_i(x_{t-1}))^\top (y^{(i)}-y_t^{(i)})$ & Lemma~\ref{lem:diamond_de_sc}\\
\hline
$\Upsilon_t^x$ & Defined as $\frac{1}{2}\E \Norm{x_* - x_t}_2^2$ & Section~\ref{sec:proof_scvx}\\
\hline
$\Upsilon_t^y$ & Defined as $\frac{1}{S}\E U_\psi(y_*,y_t)$ & Section~\ref{sec:proof_scvx}\\
\hline
$\hat{y}_t$ & Virtual sequence constructed in Lemma~\ref{lem:dual} & Below~\eqref{lem:diamond_de_sc}\\
\hline
$\hhat{y}_t$, $\breve{y}_t$ & Virtual sequences constructed in Lemma~\ref{lem:diamond_de} & Below~\eqref{eq:diamond_de}\\
\hline
$\bar{\bar{y}}_T$ & Time-average dual auxiliary iterate $\frac{1}{T}\sum_{t=0}^{T-1} \bar{y}_{t+1}$ & Below~\eqref{eq:cvx_gap}\\
\hline
\end{tabular}
\end{table}

\section{Other Related Work}\label{sec:other_related}

The min-max reformulation of cFCCO in~\eqref{eq:pd} is closely related to the following prior work. 

\textbf{Convex-Concave Saddle Point (SP) Problem.} The saddle point (SP) problem $\min_{x\in\X}\max_{y\in\Y} L(x,y)$ that is $\mu_x$-convex in $x$ and $\mu_y$-concave in $y$ ($\mu_x,\mu_y\geq 0$) has been thoroughly studied. We refer to the SP problem with $\mu_x,\mu_y>0$ as a strongly-convex-strongly-concave (SCSC) problem while those with $\mu_x,\mu_y=0$ as a convex-concave (CC) problem. A saddle point $(x_*,y_*)$, if it exists,  satisfies the condition $L(x_*,y) \leq L(x_*,y_*) \leq L(x,y_*)$, $\forall (x,y)\in\X\times\Y$. Besides, the SP problem is closely related to the more general monotone variational inequalities (VI), which involve finding a point $z_* = (x_*,y_*)$ such that $\inner{\Phi(z_*)}{z-z_*}\geq 0$, $\Phi(z) = (\partial_x L(x,y), -\partial_y L(x,y))$, $\forall z\in \X\times\Y$. To assess the optimality of any point $(x_{\text{out}},y_{\text{out}})\in\X\times \Y$,  we can employ the concept of the duality gap, defined as $\textrm{Gap}(x_{\text{out}},y_{\text{out}}) \coloneqq \max_{x,y} \{L(x_{\text{out}}, y) - L(x,y_{\text{out}})\}$,  and for SCSC problems, we can also use $D(x_{\text{out}},y_{\text{out}})\coloneqq \frac{\mu_x}{2}\Norm{x_{\text{out}}-x_*}_2^2 + \frac{\mu_y}{2}\Norm{y_{\text{out}} - y_*}_2^2$.  The convergence rate is quantified by measuring the number of iterations required to find an $\epsilon$-approximate saddle point or an $\epsilon$-approximate solution to the VI, satisfying one of the following conditions: (i) $\textrm{Gap}(x_{\text{out}},y_{\text{out}})\leq \epsilon$; (ii) $D(x_{\text{out}},y_{\text{out}})\leq \epsilon$; (iii) $\inner{\Phi(z_{\text{out}})}{z_{\text{out}}-z}\leq \epsilon$.

Accessing exact oracles such as $\nabla_x L$ and $\nabla_y L$ may not be feasible in many real-world scenarios. Instead, the available resources provide only unbiased stochastic estimators, denoted as $\tilde{\nabla}_x L$ and $\tilde{\nabla}_y L$, with variances bounded by $\sigma^2$. This limitation has prompted the development of numerous algorithms tailored for addressing the stochastic saddle point problem (SPP) and the more general stochastic variational inequalities (SVIs). For instance, the stochastic mirror descent (SMD) method~\citep{nemirovski2009robust} achieves the optimal convergence rate of $O(\frac{1}{\epsilon^2})$ for non-Lipschitz SVIs. For Lipschitz monotone SVIs, the stochastic mirror-prox (SMP) method~\citep{juditsky2011solving} attains the optimal rate of $O(\frac{1}{\epsilon} + \frac{\sigma^2}{\epsilon^2})$. For SCSC and non-smooth SP problems, \citet{yan2020optimal} establish the $\tilde{O}(\frac{1}{\epsilon}+\frac{1}{\mu_x\epsilon} \lor \frac{1}{\mu_y\epsilon})$ rate with probability $1-p$. \citet{hsieh2019convergence} propose a single-call stochastic extragradient (SSEG) method that achieves a rate of $O(\frac{1}{\epsilon}+\frac{\sigma^2}{\mu_x\epsilon} \lor  \frac{\sigma^2}{\mu_y\epsilon})$ for Lipschitz and strongly monotone SVIs. More recently, several works have devised stochastic algorithms for both the SSP and SVI problems, achieving (near-)optimal deterministic and stochastic convergence rates simultaneously. \citet{zhang2021robust} introduce the SAPD algorithm, which reaches a convergence rate of $\tilde{O}(\frac{1}{\mu_x}\lor \frac{1}{\mu_y} + \frac{1}{\sqrt{\mu_x\mu_y}} + \frac{\sigma^2}{\mu_x\epsilon}\lor \frac{\sigma^2}{\mu_y\epsilon})$ for the SCSC problem and $O(\frac{1}{\epsilon}+\frac{\sigma^2}{\epsilon^2})$ for the CC problem. These algorithms cannot be directly applied to the min-max reformulation of cFCCO in \eqref{eq:pd} because the dual variable $y$ could be very high-dimensional, making it computationally infeasible to update the entire $y$. This challenge motivates the block-coordinate stochastic update in our algorithm ALEXR.

\textbf{Coordinate Methods for the Block-Separable Deterministic SP Problem.} A special class of bilinearly-coupled SP problem is in the form $\min_{x}\max_y L(x,y) \coloneqq \frac{1}{n}\sum_{i=1}^n [ y^{(i)} a_i^\top x - \phi_i(y^{(i)})] + r(x)$, where $L(x,y)$ is block-separable w.r.t. the dual variable $y$. One illustrative example is the primal-dual reformulation of the (regularized) empirical risk minimization (ERM) problem, denoted as $\min_x F(x)$, where $F(x)$ is defined as $F(x)\coloneqq \frac{1}{n}\sum_{i=1}^n \ell (a_i^\top x) + r(x)$. This reformulation applies to data-label pairs ${(a_i,b_i)}_{i=1}^n$ in the context of a linear model. Particularly in scenarios with a significantly large value of $n$, the computational overhead of computing $\nabla_y L(x,y)$ and updating $y$ can become prohibitively expensive. In such cases, randomized coordinate methods offer a viable solution by reducing the per-iteration oracle cost from $O(n)$ to $O(1)$. The SPDC method~\citep{zhang2015stochastic} leads to $\tilde{O}(n+\sqrt{\frac{n}{\mu_x\mu_y}})$ convergence rate to make $\E[D(\bar{x},\bar{y})]\leq \epsilon$ for the SCSC problem and $\tilde{O}\left(n+\frac{\sqrt{n}}{\epsilon}\right)$ rate to make $\E[F(\bar{x}) - F(x_*)] \leq \epsilon$ for the CC problem. Recently, \citet{alacaoglu2022complexity} extended the Pure-CD originally proposed in \citet{alacaoglu2020random} to incorporate importance sampling and exploit the potential sparsity in $A$. For the CC problem with dense $A$, Pure-CD not only achieves an improved rate of $O(n+\frac{\sqrt{n}}{\epsilon})$ to guarantee $\E[F(\bar{x}) - F(x_*)] \leq \epsilon$ but also attains a rate of $\tilde{O}(\frac{n}{\epsilon})$ to ensure $\E[\text{Gap}(\bar{x},\bar{y})] \leq \epsilon$. It is worth noting that $\E[\text{Gap}(\bar{x},\bar{y})]\leq \epsilon$ serves as a sufficient but not necessary condition for $\E[F(\bar{x}) - F(x_*)] \leq \epsilon$.

In addition to addressing the bilinearly-coupled block-separable saddle point (SP) problem, \citet{hamedani2023randomized} have extended their focus to the more general Convex-Concave (CC) problem, defined as $L(x,y) = \Phi(x,y) - \phi(y) + \sum_{i=1}^m h_i(x^{(i)})$. Their work establishes a convergence rate of $O\left(\frac{m}{\epsilon}\right)$ for a randomized block-coordinate primal-dual method, ensuring that $\E[\text{Gap}(\bar{x},\bar{y})] \leq \epsilon$. Furthermore, \citet{jalilzadeh2019doubly} have delved into scenarios where $L(x,y)$ exhibits block-separability to both $x$ and $y$. In this context, $L(x,y)$ is defined as $L(x,y) = \Phi(x,y) - \sum_{i=1}^n \phi_i(y^{(i)}) + \sum_{j=1}^m h_j(x^{(j)})$. They introduce a doubly-randomized block-coordinate method to address such problems. It is worth emphasizing that all the works mentioned in this section~\citep{zhang2015stochastic,alacaoglu2020random,alacaoglu2022complexity,jalilzadeh2019doubly} rely on the assumption of having access to the exact $\nabla_x \Phi(x,y)$ and $\nabla_y \Phi(x,y)$. In contrast, our work addresses the more challenging problem where only stochastic oracles are available.

\section{Others Applications of cFCCO}\label{sec:other_apps}

In Section~\ref{sec:exps}, we introduce two applications of cFCCO: Group Distributionally Robust Optimization (GDRO) and  Partial AUC (pAUC) Maximization with a Restricted TPR. Here we provide more applications of cFCCO in machine learning. 

\textbf{Robust Logistic Regression.} Consider a collection of data-label pairs, denoted as ${(a_i, b_i)}_{i=1}^n$. We can formulate the robust logistic regression problem as $
\min_{x\in \X}\frac{1}{n}\sum_{i=1}^n \log(1+\exp b_i\E[\A(a_i)^\top x\mid a_i]) + r(x)$. In this formulation, $\A(a_i)$ represents the perturbed data generated from an underlying distribution $\mathbb{P}_i$. This is a special case of \eqref{eq:primal}, where the functions $f_i(\cdot)$ are convex and monotonically non-decreasing given by $f_i(\cdot) = \log(1+\exp(b_i \cdot))$, and $g_i(x) = \E_{A(a_i) \sim \mathbb{P}_i}[\A(a_i)^\top x]$.

\textbf{Bellman Residual Minimization.}
The task of approximating the value function, denoted as $V^\pi(s)$, for each state $s$ under policy $\pi$ using a linear mapping can be expressed as $\min_{x\in \X} \sum_{s=1}^S (\phi_s^\top x - \sum_{s'} \P_{s,s'}^\pi[r_{s,s'} + \gamma \cdot \phi_{s'}^\top x])^2$. In this formulation, $\phi_s$ and $\phi_{s'}$ are feature vectors representing states $s$ and $s'$, respectively. Additionally, $r_{s,s'}$ represents the random reward obtained during the transition from state $s$ to $s'$, $\gamma<1$ is the discount factor, $\pi$ denotes the policy, and $\P_{s,s'}^\pi$ represents the probability of transitioning from state $s$ to $s'$ under policy $\pi$. This problem can be formulated as \eqref{eq:primal}, where the functions $f_s(\cdot)$ are convex and given by $f_s(\cdot) = \frac{1}{S} (\cdot)^2$, and the affine function $g_s(x) = \phi_s^\top x - \sum_{s'} \P_{s,s'}^\pi[r_{s,s'} + \gamma \cdot \phi_{s'}^\top x]$.

\textbf{Bipartite Ranking.} Imbalanced data classification is usually tackled in the context of the bipartite ranking problem.  There is often a desire to penalize those positive examples with lower scores. One approach is the $p$-norm push, introduced by \citet{rudin2009p}. It formulates the problem as $\min_{x\in\X}\frac{1}{n_+}\sum_{a_i\in\D_+}\left(\frac{1}{n_-}\sum_{a_j\in\D_-}\ell(s_x(a_j) - s_x(a_i))\right)^p + r(x)$, $p\geq 1$. Here, $\D_+$ and $\D_-$ represent positive and negative data sets. The function $s_x(a)$ denotes the ranking score of data point $a$, which is determined by a linear model parameterized by $x$. The loss function $\ell$ is non-negative, convex, and monotonically non-decreasing, for instance, $\ell(\cdot) = \exp(\cdot)$. The $p$-norm push method is in a special case of \eqref{eq:primal}, where the functions $f_i(\cdot)$ are convex and monotonically non-decreasing and given by $f_i(\cdot) = (\cdot)^p$, and the convex function $g_i(x) = \frac{1}{n_+}\sum_{a_j\in\D_+}\ell(s_x(a_j) - s_x(a_i))$. One popular approach for retrieval problems is maximizing the precision or recall at top $k$ positions (prec/rec@$k$).  \citet{yang2022algorithmic} has formulated the problem as $\min_{x\in\X}\frac{1}{n_+}\sum_{a_i\in\D_+}\ell_1(\sum_{a_j\in\D_+\cup\D_-}\ell_2(s_x(a_j) - s_x(a_i) -k )) + r(x)$, where $\ell_1, \ell_2$ are monotonically non-decreasing convex surrogate losses of the zero-one loss. Hence, maximizing precision or recall at top $k$ positions with a convex model $s_x(a)$ is covered by \eqref{eq:primal}.

\textbf{Multi-Task GDRO.} GDRO can be extended to the multi-task setting. Consider a scenario with $n$ tasks and $m$ groups. We represent the data distribution for the $i$-th task and the $j$-th group as $\mathbb{P}_{i,j}$. Additionally, let $\ell(x;z)$ be the loss function associated with parameter $x$ on data point $z$. The Multi-Task GDRO, with a regularization term $r$, is formulated as $\min_{x\in \X}  \frac{1}{n}\sum_{i=1}^n \max_{j\in[m]} \E[\ell(x;z_{ij})] + r(x)$. In this formulation, the functions $f_i(\cdot)$ are defined as $f_i(g_i) = \max_{j\in[m]}(g_{ij})$, and $g_{ij}(x) = \E[\ell(x;z_{ij})]$, where $g_i(x) = [g_{i1}(x), \ldots, g_{im}(x)]$. Alternatively, we may consider the smooth $f_i(g_i) = \log\sum_{j\in[m]}\exp(g_{ij})$. This problem is particularly relevant for the scenario featuring a substantial number $n$ of tasks, such as identity prediction in human faces, with a limited number $m$ of groups (e.g., lightning conditions).

\section{Basic Lemmas}\label{sec:lemmas}

\begin{lemma}\label{lemma:equiv}
Suppose that $y_0^{(i)} =  f_i'(u_0^{(i)}) \in \partial f_i (u_0^{(i)})$ for some $u_0^{(i)} \in \R$ and $u_{t+1}^{(i)} = \begin{cases}
    \frac{\tau}{1+\tau} u_t^{(i)} + \frac{1}{1+\tau} \tilde{g}_t^{(i)}, & i \in \S_t\\
    u_t^{(i)}, & i\notin \S_t.
\end{cases}$
Algorithm~\ref{alg:ALEXR} with $\psi_i = f_i^*$ satisfies that $y_t^{(i)} = f_i'(u_t^{(i)}) \in \partial f_i (u_t^{(i)})$ for all $i \in \{1,\dotsc,n\}$ and $t\geq 0$.
\end{lemma}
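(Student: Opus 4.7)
The plan is to proceed by induction on $t$, using the standard Fenchel duality fact that $y \in \partial f_i(u) \iff u \in \partial f_i^*(y)$, together with the first-order optimality condition of the dual proximal step in Algorithm~\ref{alg:ALEXR}. The base case $t=0$ is immediate from the hypothesis $y_0^{(i)} = f_i'(u_0^{(i)}) \in \partial f_i(u_0^{(i)})$.

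For the inductive step, assume $y_t^{(i)} \in \partial f_i(u_t^{(i)})$, equivalently $u_t^{(i)} \in \partial f_i^*(y_t^{(i)})$. For coordinates $i \notin \S_t$, both $y_{t+1}^{(i)} = y_t^{(i)}$ and $u_{t+1}^{(i)} = u_t^{(i)}$ by definition, so the conclusion is inherited trivially. For $i \in \S_t$, plug $\psi_i = f_i^*$ into the dual update in line~\ref{eq:dual_update}, yielding
\begin{align*}
y_{t+1}^{(i)} = \argmax_{v \in \Y_i}\Bigl\{v\,\tilde{g}_t^{(i)} - f_i^*(v) - \tau\bigl[f_i^*(v) - f_i^*(y_t^{(i)}) - u_t^{(i)}(v - y_t^{(i)})\bigr]\Bigr\},
\end{align*}
where I have used $u_t^{(i)} \in \partial f_i^*(y_t^{(i)})$ as the subgradient in the Bregman divergence $U_{f_i^*}(v, y_t^{(i)})$. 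Dropping terms independent of $v$ and dividing through by $1+\tau$, this simplifies to
\begin{align*}
y_{t+1}^{(i)} = \argmax_{v \in \Y_i}\Bigl\{v \cdot \tfrac{\tilde{g}_t^{(i)} + \tau u_t^{(i)}}{1+\tau} - f_i^*(v)\Bigr\} = \argmax_{v \in \Y_i}\bigl\{v \cdot u_{t+1}^{(i)} - f_i^*(v)\bigr\},
\end{align*}
where the second equality uses the definition $u_{t+1}^{(i)} = \frac{\tau u_t^{(i)} + \tilde{g}_t^{(i)}}{1+\tau}$. By the Fenchel--Young identity, the argmax of $\{v \cdot w - f_i^*(v)\}$ over $v$ is precisely $\partial f_i(w)$, hence $y_{t+1}^{(i)} \in \partial f_i(u_{t+1}^{(i)})$, completing the induction.

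I expect no serious obstacle, since the argument is essentially unwinding the Fenchel conjugate structure. The only subtlety worth stating carefully is the choice of the Bregman subgradient: $f_i^*$ need not be smooth for general convex $f_i$, but the inductive hypothesis $u_t^{(i)} \in \partial f_i^*(y_t^{(i)})$ provides a valid subgradient to use in the definition of $U_{f_i^*}(v,y_t^{(i)})$, so the computation goes through. When $f_i$ is additionally Legendre-type (as noted in the excerpt for the primal-only implementation), $\partial f_i^*(y_t^{(i)})$ is single-valued and the identification $u_t^{(i)} = \nabla f_i^*(y_t^{(i)})$ is unambiguous.
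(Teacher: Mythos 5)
Your proposal is correct and follows essentially the same route as the paper: induction on $t$, the trivial case $i\notin\S_t$, and for $i\in\S_t$ expanding $U_{f_i^*}(v,y_t^{(i)})$ with $u_t^{(i)}\in\partial f_i^*(y_t^{(i)})$ as the subgradient, completing the square to get $\argmax_v\{v\,u_{t+1}^{(i)} - f_i^*(v)\}\in\partial f_i(u_{t+1}^{(i)})$. Your explicit remark about which subgradient of $f_i^*$ to use in the Bregman divergence is a point the paper leaves implicit, but the argument is the same.
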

\begin{proof}
We prove it by induction. The base case follows from the premise. Assume that $y_t^{(i)} =f_i'(u_t^{(i)}) \in \partial f_i(u_t^{(i)})$. 

$\bullet$ Case I ($i\notin \S_t$): Note that $y_{t+1}^{(i)} = y_t^{(i)}$ and $u_{t+1}^{(i)} = u_t^{(i)}$. Thus, $y_{t+1}^{(i)} =f_i'(u_{t+1}^{(i)}) \in \partial f_i(u_{t+1}^{(i)})$.

$\bullet$ Case II ($i\in \S_t$): This part resembles Lemma 2 in~\citet{zhang2020optimal}. Based on the update rule and the premise $y_t^{(i)} \in \partial f_i(u_t^{(i)})$, we have
    \begin{align*}
y_{t+1}^{(i)} & = \arg\max_{y^{(i)}} \left\{y^{(i)} \tilde{g}_t^{(i)} - f_i^*(y^{(i)}) - \tau \left(f_i^*(y^{(i)})  - (f_i^*)'(y_t^{(i)})\cdot y^{(i)}\right)\right\} \\
    & = \arg\max_{y^{(i)}} \left\{\left(\frac{1}{1+\tau} \tilde{g}_t^{(i)} + \frac{\tau}{1+\tau}u_t^{(i)}\right)\cdot y^{(i)} - f_i^*(y^{(i)})\right\} \in \partial  f_i\left(\frac{1}{1+\tau} \tilde{g}_t^{(i)} + \frac{\tau}{1+\tau}u_t^{(i)}\right) = \partial f_i(u_{t+1}^{(i)}).
    \end{align*}

\end{proof}

The following lemma is well-known and similar ideas have been used in \citet{nemirovski2009robust,juditsky2011solving}.

\begin{lemma}\label{lem:mds_ip}
Consider a martingale difference sequence $\Delta_t$ adapted to $\F_t$. Define a sequence $\{\hat{\pi}_t\}_t$:
\begin{align*}
\hat{\pi}_0 = 0,\quad  \hat{\pi}_{t+1} = \argmin_{v} \{\inner{-\Delta_t}{v} + \alpha U_\psi(v, \hat{\pi}_t)\},
\end{align*}
where we also assume that $\psi$ is $\mu_\psi$-strongly convex w.r.t. $\Norm{\cdot}$ $(\mu_\psi>0)$. For any $v$ (that possibly depends on $\Delta_t$) we have
\begin{align*}
\E\left[\inner{\Delta_t}{v}\right] \leq \E\left[\alpha U_\psi(v, \hat{\pi}_t) - \alpha U_\psi(v, \hat{\pi}_{t+1})\right]  + \frac{1}{2\alpha\mu_\psi}\E\Norm{\Delta_t}_*^2.
\end{align*}
\end{lemma}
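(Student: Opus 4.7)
The plan is to apply the standard three-point inequality associated with the Bregman proximal update, combined with a splitting that allows the martingale difference property of $\Delta_t$ to cancel terms in expectation. First, I would write the telescoping decomposition
\[
\inner{\Delta_t}{v} \;=\; \inner{\Delta_t}{v - \hat{\pi}_{t+1}} \;+\; \inner{\Delta_t}{\hat{\pi}_{t+1} - \hat{\pi}_t} \;+\; \inner{\Delta_t}{\hat{\pi}_t}.
\]
The first piece is where the proximal step does its work; the second piece is where we pay the variance; the third piece vanishes in expectation because $\hat{\pi}_t$ is $\F_{t-1}$-measurable (the recursion depends only on $\Delta_0,\ldots,\Delta_{t-1}$) while $\E[\Delta_t \mid \F_{t-1}] = 0$.

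Next I would bound $\inner{\Delta_t}{v - \hat{\pi}_{t+1}}$ by invoking the first-order optimality of $\hat{\pi}_{t+1}$ for the strongly convex problem $\min_v\{-\inner{\Delta_t}{v} + \alpha U_\psi(v,\hat{\pi}_t)\}$. Using the standard three-point identity for Bregman divergences, one obtains
\[
\inner{\Delta_t}{v - \hat{\pi}_{t+1}} \;\leq\; \alpha U_\psi(v,\hat{\pi}_t) - \alpha U_\psi(v,\hat{\pi}_{t+1}) - \alpha U_\psi(\hat{\pi}_{t+1},\hat{\pi}_t).
\]
For the second piece I would apply Fenchel--Young (or Cauchy--Schwarz plus Young) together with the $\mu_\psi$-strong convexity of $\psi$, which gives $U_\psi(\hat{\pi}_{t+1},\hat{\pi}_t) \geq \tfrac{\mu_\psi}{2}\Norm{\hat{\pi}_{t+1}-\hat{\pi}_t}^2$, and hence
\[
\inner{\Delta_t}{\hat{\pi}_{t+1} - \hat{\pi}_t} \;\leq\; \frac{1}{2\alpha\mu_\psi}\Norm{\Delta_t}_*^2 + \frac{\alpha\mu_\psi}{2}\Norm{\hat{\pi}_{t+1}-\hat{\pi}_t}^2 \;\leq\; \frac{1}{2\alpha\mu_\psi}\Norm{\Delta_t}_*^2 + \alpha U_\psi(\hat{\pi}_{t+1},\hat{\pi}_t),
\]
so the $-\alpha U_\psi(\hat{\pi}_{t+1},\hat{\pi}_t)$ left behind by the three-point inequality is exactly absorbed.

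Summing the two bounds and taking total expectation, the third term $\E[\inner{\Delta_t}{\hat{\pi}_t}]$ vanishes by the tower property, yielding the stated inequality. The one subtle point to verify is that allowing $v$ to depend on $\Delta_t$ causes no issue: the three-point inequality is pathwise (holds for every realization), and the martingale cancellation is only invoked for $\hat{\pi}_t$, which is independent of $\Delta_t$. So the main (minor) obstacle is simply being careful that the pathwise inequality is applied before taking expectation and that only the $\hat{\pi}_t$ piece uses the zero-mean property, never the $v$ piece.
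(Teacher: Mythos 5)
Your proposal is correct and follows essentially the same route as the paper's proof: the three-point inequality for the Bregman proximal step, Young's inequality paired with the $\mu_\psi$-strong convexity of $\psi$ to absorb the $-\alpha U_\psi(\hat{\pi}_{t+1},\hat{\pi}_t)$ term, and the martingale property applied only to the $\inner{\Delta_t}{\hat{\pi}_t}$ piece. The paper phrases the decomposition as adding $\inner{-\Delta_t}{\hat{\pi}_t - \hat{\pi}_{t+1}}$ to both sides of the three-point inequality rather than writing the three-term split up front, but this is the identical argument, and your remark that $v$ may depend on $\Delta_t$ because the cancellation is only invoked for $\hat{\pi}_t$ is exactly the point the lemma is designed around.
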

\begin{proof}
Use the three-point inequality:
\begin{align*}
\inner{-\Delta_t}{\hat{\pi}_{t+1} - v} \leq  \alpha U_\psi(v, \hat{\pi}_t) - \alpha U_\psi(v, \hat{\pi}_{t+1}) - \alpha U_\psi(\hat{\pi}_{t+1}, \hat{\pi}_t).
\end{align*}
Add $\inner{-\Delta_t}{\hat{\pi}_t - \hat{\pi}_{t+1}}$ to both sides and use Young's inequality.
\begin{align*}
\inner{-\Delta_t}{\hat{\pi}_t - v} & \leq  \alpha U_\psi(v, \hat{\pi}_t) - \alpha U_\psi(v, \hat{\pi}_{t+1}) - \alpha U_\psi(\hat{\pi}_{t+1}, \hat{\pi}_t) + \inner{\Delta_t}{\hat{\pi}_{t+1} - \hat{\pi}_t}\\
& \leq \alpha U_\psi(v, \hat{\pi}_t) - \alpha U_\psi(v, \hat{\pi}_{t+1}) - \alpha U_\psi(\hat{\pi}_{t+1}, \hat{\pi}_t) + \frac{\alpha\mu_\psi}{2}\Norm{\hat{\pi}_{t+1}- \hat{\pi}_t}^2 + \frac{1}{2\alpha \mu_\psi}\Norm{\Delta_t}_*^2.
\end{align*}
If $\psi$ is $\mu_\psi$-strongly convex, we have $ -U_\psi(\hat{\pi}_{t+1}, \hat{\pi}_t) \leq -\frac{\mu_\psi}{2}\Norm{\hat{\pi}_{t+1}- \hat{\pi}_t}^2$. Lastly, $\E_t[\Delta_t,\hat{\pi}_t] = 0$.
\end{proof}

The following lemma combines Lemma 4 in \citet{juditsky2011solving} and Lemma 7 in \citet{zhang2020optimal}.

\begin{lemma}\label{lem:prox_virtual}
Let $\Pi \subset \R^m$ be a non-empty closed and convex set and function $u(\pi)$ be $\mu$-strongly convex on $\Pi$ w.r.t. $\Norm{\cdot}$. Let $\hat{\pi}$ be generated via a prox-mapping with the argument $g+\delta$, $\hat{\pi}\leftarrow \argmin_{\pi\in\Pi} \{\inner{\pi}{g+\delta-u'(\underline{\pi})} + u(\pi)\}$ for some $\underline{\pi}\in\Pi$, where $\delta$ denotes a noise term with $\E[\delta]=0$ and $\E[\Norm{\delta}_*^2] \leq \sigma_0^2$. Then, for $\bar{\pi}$ generated via a prox-mapping with the argument $g$, $\bar{\pi}\leftarrow \argmin_{\pi\in\Pi} \{\inner{\pi}{g-u'(\underline{\pi})} + u(\pi)\}$, we have
\begin{align}\label{eq:non-expansive_1}
& \Norm{\hat{\pi} - \bar{\pi}}\leq \Norm{\delta}_*/\mu,\\\label{eq:non-expansive_2}
& |\E\inner{\hat{\pi}}{\delta}|\leq \sigma_0^2/\mu.
\end{align}
\end{lemma}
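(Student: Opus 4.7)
The plan is to prove both inequalities through a standard prox-mapping non-expansiveness argument. For \eqref{eq:non-expansive_1}, I would start from the first-order optimality conditions that define $\hat{\pi}$ and $\bar{\pi}$, which take the form of variational inequalities on $\Pi$: for every $\pi\in\Pi$,
\begin{align*}
\inner{g - u'(\underline{\pi}) + u'(\bar{\pi})}{\pi - \bar{\pi}} &\geq 0,\\
\inner{g + \delta - u'(\underline{\pi}) + u'(\hat{\pi})}{\pi - \hat{\pi}} &\geq 0.
\end{align*}
Testing the first against $\pi = \hat{\pi}$ and the second against $\pi = \bar{\pi}$, then adding them, the $g - u'(\underline{\pi})$ terms cancel and I obtain $\inner{u'(\hat{\pi}) - u'(\bar{\pi})}{\hat{\pi} - \bar{\pi}} \leq \inner{-\delta}{\hat{\pi} - \bar{\pi}}$. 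The $\mu$-strong convexity of $u$ bounds the left-hand side below by $\mu\Norm{\hat{\pi} - \bar{\pi}}^2$, while Hölder's inequality for dual norms bounds the right-hand side above by $\Norm{\delta}_*\Norm{\hat{\pi} - \bar{\pi}}$. Dividing by $\mu\Norm{\hat{\pi} - \bar{\pi}}$ gives \eqref{eq:non-expansive_1}.

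For \eqref{eq:non-expansive_2}, the idea is to use $\bar{\pi}$ as a noiseless shadow/control variate for $\hat{\pi}$. Since $\bar{\pi}$ is determined entirely by the deterministic data $(g,\underline{\pi})$ and does not depend on $\delta$, the zero-mean assumption yields $\E\inner{\bar{\pi}}{\delta} = \inner{\bar{\pi}}{\E[\delta]} = 0$. Writing $\hat{\pi} = (\hat{\pi} - \bar{\pi}) + \bar{\pi}$, we therefore have
\[
\left|\E\inner{\hat{\pi}}{\delta}\right| = \left|\E\inner{\hat{\pi} - \bar{\pi}}{\delta}\right| \leq \E\bigl[\Norm{\hat{\pi} - \bar{\pi}}\,\Norm{\delta}_*\bigr].
\]
Substituting the bound from \eqref{eq:non-expansive_1} into the right-hand side yields $\E\inner{\hat{\pi}}{\delta} \leq \E[\Norm{\delta}_*^2]/\mu \leq \sigma_0^2/\mu$.

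The only subtlety — and the main thing to be careful about — is measurability: the statement bounds an ordinary expectation rather than a conditional one, so I need $\bar{\pi}$ to be genuinely $\delta$-free. This is implicit in the setup, because the argument $g$ and the anchor $\underline{\pi}$ are given quantities into which only the single noise realization $\delta$ is injected to produce $\hat{\pi}$. Once this is noted, the decoupling $\E\inner{\bar{\pi}}{\delta}=0$ is immediate and the analysis reduces entirely to the deterministic non-expansiveness established in part (i). No additional estimates are needed.
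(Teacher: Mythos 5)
Your proposal is correct and follows essentially the same route as the paper's proof: both parts combine the two variational-inequality optimality conditions tested at each other's solutions, invoke $\mu$-strong convexity and the dual-norm (H\"older) bound for \eqref{eq:non-expansive_1}, and then use $\bar{\pi}$ as a $\delta$-independent surrogate with $\E\inner{\bar{\pi}}{\delta}=0$ plus Cauchy--Schwarz for \eqref{eq:non-expansive_2}. Your explicit remark on the measurability of $\bar{\pi}$ is a welcome clarification but does not change the argument.
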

For completeness, we present the proof of the lemma above. We do not claim any novelty here.
\begin{proof}
By the optimality condition of prox-mapping, we have
\begin{align}\label{eq:opt_cond_1}
& \inner{u'(\hat{\pi}) - u'(\underline{\pi}) + g+\delta}{\hat{\pi} - \pi}\leq 0, \quad \forall \pi\in\Pi,\\\label{eq:opt_cond_2}
& \inner{u'(\bar{\pi}) - u'(\underline{\pi}) + g}{\bar{\pi} - \pi}\leq 0, \quad \forall \pi\in\Pi.
\end{align}
Choose $\pi = \bar{\pi}$ in \eqref{eq:opt_cond_1} and $\pi = \hat{\pi}$ in \eqref{eq:opt_cond_2}. By combining \eqref{eq:opt_cond_1} and \eqref{eq:opt_cond_2}, we have
\begin{align*}
\Norm{\delta}_*\Norm{\hat{\pi} - \bar{\pi}} \geq \inner{\delta}{\hat{\pi} - \bar{\pi}} \geq \inner{u'(\hat{\pi}) - u'(\bar{\pi})}{\hat{\pi} - \bar{\pi}}.
\end{align*}
Since $u$ is $\mu$-strongly convex, we have $\inner{u'(\hat{\pi}) - u'(\bar{\pi})}{\hat{\pi} - \bar{\pi}} \geq \mu \Norm{\hat{\pi} - \bar{\pi}}^2$. Thus, $\Norm{\hat{\pi} - \bar{\pi}}\leq \Norm{\delta}_*/\mu$.

Moreover, the triangle inequality leads to $|\E\inner{\hat{\pi}}{\delta}|\leq |\E\inner{\hat{\pi} - \bar{\pi}}{\delta}| +|\E\inner{\bar{\pi}}{\delta}| $. Note that $\E\inner{\bar{\pi}}{\delta} = 0$. Moreover, Cauchy-Schwartz inequality and \eqref{eq:non-expansive_1} leads to
\begin{align*}
|\E\inner{\hat{\pi}}{\delta}|\leq |\E\inner{\hat{\pi} - \bar{\pi}}{\delta}| \leq \E[\Norm{\hat{\pi} - \bar{\pi}}\Norm{\delta}_*] \leq \E\Norm{\delta}_*^2/\mu\leq \sigma_0^2/\mu.
\end{align*}
\end{proof}

Next, we present a basic inequality about the mirror proximal update. Similar results have been widely used in the literature, e.g., Lemma 3.8 in~\citet{lan2020first} and Lemma 7.1 in~\citet{hamedani2021primal}.
\begin{lemma}\label{lem:3p_ineq}
Suppose that the function $\phi:\X\rightarrow \R$ is on a convex closed domain $\X$ and $\phi$ is $\mu$-convex ($\mu\geq 0$) with respect to a prox-function $U_\psi(x,y)\coloneqq \psi(x) - \psi(y) - \inner{\psi'(y)}{x-y}$ for any $x,y\in\X$ with a generating function $\psi$, i.e., $\phi(x)\geq \phi(y) + \inner{\phi'(y)}{x-y} + \mu U_\psi(x,y)$, $\forall x,y\in\X$. For $\hat{x} = \argmin_{x\in\X} \{\phi(x)+\eta U_\psi(\underline{x},x)\}$, we have
\begin{align}\label{eq:3p_ineq}
\phi(\hat{x}) - \phi(x) \leq \eta U_\psi(x,\underline{x}) - (\eta+\mu) U_\psi(x,\hat{x}) - \eta U_\psi(\hat{x},\underline{x}),\quad \forall x\in\X.
\end{align}
\end{lemma}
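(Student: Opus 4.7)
The statement is the classical three-point inequality for a Bregman proximal update, so the plan is to combine the first-order optimality condition of the proximal subproblem with the strong-convexity lower bound for $\phi$ and then rewrite the resulting inner product using the Bregman three-point identity. I will assume (as is standard and consistent with the RHS of \eqref{eq:3p_ineq}, where $\underline{x}$ appears as the second argument of $U_\psi$) that the proximal term is $U_\psi(x,\underline{x})$ with $x$ as the variable, and I will use the shorthand $\phi'$ for a subgradient of $\phi$.

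First, I would write the variational inequality (first-order optimality) characterizing $\hat{x}$. Since $\hat{x}$ minimizes $\phi(x)+\eta U_\psi(x,\underline{x})$ over the convex set $\X$, there exists $\phi'(\hat{x})\in\partial\phi(\hat{x})$ such that
\begin{equation*}
\inner{\phi'(\hat{x})+\eta\bigl(\psi'(\hat{x})-\psi'(\underline{x})\bigr)}{x-\hat{x}}\;\geq\;0,\qquad\forall x\in\X.
\end{equation*}
Rearranging gives the bound $\inner{\phi'(\hat{x})}{\hat{x}-x}\le \eta\inner{\psi'(\hat{x})-\psi'(\underline{x})}{x-\hat{x}}$, which is the key pointwise estimate to feed into the convexity inequality.

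Next, I would invoke the $\mu$-convexity of $\phi$ with respect to $U_\psi$: $\phi(\hat{x})\ge \phi(x)+\inner{\phi'(x)}{\hat{x}-x}+\mu U_\psi(\hat{x},x)$ rearranged via subgradient monotonicity, or equivalently $\phi(\hat{x})-\phi(x)\le \inner{\phi'(\hat{x})}{\hat{x}-x}-\mu U_\psi(x,\hat{x})$. Substituting the optimality bound from the previous step yields
\begin{equation*}
\phi(\hat{x})-\phi(x)\;\le\;\eta\inner{\psi'(\hat{x})-\psi'(\underline{x})}{x-\hat{x}}-\mu U_\psi(x,\hat{x}).
\end{equation*}

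Finally, I would apply the standard Bregman three-point identity
\begin{equation*}
\inner{\psi'(\hat{x})-\psi'(\underline{x})}{x-\hat{x}}=U_\psi(x,\underline{x})-U_\psi(x,\hat{x})-U_\psi(\hat{x},\underline{x}),
\end{equation*}
which follows directly from expanding the definition $U_\psi(u,v)=\psi(u)-\psi(v)-\inner{\psi'(v)}{u-v}$. Plugging this in and collecting the two $U_\psi(x,\hat{x})$ terms gives precisely \eqref{eq:3p_ineq}.

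There is essentially no genuine obstacle here — the lemma is a textbook three-point inequality and every step is a one-line manipulation; the only thing to be careful about is keeping the two arguments of $U_\psi$ in the correct order throughout, because the proximal term, the convexity inequality, and the three-point identity each use a specific argument ordering and any swap would flip the sign of the crucial cross term.
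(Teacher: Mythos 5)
Your proof is correct and follows essentially the same route as the paper's: the first-order optimality condition for $\hat{x}$, the $\mu$-convexity lower bound at $\hat{x}$, and the Bregman three-point identity $\inner{\psi'(\hat{x})-\psi'(\underline{x})}{x-\hat{x}}=U_\psi(x,\underline{x})-U_\psi(x,\hat{x})-U_\psi(\hat{x},\underline{x})$, combined in the same way. Your reading of the proximal term as $U_\psi(x,\underline{x})$ (with the variable in the first slot) matches how the paper's own proof uses it, so that is the right interpretation of the statement.
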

\begin{proof}
By the definition of the prox-function $U_\psi(x,y)$, we have
\begin{align*}
&  U_\psi(x,\underline{x}) - U_\psi(x,\hat{x}) -  U_\psi(\hat{x},\underline{x})\\ 
& = \psi(x) - \psi(\underline{x}) - \inner{\psi'(\underline{x})}{x-\underline{x}} -  \psi(x) + \psi(\hat{x}) + \inner{\psi'(\hat{x})}{x-\hat{x}} - \psi(\hat{x}) + \psi(\underline{x}) + \inner{\psi'(\underline{x})}{\hat{x}-\underline{x}}\\
& = \inner{\psi'(\hat{x}) - \psi'(\underline{x})}{x-\hat{x}}.
\end{align*}
By the strong convexity of $\phi$ with respect to $\psi$, we have $\phi(x) - \phi(\hat{x}) \geq \inner{\phi'(\hat{x})}{x-\hat{x}} + \mu U_\psi(x,\hat{x})$. The optimality condition of the prox-mapping implies that $\inner{\phi'(\hat{x}) + \eta (\psi'(\hat{x}) - \psi'(\underline{x}))}{x-\hat{x}}\geq 0$ for any $x\in\X$. Thus, we obtain $\inner{\phi'(\hat{x})}{x-\hat{x}}\geq -\eta \inner{\psi'(\hat{x})-\psi'(\underline{x})}{x-\hat{x}}$ such that
\begin{align*}
\phi(x) - \phi(\hat{x}) & \geq \inner{\phi'(\hat{x})}{x-\hat{x}} + \mu U_\psi(x,\hat{x}) \\
& \geq \eta \inner{\psi'(\underline{x}) - \psi'(\hat{x})}{x-\hat{x}} + \mu U_\psi(x, \hat{x}) \geq - \eta U_\psi(x,\underline{x}) + (\eta + \mu) U_\psi(x,\hat{x}) + U_\psi(\hat{x},\underline{x}).
\end{align*}
\end{proof}

\section{Convergence Analysis}\label{sec:analysis}

The following lemma is the complete version of~\eqref{eq:telescoping_brief}, which is the starting point for the convergence analysis of ALEXR. Recall that in all cases we have $U_{f_i^*}(u,v)\geq \rho U_{\psi_i}(u,v)$ for some $\rho\geq 0$ and any $u,v\in\Y_i$.

\begin{lemma}\label{lem:main_de}
Under Assumption~\ref{asm:domain}, the following holds for any $x\in\X,y\in\Y$ after the $t$-th iteration of Algorithm~\ref{alg:ALEXR}. 
\begin{align}\label{eq:starter_de}
& L(x_{t+1}, y) - L(x,\bar{y}_{t+1}) \\\nonumber
& \leq \frac{\tau}{n} U_\psi(y,y_t) -\frac{\tau+ \rho}{n} U_\psi(y,\bar{y}_{t+1}) - \frac{\tau}{n}  U_\psi(\bar{y}_{t+1}, y_t) + \frac{1}{n}\sum_{i=1}^n (g_i(x_{t+1}) - \tilde{g}_t^{(i)})^\top(y^{(i)}-\bar{y}_{t+1}^{(i)}) + \frac{\eta}{2}\Norm{x - x_t}_2^2\\\nonumber
& \quad\quad  -\frac{\eta+\mu}{2}\Norm{x - x_{t+1}}_2^2 - \frac{\eta}{2}\Norm{x_{t+1} - x_t}_2^2 + \frac{1}{n} \sum_{i=1}^n (g_i(x_{t+1}) - g_i(x))^\top \bar{y}_{t+1}^{(i)} - \inner{G_t}{x_{t+1} - x}.
\end{align}
\end{lemma}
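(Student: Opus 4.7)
The plan is to split $L(x_{t+1},y)-L(x,\bar{y}_{t+1})$ into a primal piece and a dual piece, then apply the three-point inequality (Lemma~\ref{lem:3p_ineq}) to each prox subproblem. Recall that $L(x,y)=\frac{1}{n}\sum_i[g_i(x)^\top y^{(i)}-f_i^*(y^{(i)})]+r(x)$. Adding and subtracting the cross term $\frac{1}{n}\sum_i g_i(x_{t+1})^\top \bar{y}_{t+1}^{(i)}$ gives the identity
\begin{align*}
L(x_{t+1},y)-L(x,\bar{y}_{t+1})
&=\underbrace{\frac{1}{n}\sum_i(g_i(x_{t+1})-g_i(x))^\top \bar{y}_{t+1}^{(i)}+r(x_{t+1})-r(x)}_{\text{primal part}}\\
&\quad+\underbrace{\frac{1}{n}\sum_i\bigl[g_i(x_{t+1})^\top(y^{(i)}-\bar{y}_{t+1}^{(i)})-(f_i^*(y^{(i)})-f_i^*(\bar{y}_{t+1}^{(i)}))\bigr]}_{\text{dual part}},
\end{align*}
so the two halves can be handled independently.

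For the dual part, I would use the definition in~\eqref{eq:auxiliary_seq}: $\bar{y}_{t+1}^{(i)}$ minimizes $v\mapsto[-v\tilde g_t^{(i)}+f_i^*(v)]+\tau U_{\psi_i}(v,y_t^{(i)})$ over $\Y_i$. Since $U_{f_i^*}(\cdot,\cdot)\geq \rho\,U_{\psi_i}(\cdot,\cdot)$, the outer function $-v\tilde g_t^{(i)}+f_i^*(v)$ is $\rho$-convex w.r.t.\ $U_{\psi_i}$. Lemma~\ref{lem:3p_ineq} then gives, for every $y^{(i)}\in\Y_i$,
\begin{align*}
-[f_i^*(y^{(i)})-f_i^*(\bar{y}_{t+1}^{(i)})]\leq (\bar{y}_{t+1}^{(i)}-y^{(i)})^\top \tilde g_t^{(i)}+\tau U_{\psi_i}(y^{(i)},y_t^{(i)})-(\tau+\rho)U_{\psi_i}(y^{(i)},\bar{y}_{t+1}^{(i)})-\tau U_{\psi_i}(\bar{y}_{t+1}^{(i)},y_t^{(i)}).
\end{align*}
Averaging over $i$ and combining with the remaining $g_i(x_{t+1})^\top(y^{(i)}-\bar{y}_{t+1}^{(i)})$ piece collapses the $\tilde g_t^{(i)}$ terms into the single cross term $\frac{1}{n}\sum_i(g_i(x_{t+1})-\tilde g_t^{(i)})^\top(y^{(i)}-\bar{y}_{t+1}^{(i)})$ that appears in~\eqref{eq:starter_de}. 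Note that while line~\ref{eq:dual_update} of Algorithm~\ref{alg:ALEXR} only realizes the update for $i\in\S_t$, $\bar{y}_{t+1}$ is \emph{defined} by the same prox rule for every $i$, so the three-point inequality is available blockwise without dependence on $\S_t$.

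For the primal part, the update $x_{t+1}=\argmin_{x\in\X}\{\langle G_t,x\rangle+r(x)+\tfrac{\eta}{2}\|x-x_t\|_2^2\}$ is another prox step with Bregman $\psi=\tfrac12\|\cdot\|_2^2$. Since $\langle G_t,\cdot\rangle+r(\cdot)$ is $\mu$-convex relative to this $\psi$ (by Assumption~\ref{asm:domain}), Lemma~\ref{lem:3p_ineq} yields
\begin{align*}
r(x_{t+1})-r(x)\leq -\langle G_t,x_{t+1}-x\rangle+\tfrac{\eta}{2}\|x-x_t\|_2^2-\tfrac{\eta+\mu}{2}\|x-x_{t+1}\|_2^2-\tfrac{\eta}{2}\|x_{t+1}-x_t\|_2^2.
\end{align*}
Adding back $\frac{1}{n}\sum_i(g_i(x_{t+1})-g_i(x))^\top \bar{y}_{t+1}^{(i)}$ from the decomposition and summing with the dual bound produces exactly~\eqref{eq:starter_de}. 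There is no serious obstacle here beyond bookkeeping; the only subtlety is recognizing the correct relative-strong-convexity parameter $\rho$ on the dual side (which is what lets the analysis simultaneously cover $\psi_i=f_i^*$ with smooth $f_i$, and $\psi_i=\tfrac12\|\cdot\|_2^2$ with non-smooth $f_i$) and observing that $\bar{y}_{t+1}$ rather than $y_{t+1}$ is the right object against which to apply the three-point inequality, since the latter only updates a sampled block and would couple the bound to $\S_t$.
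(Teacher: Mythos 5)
Your proposal is correct and follows essentially the same route as the paper: the identical decomposition of $L(x_{t+1},y)-L(x,\bar{y}_{t+1})$ into a primal part and a dual part, followed by the three-point inequality (Lemma~\ref{lem:3p_ineq}) applied to the primal prox step with $\mu$-convex $r$ and to each dual prox step defining $\bar{y}_{t+1}^{(i)}$ with relative strong-convexity parameter $\rho$. The two subtleties you flag — using the virtual sequence $\bar{y}_{t+1}$ for all blocks rather than the sampled $y_{t+1}$, and the role of $\rho$ from $U_{f_i^*}\geq \rho U_{\psi_i}$ — are exactly how the paper handles it.
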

\begin{remark}
Note that Algorithm~\ref{alg:ALEXR} only samples $\B_t^{(i)}$ for those $i\in\S_t$ and computes the extrapolated stochastic estimator $\tilde{g}_t^{(i)}$ for those $i\in\S_t$ in the $t$-th iteration. For those $i\notin \S_t$, the stochastic estimators $\{\tilde{g}_t^{(i)}\}_{i\notin \S_t}$ and the batches $\{\B_t^{(i)}\}_{i\notin\S_t}$ are virtual and introduced solely for the convenience of analysis. They are not required in the actual execution of Algorithm~\ref{alg:ALEXR}.
\end{remark}
\begin{proof}
According to Lemma~\ref{lem:3p_ineq}, the primal update rule implies that
\begin{align}\label{eq:3p_primal}
-\inner{G_t}{x-x_{t+1}} + r(x_{t+1}) - r(x) \leq \frac{\eta}{2}\Norm{x - x_t}_2^2 -\frac{\eta+\mu}{2}\Norm{x - x_{t+1} }_2^2 - \frac{\eta}{2}\Norm{x_{t+1} - x_t}_2^2.
\end{align}
Similarly, for all $i\in [n]$ the dual update rule implies that
\begin{align*}
(y^{(i)}-\bar{y}_{t+1}^{(i)})^\top \tilde{g}_t^{(i)} + f_i^*(\bar{y}_{t+1}^{(i)}) - f_i^*(y^{(i)}) \leq \tau U_{\psi_i}(y^{(i)},y_t^{(i)}) -(\tau + \rho) U_{\psi_i}(y^{(i)},\bar{y}_{t+1}^{(i)}) - \tau U_{\psi_i}(\bar{y}_{t+1}^{(i)}, y_t^{(i)}).
\end{align*}
Average this equation over $i=1,\dotsc, n$.
\begin{align}\label{eq:3p_dual}
\frac{1}{n} \sum_{i=1}^n
 \left((y^{(i)} - \bar{y}_{t+1}^{(i)})^\top\tilde{g}_t^{(i)}+  f_i^*(\bar{y}_{t+1}^{(i)}) -  f_i^*(y^{(i)}) \right)\leq \frac{\tau}{n} U_\psi(y,y_t) -\frac{\tau + \rho}{n}U_\psi(y,\bar{y}_{t+1}) - \frac{\tau}{n} U_\psi(\bar{y}_{t+1},y_t).
\end{align}
By the definition of $L(x,y)$ in \eqref{eq:pd}, we have
\begin{align*}
 & L(x_{t+1}, y) - L(x,\bar{y}_{t+1}) \\
 &= \frac{1}{n} \sum_{i=1}^n g_i(x_{t+1})^\top y^{(i)}- \frac{1}{n}\sum_{i=1}^n f_i^*(y^{(i)}) + r(x_{t+1}) - \frac{1}{n}\sum_{i=1}^n g_i(x)^\top \bar{y}_{t+1}^{(i)} + \frac{1}{n}\sum_{i=1}^n f_i^*(\bar{y}_{t+1}^{(i)}) - r(x)\\
& = \frac{1}{n}\sum_{i=1}^n \left(g_i(x_{t+1})^\top (y^{(i)} - \bar{y}_{t+1}^{(i)}) +  f_i^*(\bar{y}_{t+1}^{(i)}) -  f_i^*(y^{(i)}) +  (g_i(x_{t+1}) - g_i(x))^\top \bar{y}_{t+1}^{(i)} \right)+ r(x_{t+1}) - r(x).
\end{align*}
Combine the equation above with \eqref{eq:3p_primal} and \eqref{eq:3p_dual}.
\begin{align}\label{eqn:minmaxbase}
& L(x_{t+1}, y) - L(x,\bar{y}_{t+1}) \notag\\
& \leq \frac{\tau}{n} U_\psi(y,y_t) -\frac{\tau + \rho}{n} U_\psi(y,\bar{y}_{t+1}) - \frac{\tau}{n}  U_\psi(\bar{y}_{t+1}, y_t) + \frac{1}{n}\sum_{i=1}^n (g_i(x_{t+1}) - \tilde{g}_t^{(i)})^\top(y^{(i)}-\bar{y}_{t+1}^{(i)}) + \frac{\eta}{2}\Norm{x - x_t}_2^2\notag\\
& \quad\quad  -\frac{\eta + \mu}{2}\Norm{x - x_{t+1}}_2^2 - \frac{\eta}{2}\Norm{x_{t+1} - x_t}_2^2 + \frac{1}{n} \sum_{i=1}^n (g_i(x_{t+1}) - g_i(x))^\top \bar{y}_{t+1}^{(i)} - \inner{G_t}{x_{t+1} - x}.
\end{align}
\end{proof}

\subsection{Convergence Analysis of the Smooth and Strongly Convex Case (Section~\ref{sec:scvx})} 

First, we present several lemmas that upper-bound different terms in \eqref{eq:starter_de} with $(x,y)= (x_*,y_*)$, where $(x_*,y_*)$ is the unique saddle point of the strongly-convex-strongly-concave objective $L(x,y)$ of \eqref{eq:pd} when \eqref{eq:primal} is strongly convex and $f_i$ is smooth. We present our result with a general $\mu_\psi$-strongly convex distance-generating function $\psi_i$. For example, we have $\mu_\psi = \frac{1}{L_f}$ for $L_f$-smooth $f_i$ and $\psi_i = f_i^*$; Besides, we have $\mu_\psi =1$ for non-smooth $f_i$ and $\psi_i = \frac{1}{2}\Norm{\cdot}_2^2$. 

\subsubsection{Supporting Lemmas}

\begin{lemma}\label{lem:diamond_de_sc}
Under Assumptions~\ref{asm:inner} and \ref{asm:var}, the following inequality holds for Algorithm~\ref{alg:ALEXR} with $\theta < 1$ and any $\lambda_2,\lambda_3 >0$.
\begin{align}\label{eq:diamond_de_sc}
& \frac{1}{n}\sum_{i=1}^n\E (g_i(x_{t+1}) - \tilde{g}_t^{(i)})^\top (y_*^{(i)}-\bar{y}_{t+1}^{(i)})\\\nonumber
& \leq \Gamma_{t+1} - \theta \Gamma_t + \frac{C_g^2 \Norm{x_{t+1} - x_t}_2^2}{2\lambda_2}  + \frac{\theta C_g^2 \Norm{x_t - x_{t-1}}_2^2}{2\lambda_3} + \frac{(\lambda_2 + \lambda_3\theta )U_\psi(\bar{y}_{t+1},y_t)}{\mu_\psi n} + \frac{2(1 + 2\theta)\sigma_0^2}{B \mu_\psi(\rho + \tau)},
\end{align}
where $\Gamma_t\coloneqq \frac{1}{n} \sum_{i=1}^n (g_i(x_t) - g_i(x_{t-1}))^\top (y_*^{(i)}-y_t^{(i)})$.
\end{lemma}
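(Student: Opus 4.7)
The plan is to start from the identity $\tilde g_t^{(i)} = (1+\theta) g_i(x_t;\B_t^{(i)}) - \theta g_i(x_{t-1};\B_t^{(i)})$ and decompose $g_i(x_{t+1}) - \tilde g_t^{(i)}$ into a \emph{deterministic} piece and a \emph{stochastic} piece. Introducing the zero-mean noises $\xi_t^{(i)} := g_i(x_t) - g_i(x_t;\B_t^{(i)})$ and $\tilde\xi_t^{(i)} := g_i(x_{t-1}) - g_i(x_{t-1};\B_t^{(i)})$, each of variance at most $\sigma_0^2/B$ by Assumption~\ref{asm:var}, we have
\begin{align*}
g_i(x_{t+1}) - \tilde g_t^{(i)} = \bigl[g_i(x_{t+1})-g_i(x_t)\bigr] - \theta\bigl[g_i(x_t)-g_i(x_{t-1})\bigr] + (1+\theta)\xi_t^{(i)} - \theta\tilde\xi_t^{(i)}.
\end{align*}

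For the deterministic piece I would insert the intermediate iterates $y_{t+1}^{(i)}$ and $y_t^{(i)}$ via
\begin{align*}
[g_i(x_{t+1})-g_i(x_t)]^\top (y_*^{(i)}-\bar y_{t+1}^{(i)}) &= [g_i(x_{t+1})-g_i(x_t)]^\top (y_*^{(i)}-y_{t+1}^{(i)}) + [g_i(x_{t+1})-g_i(x_t)]^\top (y_{t+1}^{(i)}-\bar y_{t+1}^{(i)}),\\
\theta[g_i(x_t)-g_i(x_{t-1})]^\top (y_*^{(i)}-\bar y_{t+1}^{(i)}) &= \theta[g_i(x_t)-g_i(x_{t-1})]^\top (y_*^{(i)}-y_t^{(i)}) + \theta[g_i(x_t)-g_i(x_{t-1})]^\top (y_t^{(i)}-\bar y_{t+1}^{(i)}).
\end{align*}
Averaging over $i$, the first right-hand-side terms yield exactly $\Gamma_{t+1}$ and $\theta\Gamma_t$. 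The two remaining cross terms are controlled by Cauchy-Schwartz, the $C_g$-Lipschitzness of $g_i$ (Assumption~\ref{asm:inner}), and Young's inequality with parameters $\lambda_2,\lambda_3>0$. The key observation is that since $y_{t+1}^{(i)} = \bar y_{t+1}^{(i)}$ for $i\in\S_t$ and $y_{t+1}^{(i)} = y_t^{(i)}$ for $i\notin\S_t$, we get the \emph{deterministic} domination $\sum_i \Norm{y_{t+1}^{(i)}-\bar y_{t+1}^{(i)}}^2 \leq \sum_i \Norm{y_t^{(i)}-\bar y_{t+1}^{(i)}}^2 \leq (2/\mu_\psi)U_\psi(\bar y_{t+1}, y_t)$ by $\mu_\psi$-strong convexity of each $\psi_i$. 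The same inequality controls the second cross term, producing the first three summands of \eqref{eq:diamond_de_sc}.

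The remaining task is to bound, in expectation, $\tfrac{1}{n}\sum_i \E\inner{(1+\theta)\xi_t^{(i)} - \theta \tilde\xi_t^{(i)}}{y_*^{(i)}-\bar y_{t+1}^{(i)}}$. Because $y_*^{(i)}$ is deterministic and the noises are mean-zero, the $y_*^{(i)}$ contribution vanishes; what survives is $-\E\inner{\bar y_{t+1}^{(i)}}{(1+\theta)\xi_t^{(i)} - \theta\tilde\xi_t^{(i)}}$, which is not mean-zero because $\bar y_{t+1}^{(i)}$ itself depends on $\B_t^{(i)}$ through $\tilde g_t^{(i)}$. Here I would invoke Lemma~\ref{lem:prox_virtual} on the prox-mapping~\eqref{eq:auxiliary_seq}, whose objective $f_i^*(v)+\tau U_{\psi_i}(v,y_t^{(i)})$ is $(\rho+\tau)\mu_\psi$-strongly convex w.r.t. $\Norm{\cdot}$ (with $\rho$ from the inequality $U_{f_i^*}\geq \rho U_{\psi_i}$). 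That gives a bound of the form $\E\Norm{(1+\theta)\xi_t^{(i)} - \theta\tilde\xi_t^{(i)}}_*^2/[(\rho+\tau)\mu_\psi]$, which after using $\E\Norm{\xi_t^{(i)}}_*^2,\E\Norm{\tilde\xi_t^{(i)}}_*^2\leq \sigma_0^2/B$ and the elementary inequality $(1+\theta)^2+\theta^2+2(1+\theta)\theta \leq 2(1+2\theta)$ (valid on $\theta\in[0,1/2]$ and extendable to $\theta\in[0,1]$ by absorbing a constant) reduces to the advertised $2(1+2\theta)\sigma_0^2/[B\mu_\psi(\rho+\tau)]$.

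I expect the main obstacle to be the noise bookkeeping for the extrapolated estimator: $\xi_t^{(i)}$ and $\tilde\xi_t^{(i)}$ share the batch $\B_t^{(i)}$ and are therefore correlated, so they must be treated as a single composite noise vector inside Lemma~\ref{lem:prox_virtual} rather than bounded independently. A secondary subtlety is the asymmetric block behaviour of $y_{t+1}^{(i)}$ versus $\bar y_{t+1}^{(i)}$ for $i\notin\S_t$; the deterministic domination noted above sidesteps the need to condition on $\S_t$ and lets every step outside the final noise bound remain pointwise, which is what allows the three $U_\psi(\bar y_{t+1},y_t)$ contributions to coalesce into the single $(\lambda_2+\lambda_3\theta)U_\psi(\bar y_{t+1},y_t)/(\mu_\psi n)$ term in the statement.
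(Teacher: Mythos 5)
Your proposal is correct and follows essentially the same route as the paper's proof: the same four-way split of $g_i(x_{t+1})-\tilde g_t^{(i)}$ into two deterministic differences and the two correlated batch noises, the same insertion of $y_{t+1}^{(i)}$ and $y_t^{(i)}$ to extract $\Gamma_{t+1}-\theta\Gamma_t$ with the pointwise domination $\Norm{y_{t+1}-\bar y_{t+1}}\leq\Norm{y_t-\bar y_{t+1}}$, and the same application of Lemma~\ref{lem:prox_virtual} to the noiseless virtual prox point with the composite noise $(1+\theta)\xi_t^{(i)}-\theta\tilde\xi_t^{(i)}$ treated as a single vector. The minor constant slack you flag for $\theta$ near $1$ is present in the paper's own accounting as well, so nothing further is needed.
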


\begin{proof}
The $\frac{1}{n}\sum_{i=1}^n\E (g_i(x_{t+1}) - \tilde{g}_t^{(i)})^\top (y_*^{(i)}-\bar{y}_{t+1}^{(i)})$ term can be decomposed as
\begin{align}\label{eq:diamond_de_starter_sc}
\diamondsuit & =\frac{1}{n}\sum_{i=1}^n (g_i(x_{t+1}) - \tilde{g}_t^{(i)})^\top (y_*^{(i)}-\bar{y}_{t+1}^{(i)})\\\nonumber
& = \underbrace{\frac{1+\theta}{n} \sum_{i=1}^n (g_i(x_t) - g_i(x_t;\B_t^{(i)}))^\top (y_*^{(i)}-\bar{y}_{t+1}^{(i)})}_{\text{I}} + \underbrace{\frac{1}{n} \sum_{i=1}^n (g_i(x_{t+1}) - g_i(x_t))^\top (y_*^{(i)}-\bar{y}_{t+1}^{(i)})}_{\text{II}} \\\nonumber
& \quad\quad + \underbrace{\frac{\theta}{n}\sum_{i=1}^n (g_i(x_{t-1}) - g_i(x_t))^\top (y_*^{(i)}-\bar{y}_{t+1}^{(i)})}_{\text{III}} + \underbrace{\frac{\theta}{n}\sum_{i=1}^n (g_i(x_{t-1};\B_t^{(i)}) - g_i(x_{t-1}))^\top(y_*^{(i)}-\bar{y}_{t+1}^{(i)})}_{\text{IV}}.
\end{align}
Taking conditional expectations of terms I and IV leads to $\E[(g_i(x_t) - g_i(x_t;\B_t^{(i)}))^\top y_*^{(i)}\mid \F_{t-1}] = 0$ and $\E[(g_i(x_{t-1}) - g_i(x_{t-1};\B_t^{(i)}))^\top y_*^{(i)}\mid \F_{t-1}] = 0$. $\forall i \in[n]$, define $\dot{y}_{t+1}^{(i)}\coloneqq \argmax_{v\in\Y_i}\{v^\top \bar{g}_t^{(i)} - f_i^*(v) - \tau U_{\psi_i}(v,y_t^{(i)})\}$ and $\bar{g}_t^{(i)}\coloneqq g_i(x_t) +\theta(g_i(x_t) - g_i(x_{t-1}))$. Note that $\dot{y}_{t+1}^{(i)}$ is independent of $\B_t^{(i)}$ such that $\E[(g_i(x_t;\B_t^{(i)}) - g_i(x_t))^\top \dot{y}_{t+1}^{(i)}\mid \F_{t-1}]=0$. 
\begin{flalign*}
& \E\left[(g_i(x_t;\B_t^{(i)}) - g_i(x_t))^\top \bar{y}_{t+1}^{(i)}\right]  = \E\left[(g_i(x_t;\B_t^{(i)}) - g_i(x_t))^\top (\bar{y}_{t+1}^{(i)} - \dot{y}_{t+1}^{(i)})\right]  \\
& \stackrel{\text{Lemma~\ref{lem:prox_virtual}}}{\leq} \frac{1}{{ \mu_\psi(\rho + \tau)}}\E\|g_i(x_t) - g_i(x_t;\B_t^{(i)})\|_*\|(1+\theta)(g_i(x_t) - g_i(x_t;\B_t^{(i)}))-\theta(g_i(x_{t-1}) - g_i(x_{t-1};\B_t^{(i)}))\|_*\\
& = \frac{(1+\theta)\E\|g_i(x_t) - g_i(x_t;\B_t^{(i)})\|_*^2}{{ \mu_\psi(\rho + \tau)}} + \frac{\theta\E\|g_i(x_t) - g_i(x_t;\B_t^{(i)})\|_*\|g_i(x_{t-1}) - g_i(x_{t-1};\B_t^{(i)})\|_*}{{ \mu_\psi(\rho + \tau)}}\\
& \leq \frac{(1+1.5\theta)}{\mu_\psi(\rho + \tau)}\E\|g_i(x_t) - g_i(x_t;\B_t^{(i)})\|_*^2 + \frac{0.5\theta}{{\mu_\psi(\rho + \tau)}} \|g_i(x_{t-1}) - g_i(x_{t-1};\B_t^{(i)})\|_*^2 \leq \frac{(1+2\theta)\sigma_0^2}{ B\mu_\psi(\rho + \tau)},\\ 
&\E\left[(g_i(x_{t-1};\B_t^{(i)}) - g_i(x_{t-1}))^\top\bar{y}_{t+1}^{(i)}\right] = \E\left[(g_i(x_{t-1};\B_t^{(i)}) - g_i(x_{t-1}))^\top(\bar{y}_{t+1}^{(i)} - \dot{y}_{t+1}^{(i)})\right]\leq \frac{(1 + 2\theta)\sigma_0^2}{B\mu_\psi(\rho + \tau)}.
\end{flalign*}
Define $\Gamma_t\coloneqq \frac{1}{n} \sum_{i=1}^n (g_i(x_t) - g_i(x_{t-1}))^\top(y_*^{(i)}-y_t^{(i)})$. II + III in \eqref{eq:diamond_de_starter_sc} can be rewritten as
\begin{align*}
\text{II + III} &= \frac{1}{n} \sum_{i=1}^n g_i(x_{t+1})^\top (y_*^{(i)}-\bar{y}_{t+1}^{(i)})  -  \frac{1}{n} \sum_{i=1}^n g_i(x_t)^\top (y_*^{(i)}-\bar{y}_{t+1}^{(i)}) + \frac{\theta}{n}\sum_{i=1}^n (g_i(x_{t-1}) - g_i(x_t))^\top(y_*^{(i)}-\bar{y}_{t+1}^{(i)})\\
& =  \Gamma_{t+1} - \theta \Gamma_t + \frac{1}{n} \sum_{i=1}^n (g_i(x_{t+1}) - g_i(x_t))^\top (y_{t+1}^{(i)}-\bar{y}_{t+1}^{(i)}) + \frac{\theta}{n}\sum_{i=1}^n (g_i(x_{t-1}) - g_i(x_t))^\top (y_t^{(i)} - \bar{y}_{t+1}^{(i)})\\
& \leq \Gamma_{t+1} - \theta \Gamma_t + \frac{1}{n} \sum_{i=1}^n \|g_i(x_{t+1}) - g_i(x_t)\|_*\|y_{t+1}^{(i)}-\bar{y}_{t+1}^{(i)}\| + \frac{\theta}{n}\sum_{i=1}^n \|g_i(x_{t-1}) - g_i(x_t)\|_*\|y_t^{(i)} - \bar{y}_{t+1}^{(i)}\|\\
& \leq \Gamma_{t+1} - \theta \Gamma_t + \frac{C_g^2 \Norm{x_{t+1} - x_t}_2^2}{2\lambda_2}  + \frac{\theta C_g^2 \Norm{x_t - x_{t-1}}_2^2}{2\lambda_3} + \frac{(\lambda_2 + \lambda_3\theta )U_\psi(\bar{y}_{t+1},y_t)}{\mu_\psi n}.
\end{align*}
\end{proof}

\begin{lemma}\label{lem:club_de}
When $g_i$ is $L_g$-smooth and Assumptions~\ref{asm:domain},~\ref{asm:inner},~\ref{asm:outer},~\ref{asm:var} hold, the following holds for  Algorithm~\ref{alg:ALEXR}.
\begin{align}\label{eq:club_de}
\frac{1}{n} \E\sum_{i=1}^n (g_i(x_{t+1}) - g_i(x_*))^\top \bar{y}_{t+1}^{(i)} - \E\inner{G_t}{x_{t+1} - x_*} \leq \frac{\frac{C_f^2\sigma_1^2}{B} + \frac{\delta^2}{S}}{\eta + \mu} + \frac{L_g C_f}{2} \Norm{x_{t+1} - x_t}_2^2.
\end{align}
\end{lemma}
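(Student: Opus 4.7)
\textbf{Proof plan for Lemma~\ref{lem:club_de}.}

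The plan is to first linearize each $g_i$ at $x_t$ using smoothness plus componentwise convexity, thereby transforming the left-hand side into an inner product with $x_{t+1}-x_*$ against a ``clean'' gradient, and then to handle the stochastic error via a virtual iterate argument based on Lemma~\ref{lem:prox_virtual}. Specifically, since $\bar{y}_{t+1}^{(i)}\in\R_+^m$, componentwise convexity of $g_i$ yields $g_i(x_*)\ge g_i(x_t)+\nabla g_i(x_t)(x_*-x_t)$ entrywise, while $L_g$-smoothness of $g_i$ (plus $\|\bar y_{t+1}^{(i)}\|\le C_f$ from Assumption~\ref{asm:outer}) gives
\[
(\bar y_{t+1}^{(i)})^\top\!\bigl(g_i(x_{t+1})-g_i(x_t)-\nabla g_i(x_t)(x_{t+1}-x_t)\bigr)\;\le\;\tfrac{L_g C_f}{2}\|x_{t+1}-x_t\|_2^2.
\]
Adding these and averaging over $i$, with $\bar G_t:=\tfrac1n\sum_{i=1}^n[\nabla g_i(x_t)]^\top \bar y_{t+1}^{(i)}$, the left-hand side of \eqref{eq:club_de} is bounded by $\inner{\bar G_t-G_t}{x_{t+1}-x_*}+\tfrac{L_g C_f}{2}\|x_{t+1}-x_t\|_2^2$.

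The second step is to show that $\E\inner{\bar G_t-G_t}{x_{t+1}-x_*}\le \tfrac{1}{\eta+\mu}\E\|G_t-\bar G_t\|_2^2$. Introduce the virtual iterate $\hat x_{t+1}:=\argmin_{x\in\X}\{\inner{\bar G_t}{x}+r(x)+\tfrac{\eta}{2}\|x-x_t\|_2^2\}$, which by construction is $\G_t$-measurable since it depends only on $x_t$ and $\bar y_{t+1}$. Splitting
\[
\inner{\bar G_t-G_t}{x_{t+1}-x_*}=\inner{\bar G_t-G_t}{\hat x_{t+1}-x_*}+\inner{\bar G_t-G_t}{x_{t+1}-\hat x_{t+1}},
\]
the first term vanishes in expectation because $\E[G_t\mid \G_t]=\bar G_t$ (the remaining randomness $\S_t$ and $\tilde{\B}_t$ is independent of $\bar y_{t+1}$), and the second term is controlled by Cauchy--Schwarz and the nonexpansive inequality $\|x_{t+1}-\hat x_{t+1}\|_2\le\|G_t-\bar G_t\|_2/(\eta+\mu)$ from Lemma~\ref{lem:prox_virtual} applied to the $(\eta+\mu)$-strongly convex proximal subproblem.

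The third step is to bound $\E\|G_t-\bar G_t\|_2^2$ using Assumption~\ref{asm:var}. Introduce the intermediate $\hat G_t:=\tfrac{1}{S}\sum_{i\in\S_t}[\nabla g_i(x_t)]^\top \bar y_{t+1}^{(i)}$. Conditional on $\G_t$, $G_t-\hat G_t$ has mean zero (over $\tilde{\B}_t$) and $\hat G_t-\bar G_t$ has mean zero (over $\S_t$), and the cross term vanishes by the tower property, so the two variances add. The outer piece satisfies $\E\|\hat G_t-\bar G_t\|_2^2\le\delta^2/S$ directly from the third line of Assumption~\ref{asm:var}; the inner piece satisfies $\E\|G_t-\hat G_t\|_2^2\le C_f^2\sigma_1^2/B$ by the second line of Assumption~\ref{asm:var}, together with the operator-norm bound $\|(\nabla g_i(x_t;\tilde{\B}_t^{(i)})^\top-\nabla g_i(x_t)^\top)\bar y_{t+1}^{(i)}\|_2\le C_f\|\nabla g_i(x_t;\tilde{\B}_t^{(i)})^\top-\nabla g_i(x_t)^\top\|_{\mathrm{op}}$. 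Combining yields $\E\|G_t-\bar G_t\|_2^2\le C_f^2\sigma_1^2/B+\delta^2/S$, which divided by $\eta+\mu$ completes the proof.

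The main obstacle is the bookkeeping of $\sigma$-algebras: one must define the virtual iterate $\hat x_{t+1}$ using $\bar y_{t+1}$ (rather than $y_{t+1}$) so that $\hat x_{t+1}$ is $\G_t$-measurable and the noise $\bar G_t - G_t$ remains conditionally mean zero. A minor subtlety is that the tighter inner-variance accounting would give $C_f^2\sigma_1^2/(SB)$, but the stated $C_f^2\sigma_1^2/B$ suffices as an upper bound.
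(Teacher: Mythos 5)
Your proposal is correct and follows essentially the same route as the paper's proof: linearize via convexity of $g_i$ (using $\bar{y}_{t+1}^{(i)}\in\R_+^m$) and $L_g$-smoothness to reduce the left-hand side to $\inner{\Delta_t}{x_*-x_{t+1}}+\frac{L_gC_f}{2}\|x_{t+1}-x_t\|_2^2$ with $\Delta_t=G_t-\bar G_t$, then control the noise term via the $\G_t$-measurable virtual iterate and the nonexpansiveness in Lemma~\ref{lem:prox_virtual} applied to the $(\eta+\mu)$-strongly convex proximal subproblem, and finally bound $\E\|\Delta_t\|_2^2$ by the same outer/inner variance decomposition. Your side remark that the inner-variance term could be tightened to $C_f^2\sigma_1^2/(SB)$ is also accurate; the paper states the looser $C_f^2\sigma_1^2/B$.
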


\begin{proof}
We define $\Delta_t\coloneqq \frac{1}{S}\sum_{i\in\S_t} [\nabla g_i (x_t;\tilde{\B}_t^{(i)})]^\top y_{t+1}^{(i)}  - \frac{1}{n}\sum_{i=1}^n [\nabla g_i(x_t)]^\top \bar{y}_{t+1}^{(i)}$. 
\begin{align}\nonumber
& \frac{1}{n} \sum_{i=1}^n (g_i(x_{t+1}) - g_i(x_*))^\top \bar{y}_{t+1}^{(i)}  - \inner{G_t}{x_{t+1} - x_*}\\\nonumber
& = \frac{1}{n} \sum_{i=1}^n (g_i(x_{t+1}) - g_i(x_t))^\top \bar{y}_{t+1}^{(i)}  + \frac{1}{n} \sum_{i=1}^n (g_i(x_t) - g_i(x_*))^\top \bar{y}_{t+1}^{(i)} + (\frac{1}{n}\sum_{i=1}^n [\nabla g_i(x_t)]^\top\bar{y}_{t+1}^{(i)} + \Delta_t)^\top (x_* - x_{t+1}) \\\nonumber
& \stackrel{\diamondsuit}{\leq} \frac{1}{n} \sum_{i=1}^n (g_i(x_{t+1}) - g_i(x_t))^\top\bar{y}_{t+1}^{(i)}  + (\frac{1}{n}\sum_{i=1}^n [\nabla g_i (x_t)]^\top\bar{y}_{t+1}^{(i)})^\top (x_t-x_*) + (\frac{1}{n}\sum_{i=1}^n [\nabla g_i(x_t)]^\top\bar{y}_{t+1}^{(i)}+ \Delta_t)^\top (x_* - x_{t+1}) \\\label{eq:club_decomp_smth}
& = \frac{1}{n} \sum_{i=1}^n (g_i(x_{t+1}) - g_i(x_t))^\top \bar{y}_{t+1}^{(i)} + (\frac{1}{n}\sum_{i=1}^n [\nabla g_i (x_t)]^\top \bar{y}_{t+1}^{(i)})^\top (x_t-x_{t+1})  + \inner{\Delta_t}{x_* - x_{t+1}},
\end{align}
where $\diamondsuit$ is due to the convexity of $g_i$ and $\Y_i\subseteq \R_+^m$. The first two terms in~\eqref{eq:club_decomp_smth} can be bounded by the Lipschitz continuity of $f_i$ and $\nabla g_i$.
\begin{align*}
& \frac{1}{n} \sum_{i=1}^n (g_i(x_{t+1}) - g_i(x_t))^\top \bar{y}_{t+1}^{(i)} + (\frac{1}{n}\sum_{i=1}^n [\nabla g_i (x_t)]^\top \bar{y}_{t+1}^{(i)})^\top (x_t-x_{t+1}) \\
& = \frac{1}{n} \sum_{i=1}^n (g_i(x_{t+1}) - g_i(x_t) - \nabla g_i (x_t)(x_{t+1}-x_t))^\top \bar{y}_{t+1}^{(i)} \\
& \leq \frac{1}{n} \sum_{i=1}^n \Norm{\bar{y}_{t+1}^{(i)}}\Norm{g_i(x_{t+1}) - g_i(x_t) - \nabla g_i (x_t)(x_{t+1}-x_t)}_*\leq \frac{C_f}{n} \sum_{i=1}^n \Norm{g_i(x_{t+1}) - g_i(x_t) - \nabla g_i (x_t)(x_{t+1}-x_t)}_*.
\end{align*}
Due to the $L_g$-smoothness of $g_i$, we have
\begin{align*}
& \Norm{g_i(x_{t+1}) - g_i(x_t) - \nabla g_i (x_t)(x_{t+1}-x_t)}_* \leq \frac{L_g}{2} \Norm{x_{t+1} - x_t}_2^2.
\end{align*}
Thus, the first two terms in \eqref{eq:club_decomp_smth} can be upper bounded by
\begin{align}\label{eq:club_smth_term1-2}
\frac{1}{n} \sum_{i=1}^n (g_i(x_{t+1}) - g_i(x_t))^\top \bar{y}_{t+1}^{(i)} + (\frac{1}{n}\sum_{i=1}^n [\nabla g_i (x_t)]^\top \bar{y}_{t+1}^{(i)})^\top (x_t-x_{t+1})  \leq \frac{L_g C_f}{2} \Norm{x_{t+1} - x_t}_2^2.
\end{align}
Besides, we have $\E[\inner{\Delta_t}{x_*}\mid \F_{t-1}] = 0$. By the Lipschitz continuity of $f_i$ and the definition of the operator norm, we have $\|([\nabla g_i(x_t)]^\top - [\nabla g_i(x_t;\tilde{B}_t^{(i)})]^\top )\bar{y}_{t+1}^{(i)}\|_2\leq \|[\nabla g_i(x_t)]^\top - [\nabla g_i(x_t;\tilde{B}_t^{(i)})]^\top \|_{\text{op}}\|\bar{y}_{t+1}^{(i)}\|\leq C_f \|[\nabla g_i(x_t)]^\top - [\nabla g_i(x_t;\tilde{B}_t^{(i)})]^\top \|_{\text{op}}$. According to Lemma~\ref{lem:prox_virtual} and Assumption~\ref{asm:var}, we can derive that
\begin{align}\label{eq:club_smth_term3}
-\E[\inner{x_{t+1}}{\Delta_t}] & \leq \frac{\E\Norm{\Delta_t}_2^2}{\mu + \eta}  \leq \frac{1}{\mu+\eta}(\frac{\delta^2}{S} + \E\|\frac{1}{S}\sum_{i\in\S_t}([\nabla g_i(x_t)]^\top - [\nabla g_i(x_t;\tilde{B}_t^{(i)})]^\top )\bar{y}_{t+1}^{(i)}\|_2^2) \leq \frac{\frac{C_f^2\sigma_1^2}{B} + \frac{\delta^2}{S}}{\mu + \eta}.
\end{align}
Then, combining \eqref{eq:club_decomp_smth}, \eqref{eq:club_smth_term1-2} and \eqref{eq:club_smth_term3} leads to
\begin{align*}
\frac{1}{n} \E\sum_{i=1}^n (g_i(x_{t+1}) - g_i(x_*))^\top \bar{y}_{t+1}^{(i)}  - \E\inner{G_t}{x_{t+1} - x} \leq \frac{\frac{C_f^2\sigma_1^2}{B} + \frac{\delta^2}{S}}{\mu + \eta} + \frac{L_g C_f}{2} \Norm{x_{t+1} - x_t}_2^2.
\end{align*}
\end{proof}

\begin{lemma}\label{lem:club_de_nsmth}
When $g_i$ is non-smooth and Assumptions~\ref{asm:domain},~\ref{asm:inner},~\ref{asm:outer},~\ref{asm:var} hold, the following holds for Algorithm~\ref{alg:ALEXR}.
\begin{align}\label{eq:club_de_nsmth}
\frac{1}{n} \E\sum_{i=1}^n (g_i(x_{t+1}) - g_i(x_*))^\top \bar{y}_{t+1}^{(i)}  - \E\inner{G_t}{x_{t+1} - x} \leq \frac{\frac{C_f^2\sigma_1^2}{B} + \frac{\delta^2}{S} + 4C_f^2 C_g^2}{\mu + \eta} + \frac{\eta+\mu}{4}\Norm{x_{t+1} - x_t}_2^2.
\end{align}
\end{lemma}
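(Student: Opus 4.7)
The plan is to mirror the proof of Lemma~\ref{lem:club_de}, but replace the second-order smoothness bound on $g_i$ with first-order Lipschitz bounds and absorb the resulting linear-in-$\|x_{t+1}-x_t\|_2$ term via Young's inequality. Concretely, I would define the noise term $\Delta_t := \frac{1}{S}\sum_{i\in\S_t}[g'_i(x_t;\tilde{\B}_t^{(i)})]^\top y_{t+1}^{(i)} - \frac{1}{n}\sum_{i=1}^n[g'_i(x_t)]^\top \bar{y}_{t+1}^{(i)}$ in full analogy with the smooth case. Using the convexity of $g_i$ together with the coordinatewise nonnegativity of $\bar{y}_{t+1}^{(i)}\in\Y_i\subseteq\R_+^m$, one obtains $\frac{1}{n}\sum_i(g_i(x_t)-g_i(x_\ast))^\top\bar{y}_{t+1}^{(i)}\le \langle\frac{1}{n}\sum_i[g'_i(x_t)]^\top\bar{y}_{t+1}^{(i)},\,x_t-x_\ast\rangle$, which gives the decomposition
\begin{align*}
&\frac{1}{n}\sum_{i=1}^n(g_i(x_{t+1})-g_i(x_\ast))^\top\bar{y}_{t+1}^{(i)} - \inner{G_t}{x_{t+1}-x_\ast}\\
&\le \frac{1}{n}\sum_{i=1}^n(g_i(x_{t+1})-g_i(x_t))^\top\bar{y}_{t+1}^{(i)} + \Big\langle\frac{1}{n}\sum_{i=1}^n[g'_i(x_t)]^\top\bar{y}_{t+1}^{(i)},\,x_t-x_{t+1}\Big\rangle - \inner{\Delta_t}{x_{t+1}-x_\ast}.
\end{align*}

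Next, I bound the first two deterministic terms. Using $\Norm{\bar{y}_{t+1}^{(i)}}\le C_f$ (Assumption~\ref{asm:outer}) together with Assumption~\ref{asm:inner} (which gives both $\Norm{g_i(x_{t+1})-g_i(x_t)}_\ast\le C_g\Norm{x_{t+1}-x_t}_2$ and $\|[g'_i(x_t)]^\top\|_{\mathrm{op}}\le C_g$), Cauchy--Schwarz yields
\begin{align*}
\frac{1}{n}\sum_{i=1}^n(g_i(x_{t+1})-g_i(x_t))^\top\bar{y}_{t+1}^{(i)} + \Big\langle\frac{1}{n}\sum_{i=1}^n[g'_i(x_t)]^\top\bar{y}_{t+1}^{(i)},\,x_t-x_{t+1}\Big\rangle \le 2C_f C_g\Norm{x_{t+1}-x_t}_2.
\end{align*}
I then apply Young's inequality with parameter $\tfrac{\eta+\mu}{4}$ to convert this into $\tfrac{4C_f^2 C_g^2}{\eta+\mu} + \tfrac{\eta+\mu}{4}\Norm{x_{t+1}-x_t}_2^2$, which is exactly the form required in~\eqref{eq:club_de_nsmth}.

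Finally, I handle the stochastic term $-\E\inner{\Delta_t}{x_{t+1}-x_\ast}$ in the same way as the smooth case: observe $\E[\Delta_t\mid\F_{t-1}]=0$ (because $y_{t+1}^{(i)}=\bar{y}_{t+1}^{(i)}$ for $i\in\S_t$, $\tilde{\B}_t^{(i)}$ is independent of $y_{t+1}$, and uniform block sampling gives the right expectation), so $\E\inner{\Delta_t}{x_\ast}=0$, while $-\E\inner{\Delta_t}{x_{t+1}}\le \E\Norm{\Delta_t}_2^2/(\eta+\mu)$ by Lemma~\ref{lem:prox_virtual}. The variance bound from Assumption~\ref{asm:var} and $\Norm{\bar{y}_{t+1}^{(i)}}\le C_f$ give $\E\Norm{\Delta_t}_2^2\le \tfrac{C_f^2\sigma_1^2}{B}+\tfrac{\delta^2}{S}$, completing the proof.

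The only mildly delicate step is verifying the martingale property of $\Delta_t$ in the non-smooth setting, but this is identical to the smooth case since it uses only the block-coordinate structure of the update and the independence of $\tilde{\B}_t^{(i)}$ from $\B_t^{(i)}$, neither of which relies on differentiability. Everything else is a routine replacement of smoothness by Lipschitz continuity plus Young's inequality, so no new technical obstacle arises.
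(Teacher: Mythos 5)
Your proposal is correct and follows essentially the same route as the paper's proof: reuse the convexity-based decomposition and the stochastic-term bound from the smooth case verbatim, and replace the second-order smoothness estimate by the Lipschitz/operator-norm bound $2C_fC_g\Norm{x_{t+1}-x_t}_2$ followed by Young's inequality with parameter $\tfrac{\eta+\mu}{4}$. No gaps.
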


\begin{proof}
Note that \eqref{eq:club_decomp_smth} and \eqref{eq:club_smth_term3} still hold. Since $g_i$ is non-smooth, we need to bound the left-hand side of \eqref{eq:club_smth_term1-2} in a different way. Based on the definition of the operator norm and the Lipschitz continuity of $g_i$, we have $\Norm{g'_i (x_t)(x_t-x_{t+1})}_*\leq \Norm{g'_i (x_t)}_{\text{op}}\Norm{x_t-x_{t+1}}_2\leq C_g \Norm{x_t-x_{t+1}}_2$ such that
\begin{align}\nonumber
& \frac{1}{n} \sum_{i=1}^n (g_i(x_{t+1}) - g_i(x_t))^\top \bar{y}_{t+1}^{(i)}  + \frac{1}{n}\sum_{i=1}^n ([g'_i (x_t)]^\top \bar{y}_{t+1}^{(i)})^\top (x_t-x_{t+1}) \\\nonumber
& =\frac{1}{n} \sum_{i=1}^n (g_i(x_{t+1}) - g_i(x_t))^\top \bar{y}_{t+1}^{(i)} + \frac{1}{n}\sum_{i=1}^n (g'_i (x_t)(x_t-x_{t+1}))^\top \bar{y}_{t+1}^{(i)}\\\nonumber
&\leq \frac{1}{n} \sum_{i=1}^n \|\bar{y}_{t+1}^{(i)}\|(\Norm{g_i(x_{t+1}) - g_i(x_t)}_* + \Norm{g'_i (x_t)(x_t-x_{t+1})}_*)\\\label{eq:club_nsmth_term1-2}
& \leq 2C_f C_g\Norm{x_{t+1} - x_t}_2 \leq \frac{4C_f^2 C_g^2}{\eta+\mu} + \frac{\eta+\mu}{4}\Norm{x_{t+1} - x_t}_2^2,
\end{align}
where $g'_i (x_t)\in\partial g_i(x_t)$. Merge \eqref{eq:club_decomp_smth}, \eqref{eq:club_smth_term3}, and \eqref{eq:club_nsmth_term1-2}.
\begin{align*}
\frac{1}{n} \E\sum_{i=1}^n (g_i(x_{t+1}) - g_i(x_*))^\top \bar{y}_{t+1}^{(i)}  - \E\inner{G_t}{x_{t+1} - x} \leq \frac{\frac{C_f^2\sigma_1^2}{B} + \frac{\delta^2}{S} + 4C_f^2 C_g^2}{\mu + \eta} + 0.25(\eta+\mu)\Norm{x_{t+1} - x_t}_2^2.
\end{align*}
\end{proof}

\subsubsection{Proof of Theorem~\ref{thm:ALEXR_scvx}}\label{sec:proof_scvx}
\begin{proof}
If $g_i$ is smooth, we combine \eqref{eq:starter_de}, \eqref{eq:diamond_de_sc}, and \eqref{eq:club_de}.
\begin{align}\nonumber
& \E[L(x_{t+1},y_*) - L(x_*,\bar{y}_{t+1})] \leq  \frac{\tau+\rho\left(1-\frac{S}{n}\right)}{S}\E [U_\psi(y_*,y_t)] - \frac{\tau + \rho}{S} \E [U_\psi(y_*,y_{t+1})]  + \frac{\eta}{2} \E \Norm{x_*-x_t}_2^2  \\\nonumber
& \quad\quad - \frac{\eta+\mu}{2}  \E\Norm{x_* - x_{t+1}}_2^2 - \left(\frac{\tau}{n} - \frac{\lambda_2 + \lambda_3\theta}{\mu_\psi n}\right)\E\left[U_\psi(\bar{y}_{t+1},y_t)\right] - \left(\frac{\eta}{2} - \frac{C_g^2}{2\lambda_2} - {\frac{L_g C_f}{2}}\right)\E\Norm{x_{t+1} - x_t}_2^2 \\\label{eq:scvx_overall}
&\quad\quad + \frac{\theta C_g^2}{2\lambda_3}\E\Norm{x_t-x_{t-1}}_2^2  + \E[\Gamma_{t+1} - \theta\Gamma_t] + \frac{2(1+2\theta)\sigma_0^2}{B\mu_\psi(\rho+\tau)} + \frac{\frac{C_f^2\sigma_1^2}{B} + \frac{\delta^2}{S}}{\eta+\mu}.
\end{align}
Define $\Upsilon_t^x\coloneqq \frac{1}{2}\E \Norm{x_* - x_t}_2^2$ and $\Upsilon_t^y = \frac{1}{S}\E U_\psi(y_*,y_t)$. Note that $L(x_{t+1},y_*) - L(x_*,\bar{y}_{t+1}) \geq 0$. Multiply both sides of \eqref{eq:scvx_overall} by $\theta^{-t}$ and do telescoping sum from $t=0$ to $T-1$. Add $\eta\theta^{-T} \Upsilon_T^x$ to both sides. 
\begin{align*}
& \eta\theta^{-T} \Upsilon_T^x \leq \sum_{t=0}^{T-1} \theta^{-t}((\eta \Upsilon_t^x + (\tau + \rho(1-\frac{S}{n}))\Upsilon_t^y - \theta \E \Gamma_t) - ((\eta+\mu)\Upsilon_{t+1}^x + (\tau+\rho) \Upsilon_{t+1}^y - \E \Gamma_{t+1}))\\
& \quad\quad\quad\quad + \eta\theta^{-T} \Upsilon_T^x + \left(\frac{2(1+2\theta)\sigma_0^2}{\mu_\psi B (\rho + \tau)} + \frac{\frac{C_f^2\sigma_1^2}{B} + \frac{\delta^2}{S}}{\eta + \mu}\right)  \sum_{t=0}^{T-1} \theta^{-t} - \sum_{t=0}^{T-1} \theta^{-t} \left(\frac{\tau}{n} - \frac{(\lambda_2 + \lambda_3\theta)}{\mu_\psi n}\right) \E[U_\psi(\bar{y}_{t+1},y_t)]\\
& \quad\quad\quad\quad  - \sum_{t=0}^{T-1} \theta^{-t} \left(\frac{\eta}{2} - \frac{L_g C_f}{2} - \frac{C_g^2}{2\lambda_2}- \frac{C_g^2}{2\lambda_3}\right) \E\Norm{x_{t+1} - x_t}_2^2. 
\end{align*}
Let $\eta = \frac{\mu\theta}{1-\theta}$ such that $\theta =\frac{\eta}{\eta+\mu}$ 
and $\tau = \frac{\rho S}{n(1-\theta)}-\rho$ (where $\tau>0$ if $\theta>1-\frac{S}{n}$) such that $\theta = \frac{\tau + \rho\left(1-\frac{S}{n}\right)}{\tau+\rho}$. Then,
\begin{align*}
& \sum_{t=0}^{T-1} \theta^{-t}((\eta \Upsilon_t^x + (\tau + \rho(1-\frac{S}{n}))\Upsilon_t^y - \theta \E \Gamma_t) - ((\eta+\mu)\Upsilon_{t+1}^x + (\tau+\rho) \Upsilon_{t+1}^y - \E \Gamma_{t+1}))\\
& = \eta \Upsilon_0^x + (\tau + \rho(1-\frac{S}{n})) \Upsilon_0^y - \theta \E \Gamma_0  - \theta^{-T+1}\left((\eta+\mu)\Upsilon_T^x + (\tau+\rho) \Upsilon_T^y - \E\Gamma_T\right).
\end{align*}
By setting $x_{-1} = x_0$, we have $\Gamma_0 = 0$. Besides, we have $-\Gamma_T \leq \frac{1}{n}\sum_{i=1}^n  \Norm{g_i(x_T)-g_i(x_{T-1})}_*\|y_*^{(i)}-y_t^{(i)}\|\leq \frac{C_g}{n}\Norm{x_T-x_{T-1}}_2 \Norm{y_* - y_T}$. Thus,
\begin{align}\nonumber
& \eta\theta^{-T} \Upsilon_T^x  \leq  \eta \Upsilon_0^x + (\tau + \rho(1-\frac{S}{n})) \Upsilon_0^y  - \theta^{-T+1}((\eta+\mu)\Upsilon_T^x + (\tau+\rho) \Upsilon_T^y - \frac{\eta}{\theta} \Upsilon_T^x - \frac{C_g}{n}\Norm{x_T-x_{T-1}}_2 \Norm{y_* - y_T}) \\\nonumber
& \quad\quad\quad\quad - \sum_{t=1}^{T-1} \theta^{-t+1}\underbrace{( ((\eta+\mu)\Upsilon_{t+1}^x + (\tau+\rho) \Upsilon_{t+1}^y - \E \Gamma_{t+1}) - (\frac{\eta}{\theta} \Upsilon_t^x + (\tau + \rho(1-\frac{S}{n}))/\theta\Upsilon_t^y -  \E \Gamma_t))}_{\heartsuit}\\\nonumber
& \quad\quad\quad\quad + \left(\frac{2(1+2\theta)\sigma_0^2}{\mu_\psi B(\rho + \tau)} + \frac{\frac{C_f^2\sigma_1^2}{B} + \frac{\delta^2}{S}}{\eta + \mu}\right)  \sum_{t=0}^{T-1} \theta^{-t} - \sum_{t=0}^{T-1} \theta^{-t} \underbrace{\left(\frac{\tau}{n} - \frac{(\lambda_2 + \lambda_3\theta)}{\mu_\psi n}\right)}_{\heartsuit} \E[U_\psi(\bar{y}_{t+1},y_t)]\\\label{eq:sc_smth_telescop}
& \quad\quad\quad\quad  - \sum_{t=0}^{T-1} \theta^{-t} \underbrace{\left(\frac{\eta}{2} - \frac{L_g C_f}{2} - \frac{C_g^2}{2\lambda_2}- \frac{C_g^2}{2\lambda_3}\right)}_{\heartsuit} \E\Norm{x_{t+1} - x_t}_2^2. 
\end{align}
Note that $\eta+\mu - \frac{\eta}{\theta} = 0 \Leftrightarrow \theta = \frac{\eta}{\eta+\mu}$ such that $(\eta+\mu)\Upsilon_T^x - \frac{\eta}{\theta} \Upsilon_T^x \geq 0$ and $\frac{C_g}{n}\Norm{x_T-x_{T-1}}_2 \Norm{y - y_T} \leq \frac{C_g^2}{2\lambda_2}\Norm{x_T - x_{T-1}}_2^2 + \frac{\lambda_2}{2\mu_\psi n^2} U_\psi(y_*,y_T)$. To make the $\heartsuit$ terms in \eqref{eq:sc_smth_telescop} be non-negative, we choose $\lambda_2 \asymp\frac{C_g \sqrt{S\rho \mu_\psi}}{\sqrt{n\mu}}$, $\lambda_3 \asymp\frac{C_g \sqrt{S\rho \mu_\psi}}{\sqrt{n\mu}}$ while ensuring that 
\begin{align}\label{eq:sc_eta_tau_cond}
&1/\tau \leq O\left(\frac{\sqrt{n\mu\mu_\psi}}{C_g\sqrt{S\rho}}\right), \quad 1/\eta \leq O\left(\frac{\sqrt{S\rho \mu_\psi}}{C_g \sqrt{n\mu}}\land \frac{1}{L_g C_f}\right). 
\end{align}
Since $\tau = \frac{\rho S}{n(1-\theta)} - \rho \Leftrightarrow \theta = \frac{\tau + \rho(1-\frac{S}{n})}{\tau+\rho}$, we have $\tau +\rho\left(1-\frac{S}{n}\right) = \theta(\tau + \rho)$ and $(\tau + \rho)(1-\theta) =\frac{\rho S}{n}$.
\begin{align*}
\mu \Upsilon_T^x & \leq  \mu \theta^T \Upsilon_0^x + \frac{\left(\tau + \rho\left(1-\frac{S}{n}\right)\right)(1-\theta)}{\theta} \theta^T \Upsilon_0^y + \left(\frac{2(1+2\theta)\sigma_0^2}{\mu_\psi B(\rho + \tau)} + \frac{\frac{C_f^2\sigma_1^2}{B} + \frac{\delta^2}{S}}{\eta + \mu}\right)\\
& = \mu \theta^T \Upsilon_0^x + (\tau + \rho)(1-\theta)\theta^T \Upsilon_0^y + \left(\frac{2(1+2\theta)\sigma_0^2}{\mu_\psi B(\rho + \tau)} + \frac{\frac{C_f^2\sigma_1^2}{B} + \frac{\delta^2}{S}}{\eta + \mu}\right)\\
& = \mu \theta^T \Upsilon_0^x + \frac{\rho S}{n}\theta^T \Upsilon_0^y + \left(\frac{2(1+2\theta)\sigma_0^2}{\mu_\psi B(\rho + \tau)} + \frac{\frac{C_f^2\sigma_1^2}{B} + \frac{\delta^2}{S}}{\eta + \mu}\right).
\end{align*}
We select $\theta =  1-O\left(\frac{S}{n}\land \frac{\mu}{L_g  C_f}  \land \sqrt{\frac{\mu\rho \mu_\psi S}{C_g^2 n}}\land \frac{\mu_\psi B \rho S\epsilon}{\sigma_0^2 n}\land \frac{B\mu \epsilon}{C_f^2\sigma_1^2}\land \frac{S\mu \epsilon}{\delta^2}\right)$ to make \eqref{eq:sc_eta_tau_cond} hold and 
\begin{align*}
& \frac{2(1+2\theta)\sigma_0^2}{\mu_\psi B(\rho + \tau)} + \frac{\frac{C_f^2\sigma_1^2}{B} + \frac{\delta^2}{S}}{\eta + \mu} \leq \frac{2(1+2\theta)(1-\theta)\sigma_0^2 n}{\mu_\psi B\rho S} + \frac{(1-\theta)\left(\frac{C_f^2\sigma_1^2}{B} + \frac{\delta^2}{S}\right)}{\mu}\leq \epsilon.
\end{align*}
Since $\frac{1}{L_f}=\mu_\psi \rho$ when $f_i$ is $L_f$-smooth, the number of iterations needed by Algorithm~\ref{alg:ALEXR} to make $\mu \Upsilon_T^x \leq \epsilon$ is
\begin{align*}
T = \tilde{O}\left(\frac{n}{S} + \frac{L_g C_f}{\mu} + \frac{C_g \sqrt{n L_f}}{\sqrt{S \mu}} + \frac{n L_f \sigma_0^2}{B S\epsilon} + \frac{C_f^2\sigma_1^2}{ \mu B\epsilon} + \frac{\delta^2}{\mu S \epsilon}\right),
\end{align*}
where $\tilde{O}(\cdot)$ hides the polylog$(1/\epsilon)$ factor. In the case that $g_i$ is non-smooth, we utilize \eqref{eq:club_de_nsmth} instead of \eqref{eq:club_de}. Correspondingly, we need to replace the blue term $\frac{L_gC_f}{2}$ in \eqref{eq:scvx_overall} by $0.25(\eta+\mu)$. Additionally, there is a $\frac{4 C_f^2 C_g^2}{\eta+\mu}$ term on the right-hand side of \eqref{eq:scvx_overall}. Following similar steps, we can get the iteration complexity to make $\mu \Upsilon_T^x \leq \epsilon$ is
\begin{align*}
T = \tilde{O}\left(\frac{n}{S} + \frac{C_g \sqrt{n L_f}}{\sqrt{S \mu}} + \frac{n L_f \sigma_0^2}{B S \epsilon} + \frac{C_f^2\sigma_1^2}{ \mu B\epsilon} + \frac{\delta^2}{\mu S \epsilon} + \frac{C_f^2 C_g^2}{\mu\epsilon}\right).
\end{align*}
\end{proof}

\subsection{Convergence Analysis of the Convex Case with Possibly Non-smooth $f_i$ (Section~\ref{sec:merely_cvx})}

As discussed in Section~\ref{sec:alg_design}, we can choose $\psi_i = \frac{1}{2}\|\cdot\|_2^2$ for the cFCCO problem with non-smooth $f_i$. We present our result with a general $\mu_\psi$-strongly convex and $L_\psi$-smooth distance-generating function $\psi_i$, which subsumes the quadratic one. The following lemma extends Lemma A.2 in~\citet{alacaoglu2022complexity} to mini-batch sampling and a general smooth and strongly convex distance-generating function $\psi_i$.
\begin{lemma}\label{lem:dual}
The following holds for Algorithm~\ref{alg:ALEXR} with $L_\psi$-smooth and $\mu_\psi$-strongly convex $\psi_i$ and any $\lambda_1>0$ satisfies that 
\begin{align}\label{eq:triangle_de}
& \E\left[\frac{\tau}{n} \left(U_\psi(y,y_t) - U_\psi(y,\bar{y}_{t+1})\right) - \frac{\tau}{n} U_\psi(\bar{y}_{t+1}, y_t)\right] \\\nonumber
& \leq \E\left[\frac{\tau}{S} (U_\psi(y,y_t) - U_\psi(y,y_{t+1})) + \frac{\tau \lambda_1}{S} (U_\psi(y ,\hat{y}_t) -  U_\psi(y, \hat{y}_{t+1})) \right]  - \frac{\tau}{n}\left(1 - \frac{L_\psi^2}{\lambda_1 \mu_\psi^2 S}\right)\E\left[U_\psi(\bar{y}_{t+1}, y_t)\right], 
\end{align}
where $\hat{y}_{t+1}^{(i)} = \argmin_{v} \{-v^\top \Delta_t^{(i)}  + \frac{n\lambda_1}{S} U_{\psi_i}(v,\hat{y}_t^{(i)}) \}$ and $\Delta_t^{(i)} \coloneqq -\frac{n}{S} \nabla\psi_i(y_{t+1}^{(i)}) +  \nabla\psi_i(\bar{y}_{t+1}^{(i)}) + \frac{n-S}{S} \nabla\psi_i(y_t^{(i)})$.
\end{lemma}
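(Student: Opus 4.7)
The plan is to adapt the virtual-sequence technique of \citet{alacaoglu2022complexity} to the block-coordinate sampling structure, so as to decouple the dual update $y_{t+1}$ from a possibly $\S_t$-dependent test point $y$ (such as $y=\tilde{y}_T$). The argument will isolate a martingale-difference vector $\Delta_t$ and then control it through Lemma~\ref{lem:mds_ip} with the virtual iterate $\hat{y}_t$.

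First I expand $\frac{n}{S}U_{\psi_i}(y^{(i)},y_{t+1}^{(i)})-\frac{n-S}{S}U_{\psi_i}(y^{(i)},y_t^{(i)})-U_{\psi_i}(y^{(i)},\bar{y}_{t+1}^{(i)})$ via the definition of Bregman divergence. The coefficient of $\psi_i(y^{(i)})$ is $\tfrac{n}{S}-\tfrac{n-S}{S}-1=0$, so the residue is affine in $y^{(i)}$ and equals $\langle\Delta_t^{(i)},y^{(i)}\rangle+C_t^{(i)}$ with $\Delta_t^{(i)}$ as defined in the statement and $C_t^{(i)}$ independent of $y$. Uniform block sampling gives $\E[\nabla\psi_i(y_{t+1}^{(i)})\mid\G_t]=\tfrac{S}{n}\nabla\psi_i(\bar{y}_{t+1}^{(i)})+\tfrac{n-S}{n}\nabla\psi_i(y_t^{(i)})$, so $\E[\Delta_t^{(i)}\mid\G_t]=0$, and testing the pointwise identity against any $\G_t$-measurable $y$ forces $\E[C_t^{(i)}\mid\G_t]=0$ as well (this holds regardless of $y$ since $C_t^{(i)}$ does not involve $y$). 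Taking total expectations, summing over $i$, and rearranging yields the key bookkeeping identity
\begin{equation*}
\frac{\tau}{n}\E\big[U_\psi(y,y_t)-U_\psi(y,\bar{y}_{t+1})\big]=\frac{\tau}{S}\E\big[U_\psi(y,y_t)-U_\psi(y,y_{t+1})\big]+\frac{\tau}{n}\E[\langle \Delta_t,y\rangle],
\end{equation*}
valid even when $y$ depends on $\S_t$.

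Next I invoke Lemma~\ref{lem:mds_ip} with $\alpha=\tfrac{n\lambda_1}{S}$ and the virtual sequence $\hat{y}_t$ to bound $\tfrac{\tau}{n}\E[\langle\Delta_t,y\rangle]$ by the telescoping difference $\tfrac{\tau\lambda_1}{S}\E[U_\psi(y,\hat{y}_t)-U_\psi(y,\hat{y}_{t+1})]$ plus a variance remainder $\tfrac{\tau S}{2n^2\lambda_1\mu_\psi}\E\|\Delta_t\|_*^2$. To control $\|\Delta_t\|_*^2$, I use the block-level identity $\nabla\psi_i(y_{t+1}^{(i)})-\nabla\psi_i(y_t^{(i)})=\mathbb{I}[i\in\S_t](\nabla\psi_i(\bar{y}_{t+1}^{(i)})-\nabla\psi_i(y_t^{(i)}))$ to rewrite $\Delta_t^{(i)}=(1-\mathbb{I}[i\in\S_t]\tfrac{n}{S})(\nabla\psi_i(\bar{y}_{t+1}^{(i)})-\nabla\psi_i(y_t^{(i)}))$, apply $L_\psi$-smoothness of $\psi_i$ and $\mu_\psi$-strong convexity to pass from $\|\bar{y}_{t+1}^{(i)}-y_t^{(i)}\|^2$ to $U_{\psi_i}(\bar{y}_{t+1}^{(i)},y_t^{(i)})$, and then average the resulting scalar indicator variance over $\S_t$. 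Combining with the $\tfrac{\tau S}{2n^2\lambda_1\mu_\psi}$ prefactor produces the $\tfrac{\tau L_\psi^2}{n\lambda_1\mu_\psi^2 S}\E[U_\psi(\bar{y}_{t+1},y_t)]$ error, which is absorbed into the $-\tfrac{\tau}{n}\E[U_\psi(\bar{y}_{t+1},y_t)]$ on the left-hand side to give the $(1-\tfrac{L_\psi^2}{\lambda_1\mu_\psi^2 S})$ factor in the stated inequality.

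The main obstacle is the first step. Because $y$ can depend on $\S_t$, the naive identity $\E[U_\psi(y,y_{t+1})\mid\G_t]=\tfrac{S}{n}U_\psi(y,\bar{y}_{t+1})+\tfrac{n-S}{n}U_\psi(y,y_t)$ fails, which is exactly why the full-coordinate analysis of SAPD does not carry over. The whole argument hinges on recognising that the quadratic-in-$\psi_i$ portion cancels, reducing the $y$-dependence of the discrepancy to the linear term $\langle\Delta_t^{(i)},y^{(i)}\rangle$ that the martingale-difference machinery of Lemma~\ref{lem:mds_ip} with the auxiliary sequence $\hat{y}_t$ can then handle.
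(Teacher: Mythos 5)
Your proposal is correct and follows essentially the same route as the paper's proof: the same rescaled Bregman decomposition in which the $\psi_i(y^{(i)})$ contributions cancel and the $y$-dependence reduces to the linear term $\langle\Delta_t^{(i)},y^{(i)}\rangle$, the same application of Lemma~\ref{lem:mds_ip} with $\alpha=\frac{n\lambda_1}{S}$ and the virtual sequence $\hat{y}_t$, and the same variance bound on $\Delta_t^{(i)}$ via $L_\psi$-smoothness, strong convexity, and the conditional distribution of $y_{t+1}^{(i)}$ given $\G_t$. Your indicator rewriting of $\Delta_t^{(i)}$ is a slightly more explicit version of the paper's $\E\|X-\E X\|_*^2\le\E\|X\|_*^2$ step, but the argument is the same.
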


\begin{proof}
First, we can make the following decomposition. 
\begin{align}\label{eq:dual_decomp}
& \frac{\tau}{n} U_\psi(y,y_t) -\frac{\tau}{n} U_\psi(y,\bar{y}_{t+1}) - \frac{\tau}{n} U_\psi(\bar{y}_{t+1}, y_t) \\\nonumber
& = \frac{\tau}{S} U_\psi(y,y_t) - \frac{\tau}{S} U_\psi(y,y_{t+1}) -\frac{\tau}{n} U_\psi(\bar{y}_{t+1},y_t) +  \frac{\tau}{S}  U_\psi(y,y_{t+1}) - \frac{\tau}{n} U_\psi(y,\bar{y}_{t+1}) + \frac{(S-n)\tau}{n S} U_\psi(y,y_t).
\end{align}
We rewrite the last three terms above as follows.
\begin{align*}
& \frac{\tau}{S}  U_\psi(y,y_{t+1}) - \frac{\tau}{n} U_\psi(y,\bar{y}_{t+1}) + \frac{(S-n)\tau}{n S} U_\psi(y,y_t)\\
& = \frac{\tau}{S} \sum_{i=1}^n (\psi_i(y^{(i)}) - \psi_i(y_{t+1}^{(i)}) - (y^{(i)}-y_{t+1}^{(i)})^\top\nabla\psi_i(y_{t+1}^{(i)})) - \frac{\tau}{n}\sum_{i=1}^n (\psi_i(y^{(i)}) - \psi_i(\bar{y}_{t+1}^{(i)}) - (y^{(i)} - \bar{y}_{t+1}^{(i)})^\top\nabla\psi_i(\bar{y}_{t+1}^{(i)}))\\
& \quad\quad + \frac{(S-n)\tau}{n S}\sum_{i=1}^n (\psi_i(y^{(i)}) - \psi_i(y_t^{(i)}) - (y^{(i)}-y_t^{(i)})^\top\nabla\psi_i(y_t^{(i)}))\\
& = \frac{\tau}{n} \sum_{i=1}^n \left(\psi_i(\bar{y}_{t+1}^{(i)}) - \frac{n}{S} \psi_i(y_{t+1}^{(i)}) + \frac{n-S}{S}\psi_i(y_t^{(i)})\right) + \underbrace{\frac{\tau}{n}\sum_{i=1}^n(-\frac{n}{S} \nabla\psi_i(y_{t+1}^{(i)}) + \nabla\psi_i(\bar{y}_{t+1}^{(i)}) + \frac{n-S}{S} \nabla\psi_i(y_t^{(i)}))^\top y^{(i)}}_{\sharp}\\
& \quad\quad + \frac{\tau}{S} \sum_{i=1}^n \inner{\nabla\psi_i(y_{t+1}^{(i)})}{y_{t+1}^{(i)}} - \frac{\tau}{n}\sum_{i=1}^n\inner{\nabla\psi_i(\bar{y}_{t+1}^{(i)})}{\bar{y}_{t+1}^{(i)}}  + \frac{(S-n)\tau}{n S} \sum_{i=1}^n \inner{\nabla\psi_i(y_t^{(i)})}{y_t^{(i)}}.
\end{align*}
Note that both $\bar{y}_{t+1}^{(i)}$ and $y_t^{(i)}$ are independent of $\S_t$ such that 
\begin{align*}
& \E[\psi_i(y_{t+1}^{(i)})\mid \G_t] = \frac{S}{n} \psi_i(\bar{y}_{t+1}^{(i)}) + \frac{n-S}{n}\psi_i(y_t^{(i)}),\\
& \E\left[\inner{\nabla\psi_i(y_{t+1}^{(i)})}{y_{t+1}^{(i)}}\mid \G_t\right]  = \frac{S}{n} \inner{\nabla\psi_i(\bar{y}_{t+1}^{(i)})}{\bar{y}_{t+1}^{(i)}} + \frac{n-S}{n} \inner{\nabla\psi_i(y_t^{(i)})}{y_t^{(i)}},\\
& \E\left[\nabla\psi_i(y_{t+1}^{(i)})\mid \G_t\right]  = \frac{S}{n}\nabla\psi_i(\bar{y}_{t+1}^{(i)}) + \frac{n-S}{n} \nabla\psi_i(y_t^{(i)}).
\end{align*}
Apply Lemma~\ref{lem:mds_ip} to $\sharp$ with $\Delta_t^{(i)} \coloneqq -\frac{n}{S} \nabla\psi_i(y_{t+1}^{(i)}) +  \nabla\psi_i(\bar{y}_{t+1}^{(i)}) + \frac{n-S}{S} \nabla\psi_i(y_t^{(i)})$,  $\hat{y}_{t+1}^{(i)} = \argmin_{v} \{-v^\top \Delta_t^{(i)}  + \alpha U_{\psi_i}(v,\hat{y}_t^{(i)}) \}$ ($\alpha$ to be determined) such that 
\begin{align*}
\E\left[\inner{\Delta_t^{(i)}}{y^{(i)}}\right] \leq \E\left[\alpha U_{\psi_i}(y^{(i)}, \hat{y}_t^{(i)}) - \alpha U_{\psi_i}(y^{(i)}, \hat{y}_{t+1}^{(i)})\right] + \frac{1}{2 \mu_\psi \alpha} \E\left[\Norm{\Delta_t^{(i)}}_*^2\right].
\end{align*}
Sum both sides from $1$ to $n$ and divide $n$ on both sides
\begin{align*}
\E[\sharp] \leq \E\left[\frac{\alpha \tau}{n} (U_\psi(y,\hat{y}_t) - U_\psi(y,\hat{y}_{t+1}))\right] + \frac{\tau}{2n  \mu_\psi \alpha} \E\left[\sum_{i=1}^n \Norm{\Delta_t^{(i)}}_*^2\right].
\end{align*}
Note that $\E[(\nabla\psi_i(y_{t+1}^{(i)}) - \nabla\psi_i(y_t^{(i)})) \mid \G_t] = \frac{S}{n}(\nabla\psi_i(\bar{y}_{t+1}^{(i)}) - \nabla\psi_i(y_t^{(i)}))$ such that 
\begin{align*}
\E\left[\|\Delta_t^{(i)}\|_*^2\right] & =  \E\Norm{(\nabla\psi_i(\bar{y}_{t+1}^{(i)}) - \nabla\psi_i(y_t^{(i)})) - \frac{n}{S}(\nabla\psi_i(y_{t+1}^{(i)}) - \nabla\psi_i(y_t^{(i)})}_*^2 \leq  \frac{n^2}{S^2}\E\Norm{\nabla\psi_i(y_{t+1}^{(i)}) - \nabla\psi_i(y_t^{(i)})}_*^2.
\end{align*}
Thus, we have
\begin{align}\label{eq:mds_telescop}
\E[\sharp] \leq \E\left[\frac{\alpha \tau}{n} (U_\psi(y,\hat{y}_t) - U_\psi(y,\hat{y}_{t+1}))\right]+ \frac{\tau n}{2  \mu_\psi \alpha S^2} \sum_{i=1}^n\E\Norm{\nabla\psi_i(y_{t+1}^{(i)}) - \nabla\psi_i(y_t^{(i)})}_*^2.
\end{align}
The last term above can be upper bounded as
\begin{align*}
\frac{\tau n}{2  \mu_\psi \alpha S^2} \sum_{i=1}^n\E\Norm{\nabla\psi_i(y_{t+1}^{(i)}) - \nabla\psi_i(y_t^{(i)})}_*^2 \leq     \frac{\tau n L_{\psi}^2}{2  \mu_\psi \alpha S^2}\sum_{i=1}^n \E\Norm{y_{t+1}^{(i)} - y_t^{(i)}}^2.
\end{align*}
Choose $\alpha = \frac{n \lambda_1}{S}$ for some $\lambda_1>0$. According to \eqref{eq:dual_decomp} and \eqref{eq:mds_telescop} and $\E[\Norm{y_{t+1} - y_t}^2\mid \G_t] = \frac{S}{n} \Norm{\bar{y}_{t+1} - y_t}^2 \leq \frac{2S}{n\mu_\psi} U_\psi(\bar{y}_{t+1},y_t)$, we can finish the proof.
\end{proof}

\subsubsection{A Supporting Lemma}

\begin{lemma}\label{lem:diamond_de}
Under Assumptions~\ref{asm:inner} and \ref{asm:var}, the following holds for Algorithm~\ref{alg:ALEXR} with $\theta=1$ and any $\lambda_2,\lambda_3,\lambda_4,\lambda_5>0$, $y\in\Y$.
\begin{align}\label{eq:diamond_de}
& \frac{1}{n}\sum_{i=1}^n\E (g_i(x_{t+1}) - \tilde{g}_t^{(i)})^\top (y^{(i)}- \bar{y}_{t+1}^{(i)})\\\nonumber
&= \E[\Gamma_{t+1} - \Gamma_t]+ \frac{2\lambda_2 }{n}\E[U_\psi(y,\hhat{y}_t) - U_\psi(y,\hhat{y}_{t+1})] + \frac{\lambda_5}{n}\E[U_\psi(y,\breve{y}_t) - U_\psi(y,\breve{y}_{t+1})] \\\nonumber
& \quad + \frac{( \lambda_3 + \lambda_4) \E[U_\psi(\bar{y}_{t+1},y_t)]}{\mu_\psi n} + \frac{C_g^2 \E\Norm{x_{t+1} - x_t}^2}{2\lambda_3}  + \frac{C_g^2 \E \Norm{x_t - x_{t-1}}^2}{2\lambda_4} + \frac{9\sigma_0^2}{\tau\mu_\psi B} + \frac{\sigma_0^2}{\lambda_2 \mu_\psi B}  + \frac{\sigma_0^2}{2\lambda_5 \mu_\psi B}.
\end{align}
where $\Gamma_t\coloneqq \frac{1}{n} \sum_{i=1}^n (g_i(x_t) - g_i(x_{t-1}))^\top (y^{(i)}-y_t^{(i)})$, $\{\hhat{y}_t\}_{t\geq 0}$, $\{\breve{y}_t\}_{t\geq 0}$ are virtual sequences constructed as $\hhat{y}_{t+1}^{(i)} = \argmin_{v\in\Y_i} \{v^\top(g_i(x_t;\B_t^{(i)}) - g_i(x_t)) + \lambda_2 U_{\psi_i}(v,\hhat{y}_t^{(i)})\}$, $\breve{y}_{t+1}^{(i)} = \argmin_{v\in\Y_i} \{v^\top (g_i(x_{t-1};\B_t^{(i)}) - g_i(x_{t-1}))+ \lambda_2 U_{\psi_i}(v,\breve{y}_t^{(i)})\}$ for each $i\in[n]$.
\end{lemma}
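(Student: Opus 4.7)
\textbf{Proof proposal for Lemma~\ref{lem:diamond_de}.} The plan is to mimic the decomposition from the strongly convex case (Lemma~\ref{lem:diamond_de_sc}) but, crucially, replace the ``deterministic $y_\ast$'' shortcut with virtual dual sequences that allow us to handle an arbitrary (possibly randomness-dependent) $y\in\Y$. Specifically, with $\theta=1$ I would write
\begin{align*}
\frac{1}{n}\sum_{i=1}^n (g_i(x_{t+1})-\tilde g_t^{(i)})^\top(y^{(i)}-\bar y_{t+1}^{(i)})
&= \underbrace{\tfrac{2}{n}\sum_i (g_i(x_t)-g_i(x_t;\B_t^{(i)}))^\top(y^{(i)}-\bar y_{t+1}^{(i)})}_{\text{I}}\\
&\quad + \underbrace{\tfrac{1}{n}\sum_i (g_i(x_{t+1})-g_i(x_t))^\top(y^{(i)}-\bar y_{t+1}^{(i)})}_{\text{II}}\\
&\quad + \underbrace{\tfrac{1}{n}\sum_i (g_i(x_{t-1})-g_i(x_t))^\top(y^{(i)}-\bar y_{t+1}^{(i)})}_{\text{III}}\\
&\quad + \underbrace{\tfrac{1}{n}\sum_i (g_i(x_{t-1};\B_t^{(i)})-g_i(x_{t-1}))^\top(y^{(i)}-\bar y_{t+1}^{(i)})}_{\text{IV}},
\end{align*}
mirroring the starter equation \eqref{eq:diamond_de_starter_sc}.

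For II$+$III, I would repeat the manipulation used in Lemma~\ref{lem:diamond_de_sc}: rewrite II$+$III as $\Gamma_{t+1}-\Gamma_t$ plus residual inner products against $y_{t+1}^{(i)}-\bar y_{t+1}^{(i)}$ and $y_t^{(i)}-\bar y_{t+1}^{(i)}$, and control the residuals by Cauchy--Schwarz, the Lipschitz bound $\|g_i(x)-g_i(x')\|_\ast\le C_g\|x-x'\|_2$, and Young's inequality with free parameters $\lambda_3,\lambda_4$. After aggregating and using $\mu_\psi$-strong convexity of each $\psi_i$ to convert $\sum_i\|\cdot\|^2$ into $U_\psi(\bar y_{t+1},y_t)$, this yields exactly the terms $\tfrac{C_g^2}{2\lambda_3}\|x_{t+1}-x_t\|^2+\tfrac{C_g^2}{2\lambda_4}\|x_t-x_{t-1}\|^2+\tfrac{\lambda_3+\lambda_4}{\mu_\psi n}U_\psi(\bar y_{t+1},y_t)$ appearing in~\eqref{eq:diamond_de}.

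The main obstacle is Terms I and IV, because in the SC proof the analogous contributions vanished in expectation thanks to $y_\ast$ being deterministic, while here $y$ may depend on the sampling noise. My strategy is to split each of Terms I and IV into a part paired with $y^{(i)}$ and a part paired with $\bar y_{t+1}^{(i)}$. The $\bar y_{t+1}^{(i)}$-parts are handled exactly as in Lemma~\ref{lem:diamond_de_sc}: introduce the virtual iterate $\dot y_{t+1}^{(i)}=\argmax_v\{v^\top\bar g_t^{(i)}-f_i^\ast(v)-\tau U_{\psi_i}(v,y_t^{(i)})\}$ (which is independent of $\B_t^{(i)}$), invoke Lemma~\ref{lem:prox_virtual} on the proximal step to get $\|\bar y_{t+1}^{(i)}-\dot y_{t+1}^{(i)}\|\le\frac{1}{\mu_\psi(\rho+\tau)}\|\tilde g_t^{(i)}-\bar g_t^{(i)}\|_\ast$, and then apply Cauchy--Schwarz together with Assumption~\ref{asm:var}. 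Plugging $\theta=1$ (and using $\rho\ge 0$) into the resulting coefficient $(1+\theta)(1+2\theta)+\theta(1+2\theta)=9$ delivers the term $\tfrac{9\sigma_0^2}{\tau\mu_\psi B}$.

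The $y^{(i)}$-parts are where the new virtual sequences enter. For Term I, the quantity $\Delta_t^{(i)}:=g_i(x_t)-g_i(x_t;\B_t^{(i)})$ is a martingale difference w.r.t.\ $\F_{t-1}$, so I would apply Lemma~\ref{lem:mds_ip} block-wise with parameter $\alpha=\lambda_2$ and prox-generator $\psi_i$, defining $\hhat y_{t+1}^{(i)}=\argmin_v\{v^\top(g_i(x_t;\B_t^{(i)})-g_i(x_t))+\lambda_2 U_{\psi_i}(v,\hhat y_t^{(i)})\}$; summing over $i$, using Assumption~\ref{asm:var} to control $\tfrac1n\sum_i\E\|\Delta_t^{(i)}\|_\ast^2\le\sigma_0^2/B$, and accounting for the factor $2$ in front of Term I, this yields $\tfrac{2\lambda_2}{n}\E[U_\psi(y,\hhat y_t)-U_\psi(y,\hhat y_{t+1})]+\tfrac{\sigma_0^2}{\lambda_2\mu_\psi B}$. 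Term IV is treated identically with the martingale difference $g_i(x_{t-1};\B_t^{(i)})-g_i(x_{t-1})$, parameter $\lambda_5$, and virtual sequence $\breve y_t$, producing $\tfrac{\lambda_5}{n}\E[U_\psi(y,\breve y_t)-U_\psi(y,\breve y_{t+1})]+\tfrac{\sigma_0^2}{2\lambda_5\mu_\psi B}$. Adding the bounds for I--IV gives exactly~\eqref{eq:diamond_de}; the delicate point throughout is that $\hhat y_t$ and $\breve y_t$ are introduced \emph{purely as analytical devices} so that Lemma~\ref{lem:mds_ip} applies despite $y$ itself being adversarial, and tracking the correct multiplicative constants from the extrapolation ($\theta=1$) is what pins down the $9$, the $2\lambda_2$, and the $\tfrac12$ in front of $\lambda_5$.
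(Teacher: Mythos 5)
Your proposal follows essentially the same route as the paper's proof: the same I--IV decomposition, the same treatment of II$+$III via $\Gamma_{t+1}-\Gamma_t$ plus Young's inequality with $\lambda_3,\lambda_4$, the same use of the virtual iterate $\dot{y}_{t+1}^{(i)}$ with Lemma~\ref{lem:prox_virtual} for the $\bar{y}_{t+1}^{(i)}$-parts, and the same application of Lemma~\ref{lem:mds_ip} with the auxiliary sequences $\hhat{y}_t,\breve{y}_t$ for the $y^{(i)}$-parts, with the constants $9$, $2\lambda_2$, and $\tfrac{1}{2\lambda_5}$ arising exactly as you computed. The argument is correct.
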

\begin{proof}
The term $\frac{1}{n}\sum_{i=1}^n (g_i(x_{t+1}) - \tilde{g}_t^{(i)})^\top (y^{(i)}- \bar{y}_{t+1}^{(i)})$ can be decomposed as
\begin{align}\label{eq:diamond_de_starter}
& \frac{1}{n}\sum_{i=1}^n (g_i(x_{t+1}) - \tilde{g}_t^{(i)})^\top (y^{(i)}- \bar{y}_{t+1}^{(i)})\\\nonumber
& = \underbrace{\frac{1+\theta}{n} \sum_{i=1}^n (g_i(x_t) - g_i(x_t;\B_t^{(i)}))^\top (y^{(i)}-\bar{y}_{t+1}^{(i)})}_{\text{I}} + \underbrace{\frac{1}{n} \sum_{i=1}^n g_i(x_{t+1})^\top (y^{(i)}-\bar{y}_{t+1}^{(i)})  -  \frac{1}{n} \sum_{i=1}^n g_i(x_t)^\top (y^{(i)}-\bar{y}_{t+1}^{(i)})}_{\text{II}} \\\nonumber
& \quad\quad + \underbrace{\frac{\theta}{n}\sum_{i=1}^n (g_i(x_{t-1}) - g_i(x_t))^\top (y^{(i)}-\bar{y}_{t+1}^{(i)})}_{\text{III}} + \underbrace{\frac{\theta}{n}\sum_{i=1}^n (g_i(x_{t-1};\B_t^{(i)}) - g_i(x_{t-1}))^\top (y^{(i)}-\bar{y}_{t+1}^{(i)})}_{\text{IV}}.
\end{align}
Note that our ALEXR (Algorithm~\ref{alg:ALEXR}) only samples $\B_t^{(i)}$ for those $i\in\S_t$ in the $t$-th iteration. For those $i\notin \S_t$, the batches $\{\B_t^{(i)}\}_{i\notin\S_t}$ are virtual and introduced solely for the convenience of analysis, which are not required in the actual execution of Algorithm~\ref{alg:ALEXR}. For each $i \in[n]$, define $\dot{y}_{t+1}^{(i)}\coloneqq \argmax_{v\in\Y_i}\{v^\top \bar{g}_t^{(i)} - f_i^*(v) - \tau U_{\psi_i}(v,y_t^{(i)})\}$ and $\bar{g}_t^{(i)}\coloneqq g_i(x_t) +\theta(g_i(x_t) - g_i(x_{t-1}))$. We decompose the I term in \eqref{eq:diamond_de_starter} as
\begin{align*}
& \text{I}  = \frac{1+\theta}{n}\sum_{i=1}^n (g_i(x_t) - g_i(x_t;\B_t^{(i)}))^\top (y^{(i)} - \bar{y}_{t+1}^{(i)})  \\
& = \frac{1+\theta}{n} \sum_{i=1}^n \left\{(g_i(x_t) - g_i(x_t;\B_t^{(i)}))^\top(\dot{y}_{t+1}^{(i)} - \bar{y}_{t+1}^{(i)}) + (g_i(x_t) - g_i(x_t;\B_t^{(i)}))^\top y^{(i)} -(g_i(x_t) - g_i(x_t;\B_t^{(i)}))^\top \dot{y}_{t+1}^{(i)}\right\}.
\end{align*}
Since $f_i^* + \tau U_{\psi_i}(y^{(i)},y_t^{(i)})$ is $\tau\mu_\psi$-strongly convex, Lemma~\ref{lem:prox_virtual} implies that
\begin{align*}
& \frac{1}{n} \E \sum_{i=1}^n (g_i(x_t) - g_i(x_t;\B_t^{(i)}))^\top (\dot{y}_{t+1}^{(i)} - \bar{y}_{t+1}^{(i)}) \leq \frac{1}{n}\sum_{i=1}^n \E \|g_i(x_t) - g_i(x_t;\B_t^{(i)})\|_*\Norm{\dot{y}_{t+1} - \bar{y}_{t+1}}\\
& \leq \frac{1}{n \tau\mu_\psi}\sum_{i=1}^n \E\left[\|g_i(x_t) - g_i(x_t;\B_t^{(i)})\|_*((1+\theta)\|g_i(x_t) - g_i(x_t;\B_t^{(i)})\|_* + \theta \|g_i(x_{t-1}) - g_i(x_{t-1};\B_t^{(i)})\|_*)\right]\\
& \leq \frac{1}{n \tau\mu_\psi}\sum_{i=1}^n \E\left[(1+1.5\theta)\|g_i(x_t) - g_i(x_t;\B_t^{(i)})\|_*^2 + 0.5\theta \|g_i(x_{t-1}) - g_i(x_{t-1};\B_t^{(i)})\|_*^2\right]\leq \frac{(1+2\theta)\sigma_0^2}{\tau B \mu_\psi}. 
\end{align*}
Apply Lemma~\ref{lem:mds_ip} to the term $\frac{1}{n}\sum_{i=1}^n (g_i(x_t) - g_i(x_t;\B_t^{(i)}))^\top y^{(i)}$. For any $\lambda_2>0$ and the auxiliary sequence $\{\hhat{y}_t\}_{t\geq 0}$ constructed as $\hhat{y}_{t+1}^{(i)} = \argmin_{v\in\Y_i} \{v^\top(g_i(x_t;\B_t^{(i)}) - g_i(x_t)) + \lambda_2 U_{\psi_i}(v,\hhat{y}_t^{(i)})\}$ for each $i\in[n]$, we have
\begin{align*}
\frac{1}{n}\sum_{i=1}^n (g_i(x_t) - g_i(x_t;\B_t^{(i)}))^\top y^{(i)} \leq \frac{\lambda_2}{n}\E[U_\psi(y,\hhat{y}_t) - U_\psi(y,\hhat{y}_{t+1})] + \frac{1}{2\lambda_2 \mu_\psi n} \E\|g(x_t) - g(x_t;\B_t^{(i)})\|_*^2.
\end{align*}
Lastly, $\E[ (g_i(x_t) - g_i(x_t;\B_t^{(i)}))^\top  \dot{y}_{t+1}^{(i)}\mid \F_{t-1}] = 0$. Choose $\theta=1$. Then, the I term in \eqref{eq:diamond_de_starter} can be bounded as
\begin{align}\label{eq:diamond_I}
\E[\text{I}] & \leq  \frac{2\lambda_2 }{n}\E[U_\psi(y,\hhat{y}_t) - U_\psi(y,\hhat{y}_{t+1})] + \frac{\sigma_0^2}{\lambda_2 \mu_\psi B} + \frac{6\sigma_0^2}{\tau\mu_\psi B}.
\end{align}
Define $\Gamma_t\coloneqq \frac{1}{n} \sum_{i=1}^n (g_i(x_t) - g_i(x_{t-1}))^\top (y^{(i)}-y_t^{(i)})$. For any $\lambda_3, \lambda_4>0$, II + III can be rewritten as
\begin{align*}
\text{II + III} &= \frac{1}{n} \sum_{i=1}^n g_i(x_{t+1})^\top (y^{(i)}-\bar{y}_{t+1}^{(i)})  -  \frac{1}{n} \sum_{i=1}^n g_i(x_t)^\top (y^{(i)}-\bar{y}_{t+1}^{(i)}) + \frac{1}{n}\sum_{i=1}^n (g_i(x_{t-1}) - g_i(x_t))^\top (y^{(i)}-\bar{y}_{t+1}^{(i)})\\
& =  \Gamma_{t+1} - \Gamma_t + \frac{1}{n} \sum_{i=1}^n (g_i(x_{t+1}) - g_i(x_t))^\top (y_{t+1}^{(i)}-\bar{y}_{t+1}^{(i)}) + \frac{1}{n}\sum_{i=1}^n (g_i(x_{t-1}) - g_i(x_t))^\top (y_t^{(i)} - \bar{y}_{t+1}^{(i)})\\
& \leq \Gamma_{t+1} - \Gamma_t + \frac{C_g^2 \Norm{x_{t+1} - x_t}_2^2}{2\lambda_3} + \frac{\lambda_3\Norm{y_{t+1} - \bar{y}_{t+1}}^2}{2 n} + \frac{C_g^2 \Norm{x_t - x_{t-1}}_2^2}{2\lambda_4} + \frac{\lambda_4\Norm{y_t - \bar{y}_{t+1}}^2}{2 n}
\end{align*}
Note that $y_{t+1}^{(i)} = \bar{y}_{t+1}^{(i)}$ if $i\in \S_t$ and $y_{t+1}^{(i)} = y_t^{(i)}$ otherwise. Then, $\Norm{y_{t+1} - \bar{y}_{t+1}}^2 \leq \Norm{y_t - \bar{y}_{t+1}}^2$ such that 
\begin{align}\label{eq:diamond_II-III}
\text{II + III} &\leq \Gamma_{t+1} - \Gamma_t + \frac{C_g^2 \Norm{x_{t+1} - x_t}_2^2}{2\lambda_3}  + \frac{C_g^2 \Norm{x_t - x_{t-1}}_2^2}{2\lambda_4} + \frac{(\lambda_3 + \lambda_4)U_\psi(\bar{y}_{t+1},y_t)}{\mu_\psi n }.
\end{align}
We decompose the IV term in \eqref{eq:diamond_de_starter} as
\begin{align*}
 & \mathrm{IV} = \frac{1}{n}\sum_{i=1}^n (g_i(x_{t-1};\B_t^{(i)}) - g_i(x_{t-1}))^\top (y^{(i)}-\bar{y}_{t+1}^{(i)})\\
& = \frac{1}{n}\sum_{i=1}^n \left\{(g_i(x_{t-1};\B_t^{(i)}) - g_i(x_{t-1}))^\top (\dot{y}_{t+1}^{(i)}-\bar{y}_{t+1}^{(i)}) \right.\\
& \quad\quad \quad\quad \quad\quad \left.+ (g_i(x_{t-1};\B_t^{(i)}) - g_i(x_{t-1}))^\top y^{(i)} - (g_i(x_{t-1};\B_t^{(i)}) - g_i(x_{t-1}))^\top \dot{y}_{t+1}^{(i)}\right\}.
\end{align*}
By the Cauchy-Schwarz inequality, we have
\begin{align*}
& \frac{1}{n}\sum_{i=1}^n \E\left[(g_i(x_{t-1};\B_t^{(i)}) - g_i(x_{t-1}))^\top (\dot{y}_{t+1}^{(i)}-\bar{y}_{t+1}^{(i)})\right] \leq  \frac{1}{n}\sum_{i=1}^n \E\left[\|g_i(x_{t-1};\B_t^{(i)}) - g_i(x_{t-1})\|_* \|\dot{y}_{t+1}^{(i)}-\bar{y}_{t+1}^{(i)}\|\right].
\end{align*}
Since $f_i^*(y^{(i)}) + \tau U_{\psi_i}(y^{(i)},y_t^{(i)})$ is $\tau\mu_\psi$-strongly convex to $y^{(i)}$, Lemma~\ref{lem:prox_virtual} implies that
\begin{align*}
& \|\dot{y}_{t+1}^{(i)}-\bar{y}_{t+1}^{(i)}\| \leq \frac{(1+\theta)\|g_i(x_t) - g_i(x_t;\B_t^{(i)})\|_* + \theta \|g_i(x_{t-1}) - g_i(x_{t-1};\B_t^{(i)})\|_*}{\tau\mu_\psi}.
\end{align*}
Similar to \eqref{eq:diamond_I}, the following holds for any $\lambda_5>0$ and the auxiliary sequence $\{\breve{y}_t\}_{t\geq 0}$ that is constructed as $\breve{y}_{t+1}^{(i)} = \argmin_{v\in\Y_i} \{v^\top (g_i(x_{t-1};\B_t^{(i)}) - g_i(x_{t-1}))+ \lambda_2 U_{\psi_i}(v,\breve{y}_t^{(i)})\}$ for each $i\in[n]$.
\begin{align*}
\frac{1}{n}\sum_{i=1}^n \E \left[(g_i(x_{t-1};\B_t^{(i)}) - g_i(x_{t-1}))^\top y^{(i)} \right] \leq  \frac{\lambda_5}{n}\E[U_\psi(y,\breve{y}_t) - U_\psi(y,\breve{y}_{t+1})] + \frac{\sigma_0^2}{2\lambda_5\mu_\psi B}.
\end{align*}
Consider that $ \frac{1}{n}\sum_{i=1}^n \E [(g_i(x_{t-1};\B_t^{(i)}) - g_i(x_{t-1}))^\top \dot{y}_{t+1}^{(i)}] = 0$.
\begin{align}\label{eq:diamond_IV}
\E[\text{IV}] & \leq  \frac{\lambda_5}{n}\E[U_\psi(y,\hat{y}_t) - U_\psi(y,\hat{y}_{t+1})] + \frac{\sigma_0^2}{2\lambda_5 \mu_\psi B} + \frac{3\sigma_0^2}{\tau\mu_\psi B}.
\end{align}
Combine \eqref{eq:diamond_I}, \eqref{eq:diamond_II-III}, and \eqref{eq:diamond_IV}.
\begin{align*}
& \frac{1}{n}\sum_{i=1}^n\E (g_i(x_{t+1}) - \tilde{g}_t^{(i)})^\top (y^{(i)}- \bar{y}_{t+1}^{(i)}) \\
& \leq \E[\Gamma_{t+1} - \Gamma_t]+ \frac{2\lambda_2 }{n}\E[U_\psi(y,\hhat{y}_t) - U_\psi(y,\hhat{y}_{t+1})] + \frac{\lambda_5}{n}\E[U_\psi(y,\breve{y}_t) - U_\psi(y,\breve{y}_{t+1})] \\
& \quad + \frac{( \lambda_3 + \lambda_4) \E[U_\psi(\bar{y}_{t+1},y_t)]}{\mu_\psi n} + \frac{C_g^2 \E\Norm{x_{t+1} - x_t}^2}{2\lambda_3}  + \frac{C_g^2 \E \Norm{x_t - x_{t-1}}^2}{2\lambda_4} + \frac{9\sigma_0^2}{\tau\mu_\psi B} + \frac{\sigma_0^2}{\lambda_2 \mu_\psi B}  + \frac{\sigma_0^2}{2\lambda_5 \mu_\psi B}.
\end{align*}
\end{proof}

\subsubsection{Proof of Theorem~\ref{thm:ALEXR}}

\begin{proof}
If $g_i$ is smooth, we combine \eqref{eq:starter_de}, 
 \eqref{eq:club_de}, \eqref{eq:triangle_de}, \eqref{eq:diamond_de}. Set $x=x_*$ and $x_0 = x_{-1}$.
\begin{align}\nonumber
& \E[L(x_{t+1}, y_{t+1}) - L(x_*,\bar{y}_{t+1})]\\\nonumber
& \leq \frac{\tau}{S}\E[U_\psi(y,y_t) - U_\psi(y,y_{t+1})] + \frac{\tau \lambda_1}{S}\E[U_\psi(y,\hat{y}_t) - U_\psi(y,\hat{y}_{t+1})] + \frac{\eta}{2}\E\Norm{x_*-x_t}_2^2 - \frac{\eta}{2} \E\Norm{x_*-x_{t+1}}_2^2\\\nonumber
& \quad\quad + \E[\Gamma_{t+1} - \Gamma_t]+ \frac{2\lambda_2 }{n}\E[U_\psi(y,\hhat{y}_t) - U_\psi(y,\hhat{y}_{t+1})] + \frac{\lambda_5}{n}\E[U_\psi(y,\breve{y}_t) - U_\psi(y,\breve{y}_{t+1})] \\\nonumber
& \quad\quad - \left(\frac{\tau}{n} - \frac{\tau L_\psi^2}{n\lambda_1\mu_\psi^2 S}- \frac{\lambda_3+\lambda_4}{\mu_\psi n}\right) \E[U_\psi(\bar{y}_{t+1},y_t)] - \left(\frac{\eta}{2}-\frac{C_g^2}{2\lambda_3} - \textcolor{blue}{\frac{L_g C_f}{2}}\right) \E\Norm{x_{t+1} - x_t}_2^2 + \frac{C_g^2}{2\lambda_4} \E\Norm{x_t - x_{t-1}}_2^2\\\label{eq:convex_overall}
&\quad\quad + \frac{9\sigma_0^2}{\tau\mu_\psi B} + \frac{\sigma_0^2}{\lambda_2 \mu_\psi B}  + \frac{\sigma_0^2}{2\lambda_5 \mu_\psi B}  + \frac{C_f^2\sigma_1^2}{\eta B} + \frac{\delta^2}{\eta S}.
\end{align}
Do telescoping sum from $t=0$ to $T-1$ for the equation above.
\begin{align*}
& \sum_{t=0}^{T-1} \E [L(x_{t+1},y) - L(x_*, \bar{y}_{t+1})]  \\
& \leq  \frac{\eta \E\Norm{x_* - x_0}_2^2 }{2} + \frac{\tau}{S} \E[U_\psi(y,y_0)] + \frac{\tau\lambda_1}{S}  \E[U_\psi(y,\hat{y}_0)] + \frac{2\lambda_2}{n}\E[U_\psi(y,\hhat{y}_0)] +\frac{\lambda_5}{n}\E[U_\psi(y,\breve{y}_0)]\\
& \quad\quad -\left(\frac{\tau}{n} - \frac{\tau L_\psi^2}{n \lambda_1 \mu_\psi^2  S}- \frac{\lambda_3 + \lambda_4}{\mu_\psi n}\right) \sum_{t=0}^{T-1}\E[U_\psi(\bar{y}_{t+1}, y_t)] - \left(\frac{\eta}{2} -  \frac{L_g C_f}{2} - \frac{C_g^2}{2\lambda_3} - \frac{C_g^2}{2\lambda_4}\right)\sum_{t=0}^{T-1} \E\Norm{x_{t+1} - x_t}^2\\
& \quad\quad + \E[\Gamma_T] - \frac{\tau}{S} \E[U_\psi(y,y_T)] + \left(\frac{C_f^2\sigma_1^2}{B \eta} + \frac{\delta^2}{S \eta}\right) T + \frac{9\sigma_0^2  T}{\tau\mu_\psi B} + \frac{\sigma_0^2 T}{\lambda_2 \mu_\psi B}  + \frac{\sigma_0^2 T}{2\lambda_5 \mu_\psi B}.
\end{align*}
Note that $\Gamma_0=0$, $\Gamma_T \leq \frac{1}{n}\sum_{i=1}^n \Norm{g_i(x_T) - g_i(x_{T-1})}_*\Norm{y^{(i)} - y_T^{(i)}}\leq  \frac{C_g^2}{2\lambda_3}\Norm{x_T - x_{T-1}}_2^2 + \frac{\lambda_3}{2n \mu_\psi} U_\psi(y,y_T)$.  Choose $\lambda_1 \asymp \frac{L_\psi^2}{S\mu_\psi^2}$, $\lambda_2 \asymp \frac{n\tau}{S}$, $\lambda_3 \asymp\frac{C_g\sqrt{S}}{\sqrt{n}}$, $\lambda_4 \asymp\frac{C_g\sqrt{S}}{\sqrt{n}}$, $\lambda_5 \asymp\frac{n\tau}{S}$, and let $1/\tau = O\left(\frac{\sqrt{n}\mu_\psi}{C_g\sqrt{S}}\right)$ and $1/\eta = O\left(\frac{\sqrt{S}}{C_g\sqrt{n}}\right)$. Since $L(x,y)$ is convex in $x$ and linear in $y$, we have
\begin{align}\label{eq:cvx_gap}
 \E \max_y [L(\bar{x}_T, y) - L(x_*,\bar{\bar{y}}_T)] \leq \E\max_y \frac{1}{T}\sum_{t=0}^{T-1} [L(x_{t+1},y) - L(x_*, \bar{y}_{t+1})],
\end{align}
where $\bar{x}_T = \frac{1}{T}\sum_{t=0}^{T-1}x_{t+1}$, $\bar{\bar{y}}_T = \frac{1}{T}\sum_{t=0}^{T-1}\bar{y}_{t+1}$. Now work on the LHS.
\begin{align*}
& L(\bar{x}_T, y) - L(x_*,\bar{\bar{y}}_T) = \frac{1}{n}\sum_{i=1}^n \left(y^{(i)}g_i(\bar{x}_T) - f_i^*(y^{(i)})\right) + r(\bar{x}_T) - \frac{1}{n}\sum_{i=1}^n \left(\bar{\bar{y}}^{(i)}_Tg_i(x_*) - f_i^*(\bar{\bar{y}}^{(i)}_T)\right) - r(x_*)
\end{align*}
Choose $y^{(i)}=\tilde{y}_T^{(i)} \in \argmax_{v} \{v g_i(\bar{x}_T) - f_i^*(v)\}\Leftrightarrow g_i(\bar{x}_T) \in \partial  f_i^*(\tilde{y}_T^{(i)})\Leftrightarrow \tilde{y}_T^{(i)} \in \partial f_i(g_i(\bar{x}_T))$ such that $\tilde{y}_T^{(i)}g_i(\bar{x}_T) - f_i^*(\tilde{y}_T^{(i)}) = f_i(g_i(\bar{x}_T))$. By Fenchel-Young, $- \bar{\bar{y}}_T^{(i)}g_i(x_*) + f_i^*(\bar{\bar{y}}_T^{(i)}) \geq -f_i(g_i(x_*))$. Thus, $\E[F(\bar{x}_T) - F(x_*)] \leq \E \max_y [L(\bar{x}_T, y) - L(x_*,\bar{\bar{y}}_T)]$. Thus, we can make $\E [F(\bar{x}_T) - F(x_*)] \leq \epsilon$ after $T = O\left(\frac{L_g C_f D_{\X}^2}{\epsilon} + \frac{\sqrt{n} C_g D_{\X}^2}{\sqrt{S}\epsilon} + \frac{C_g (1+L_\psi^2/(S\mu_\psi^2))D_\Y^2}{\mu_\psi \sqrt{n S}\epsilon} + \frac{D_\X^2\delta^2}{S\epsilon^2} + \frac{D_\X^2 C_f^2 \sigma_1^2}{B\epsilon^2} + \frac{\sigma_0^2(1+L_\psi^2/(S\mu_\psi^2))D_\Y^2)}{\mu_\psi BS\epsilon^2}\right)$ iterations by setting $\theta=1$, $\tau = O\left(\frac{\sqrt{S}C_g}{\mu_\psi\sqrt{n}} \lor \frac{\sigma_0^2}{\mu_\psi B \epsilon}\right)$, $\eta = O\left(L_g C_f \lor \frac{\sqrt{n}C_g}{\sqrt{S}} \lor \frac{\delta^2}{S\epsilon} \lor \frac{C_f^2 \sigma_1^2}{B \epsilon}\right)$.
\end{proof}

\subsubsection{Proof of Theorem~\ref{thm:ALEXR_nsmth}}
\begin{proof}
If $g_i$ is non-smooth, we can use ALEXR with $\theta=0$, where $\tilde{g}_t^{(i)}=g_i(x_t ; \mathcal{B}_t^{(i)})$. Then, for any $\lambda > 0$ we have
\begin{align*}
& \frac{1}{n} \sum_{i=1}^n \E\left\langle g_i\left(x_{t+1}\right)-\tilde{g}_t, y^{(i)}-\bar{y}_{t+1}^{(i)}\right\rangle=\frac{1}{n} \sum_{i=1}^n \E\left\langle g_i\left(x_{t+1}\right)-g_i(x_t ; \mathcal{B}_t^{(i)}), y^{(i)}-\bar{y}_{t+1}^{(i)}\right\rangle \\
& =\frac{1}{n} \sum_{i=1}^n \E\left[\left\langle g_i\left(x_{t+1}\right)-g_i\left(x_t\right), y^{(i)}-\bar{y}_{t+1}^{(i)}\right\rangle\right]+\frac{1}{n} \sum_{i=1}^n \E\left[\left\langle g_i\left(x_t\right)-g_i(x_t ; \mathcal{B}_t^{(i)}), y^{(i)}-\bar{y}_{t+1}^{(i)}\right\rangle\right] \\
& \leq \frac{1}{n} \sum_{i=1}^n \E\left[\left\|g_i\left(x_{t+1}\right)-g_i(x_t)\right\|_*\left\|y^{(i)}-\bar{y}_{t+1}^{(i)}\right\|\right]+\frac{1}{n} \sum_{i=1}^n \E\left[\left\langle g_i\left(x_t\right)-g_i(x_t ; \mathcal{B}_t^{(i)}), y^{(i)}-\bar{y}_{t+1}^{(i)}\right\rangle\right] \\
& \leq \frac{C_g^2 \E\left\|x_{t+1}-x_t\right\|_2^2}{2 \lambda_4}+2 \lambda_4 \frac{1}{n} \sum_{i=1}^n \E[\|y^{(i)}\|^2+\|\bar{y}_{t+1}^{(i)}\|^2]+\frac{1}{n} \sum_{i=1}^n \E\left[\left\langle g_i\left(x_t\right)-g_i(x_t ; \mathcal{B}_t^{(i)}), y^{(i)}-\bar{y}_{t+1}^{(i)}\right\rangle\right] \\
& \leq \frac{C_g^2 \E\left[\left\|x_{t+1}-x_t\right\|_2^2\right]}{2 \lambda_4}+4 \lambda_4 C_f^2+\frac{1}{n} \sum_{i=1}^n \E\left[\left\langle g_i\left(x_t\right)-g_i(x_t ; \mathcal{B}_t^{(i)}), y^{(i)}-\bar{y}_{t+1}^{(i)}\right\rangle\right] .
\end{align*}
The last term above can be decomposed as
\begin{align}\nonumber
& \frac{1}{n} \sum_{i=1}^n \E\left[\left\langle g_i\left(x_t\right)-g_i\left(x_t ; \mathcal{B}_t^{(i)}\right), y^{(i)}-\bar{y}_{t+1}^{(i)}\right\rangle\right] \\\label{eq:theta0_decomp}
& =\frac{1}{n} \sum_{i=1}^n \E_t\left[\left\langle g_i\left(x_t\right)-g_i\left(x_t ; \mathcal{B}_t^{(i)}\right), y^{(i)}-y_t^{(i)}\right\rangle\right]+\frac{1}{n} \sum_{i=1}^n \E\left[\left\langle g_i\left(x_t\right)-g_i(x_t ; \mathcal{B}_t^{(i)}), y_t^{(i)}-\bar{y}_{t+1}^{(i)}\right\rangle\right].
\end{align}
Note that $\E\left[\left\langle g_i\left(x_t\right)-g_i(x_t ; \mathcal{B}_t^{(i)}), y_t^{(i)}\right\rangle \mid \mathcal{F}_{t-1}\right]=0$. Besides, Lemma~\eqref{lem:dual} implies that for some $\lambda_2 > 0$ and sequence $\{\tilde{y}_t\}_t$
\begin{align*}
\E\left[\left\langle g_i\left(x_t\right)-g_i(x_t ; \mathcal{B}_t^{(i)}), y^{(i)}\right\rangle\right] \leq \E\left[\lambda_2 U_{\psi_i}(y^{(i)}, \tilde{y}_t^{(i)})-\lambda_2 U_{\psi_i}(y^{(i)}, \tilde{y}_{t+1}^{(i)})\right]+\frac{1}{2 \lambda_2 \mu_\psi} \E\left\|g_i\left(x_t\right)-g_i(x_t ; \mathcal{B}_t^{(i)})\right\|_*^2.
\end{align*}
such that
\begin{align*}
\frac{1}{n} \sum_{i=1}^n \E\left[\left\langle g_i\left(x_t\right)-g_i(x_t ; \mathcal{B}_t^{(i)}), y^{(i)}\right\rangle\right] \leq \frac{\lambda_2}{n} \E\left[U_\psi\left(y, \tilde{y}_t\right)-U_\psi\left(y, \tilde{y}_{t+1}\right)\right]+\frac{\sigma_0^2}{2 \lambda_2 B \mu_\psi} .
\end{align*}
For any $\lambda_3 > 0$, the second term in \eqref{eq:theta0_decomp} can be bounded as
\begin{align*}
\frac{1}{n} \sum_{i=1}^n \E\left[\left\langle g_i\left(x_t\right)-g_i(x_t ; \mathcal{B}_t^{(i)}), y_t^{(i)}-\bar{y}_{t+1}^{(i)}\right\rangle\right] & \leq \frac{\lambda_3}{2 n} \sum_{i=1}^n \E\left[\left\|g_i\left(x_t\right)-g_i(x_t ; \mathcal{B}_t^{(i)})\right\|_*^2\right]+\frac{\E\left[\left\|y_t-\bar{y}_{t+1}\right\|^2\right]}{2 \lambda_3 n} \\
& \leq \frac{\lambda_3 \sigma_0^2}{2 B}+\frac{\E\left[U_\psi\left(\bar{y}_{t+1}, y_t\right)\right]}{\lambda_3 \mu_\psi n}.
\end{align*}
Thus, we have
\begin{align*} \E\left[\frac{1}{n} \sum_{i=1}^n\left\langle g_i\left(x_{t+1}\right)-\tilde{g}_t, y^{(i)}-\bar{y}_{t+1}^{(i)}\right\rangle\right] & \leq \frac{C_g^2 \E\left[\left\|x_{t+1}-x_t\right\|_2^2\right]}{2 \lambda_4}+4 \lambda_4 C_f^2+\frac{\lambda_2}{n} \E\left[U_\psi\left(y, \tilde{y}_t\right)-U_\psi\left(y, \tilde{y}_{t+1}\right)\right] \\
& \quad\quad +\frac{\sigma_0^2}{2 B \lambda_2 \mu_\psi}+\frac{\lambda_3 \sigma_0^2}{2 B}+\frac{\E\left[U_\psi\left(\bar{y}_{t+1}, y_t\right)\right]}{2 \lambda_3 \mu_\psi n}.
\end{align*}
The other parts are the same as as the proof of Theorem~\ref{thm:ALEXR}. 

\end{proof}

\section{Proof of Theorem~\ref{thm:lower}}\label{sec:lower_proof}

We consider a special instance of problem \eqref{eq:primal} that is separable over the coordinates $i$ and $\mathbb{P}_i=\mathbb{P}$. 
\begin{align}\label{eq:hard_prob}
\min_{x\in[-D,D]^n} F(x), \quad & F(x) = \frac{1}{n} \left(\sum_{i=1}^n f(g_i(x)) + \frac{\alpha}{2} \Norm{x}^2\right),\\\nonumber
& g_i(x) = \E_{\zeta \sim \mathbb{P}}[g_i(x;\zeta)],~g_i(x;\zeta) = x^{(i)}+\zeta,
\end{align}
where the additive noise $\zeta$ follows
\begin{align*}
\zeta =\begin{cases}
-\nu & \text{w.p. } 1-p,\\
\nu(1-p)/p & \text{w.p. } p.
\end{cases},\quad \text{where }p\coloneqq \frac{\nu^2}{\sigma^2} \in (0,1).   
\end{align*}

We construct the hard problems for (i) smooth $f_i$; and (ii) non-smooth $f_i$ separately. 

\textbf{(i) Smooth $f_i$:} First, we can consider the special instance that $f_i$ is the identity mapping and $\delta=0$ (e.g., all $f_i$'s are identical), $\sigma_0=0$. Then, the cFCCO problem in \eqref{eq:primal} becomes the standard strongly convex minimization problem. Then, we can apply the information-theoretic lower bounds~\cite{agarwal2009information,nguyen2019tight}: Any algorithm in the abstract scheme requires at least $\Omega(\frac{1}{\mu\epsilon})$ iterations to find an $\bar{x}$ such that $\frac{\mu}{2}\E\Norm{\bar{x}-x_*}_2^2\leq \epsilon$.

\begin{figure}[ht]
\centering    
\subfigure[Visualization of $f$ in \eqref{eq:hard_prob_smooth}]{\label{fig:f_val}\includegraphics[width=75mm]{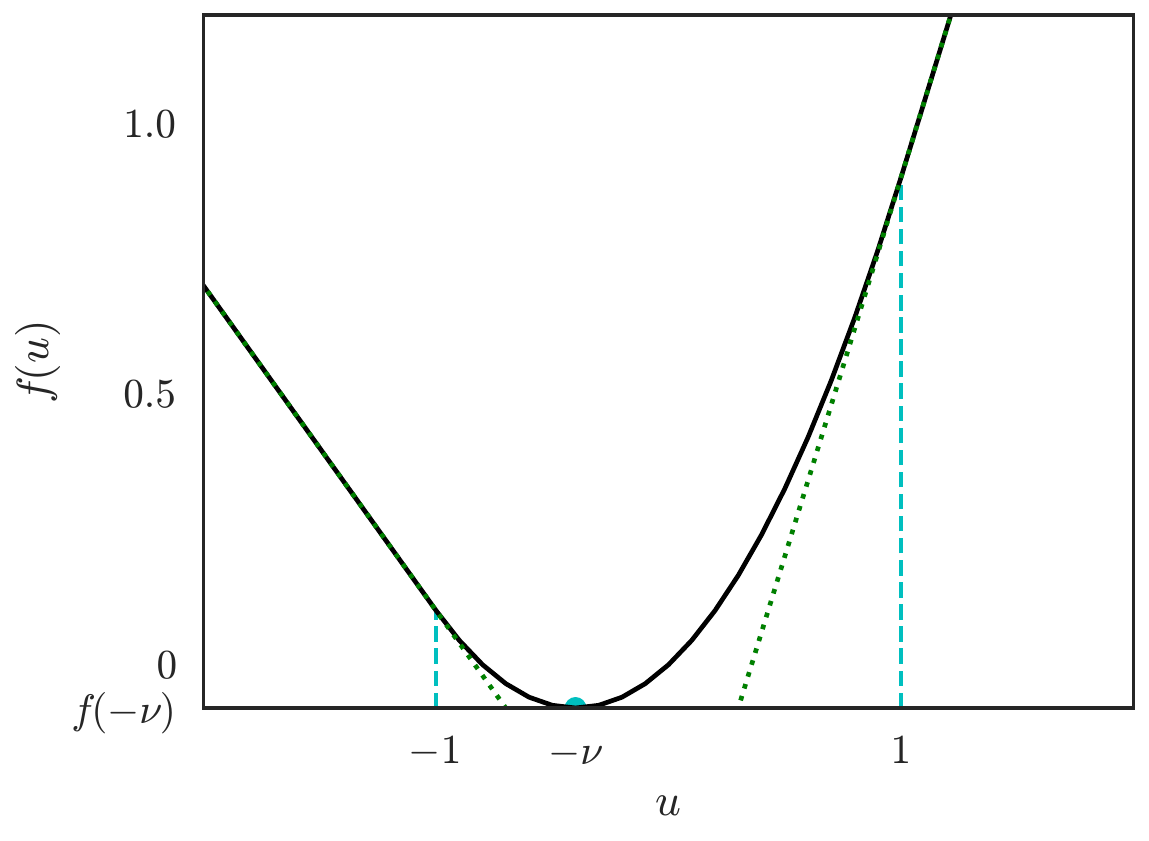}}
\subfigure[Convex conjugate $f^*$ in \eqref{eq:f_conj} of $f$. Note that $f^*(y^{(i)}) = +\infty$ in grey areas.]{\label{fig:f_conj}\includegraphics[width=75mm]{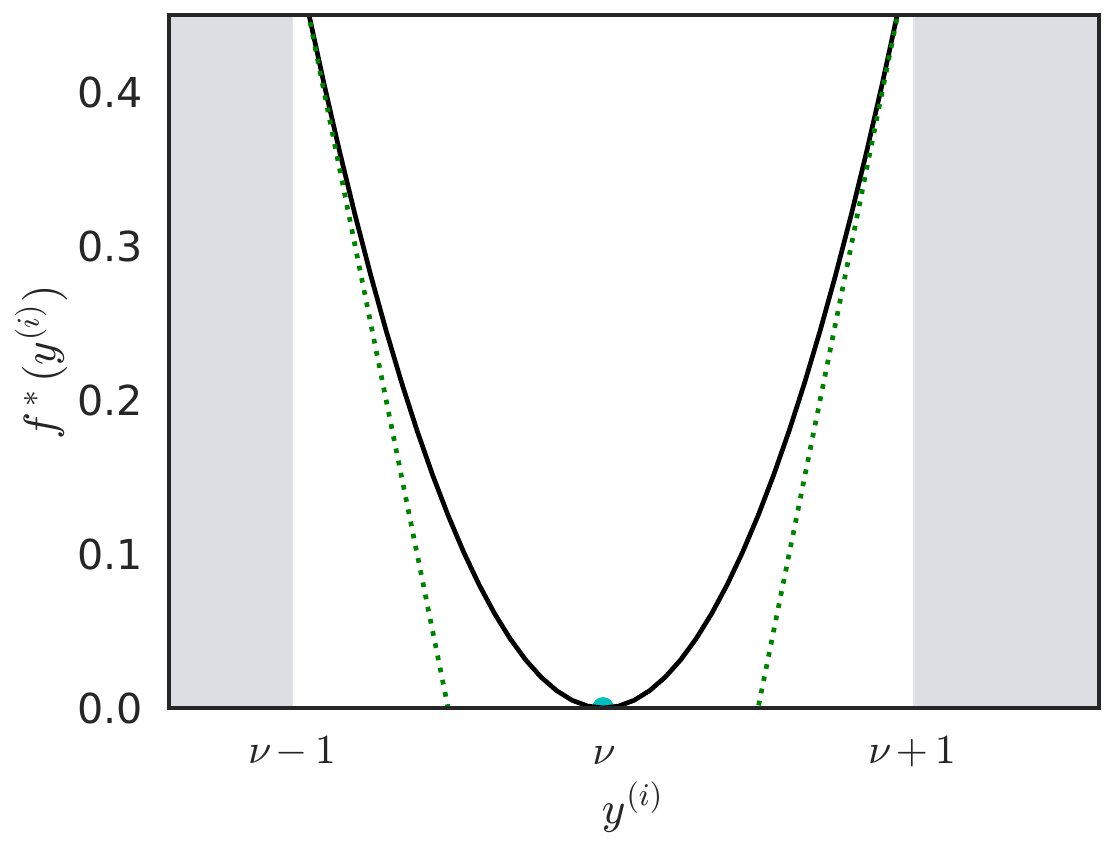}}
\end{figure}
Next, we construct another ``hard'' instance to derive the second half of the lower bound in this case. Consider the following strongly convex FCCO problem
\begin{align}\nonumber
\min_{x\in\X} & F(x) = \frac{1}{n}\sum_{i=1}^n f(g_i(x))+r(x),\\\label{eq:hard_prob_smooth}
& f(u) =\begin{cases} 
(\nu-1)u + \frac{1}{2}(\nu-1)^2 + \nu - 1 - \frac{\nu^2}{2}, & u\in(-\infty,-1)\\
\frac{1}{2}(u+\nu)^2- \frac{\nu^2}{2}, & u\in[-1,1]\\
(1+\nu)u + \frac{1}{2}(1+\nu)^2-1-\nu - \frac{\nu^2}{2},& u \in (1,\infty)
\end{cases},\quad r(x) = \frac{1}{4n}\Norm{x}_2^2
\end{align}
where $\X=[-1,1]^n$, the outer function $f:\R\rightarrow \R$ is smooth and Lipschitz continuous for $\nu < 1$. As stated in Assumption~\ref{asm:outer}, we do not require $f$ to be monotonically non-decreasing when $g_i$ is affine. Choose $\alpha = \frac{1}{2}$ in \eqref{eq:hard_prob}. We define that $F_i(x^{(i)}) \coloneqq f(g_i(x)) + \frac{1}{4} [x^{(i)}]^2$ such that $F(x) = \frac{1}{n}\sum_{i=1}^n F_i(x^{(i)})$. Thus, the problem $\min_x F(x)$ is equivalent to the problems $\min_{x^{(i)}}F_i(x^{(i)})$ on all coordinates $i\in[n]$. Since the problem is separable over the coordinates, we have $x_*^{(i)} = \argmin_{x\in[-1,1]} F_i(x^{(i)})$ for $x_* = \argmin_{x\in\X} F(x)$. Thus, we have $x_*^{(i)} = -\frac{2\nu}{3}$ and $F_i(x_*^{(i)}) = - \frac{\nu^2}{3}$. By the convex conjugate, for any $y^{(i)}\in\R$ we have
\begin{align}\nonumber
f^*(y^{(i)}) & = \max\left\{\sup_{u<-1}\left\{uy^{(i)} - \left((\nu-1)u + \frac{1}{2}(\nu-1)^2 + \nu-1 - \frac{\nu^2}{2}\right) \right\}, \sup_{-1\leq u\leq 1}\left\{uy^{(i)} - \frac{1}{2}(u+\nu)^2 + \frac{\nu^2}{2}\right\},\right.\\\nonumber
& \quad\quad\quad\quad\quad\quad \left.\sup_{u>1}\left\{uy^{(i)} - \left((1+\nu)u + \frac{1}{2}(1+\nu)^2 -1-\nu - \frac{\nu^2}{2}\right)\right\}\right\}\\
& = \begin{cases}
+\infty, & y^{(i)} \in (-\infty,\nu-1)\cup (\nu+1,\infty)\\\label{eq:f_conj}
\frac{1}{2}(y^{(i)}-\nu)^2, & y^{(i)}\in[\nu-1,\nu+1].
\end{cases}
\end{align} 
Since $\mathbb{P}_i = \mathbb{P}$ in the ``hard'' problem~\eqref{eq:hard_prob}, the abstract scheme only needs to sample shared $\zeta_t,\tilde{\zeta}_t \sim \mathbb{P}$ for all coordinates $i\in\S_t$ in the $t$-th iteration. For an $i\in[n]$, suppose that $\mathfrak{g}_\tau^{(i)} = \{0\}$ or $\{-\nu\}$, $\mathfrak{Y}_\tau^{(i)} = \{0\}$, $\mathfrak{X}_\tau^{(i)}=\{0\}$ for all $\tau\leq t$. Then,

$\bullet$ If $i\notin\S_t$, the abstract scheme leads to
\begin{align*}
& \mathfrak{g}_{t+1}^{(i)} = \{0\}~\text{or}~\{-\nu\},\quad \mathfrak{Y}_{t+1}^{(i)}  = \{0\},\quad \mathfrak{X}_{t+1}^{(i)} =\{0\}.
\end{align*}
$\bullet$ If $i\in\S_t$ and $\zeta_t = -\nu$, the abstract scheme proceeds as
\begin{align*}
& \mathfrak{g}_{t+1}^{(i)} = \mathfrak{g}_t^{(i)} + \mathrm{span}\left\{\hat{x}^{(i)} + \zeta_t\mid \hat{x}^{(i)} \in \mathfrak{X}_t^{(i)}\right\},\\
& \mathfrak{Y}_{t+1}^{(i)} = \mathfrak{Y}_t^{(i)} + \mathrm{span}\left\{\argmax_{y^{(i)}\in[\nu-1,\nu+1]}\left\{y^{(i)}(\hat{g}^{(i)} + \nu)  - \frac{1}{2}(y^{(i)})^2 - \tau U_{\psi_i}(y^{(i)}, \hat{y}^{(i)}) \right\}\mid \hat{g}^{(i)} \in \mathfrak{g}_{t+1}^{(i)},\hat{y}^{(i)}\in\mathfrak{Y}_t^{(i)}\right\},\\
& \mathfrak{X}_{t+1}^{(i)} = \mathfrak{X}_t^{(i)} + \mathrm{span}\left\{\argmin_{x^{(i)}\in[-1,1]}\left\{\frac{1}{S} \hat{y}^{(i)} x^{(i)}  + \frac{1}{n}(x^{(i)})^2 + \frac{\eta}{2} (x^{(i)}-\hat{x}^{(i)})^2 \right\}\mid \hat{y}^{(i)}\in \mathfrak{Y}_{t+1}^{(i)},\hat{x}^{(i)}\in\mathfrak{X}_t^{(i)}\right\}.
\end{align*}
In this case, we can derive that $\mathfrak{g}_{t+1}^{(i)}=\{-\nu\}$ such that $y^{(i)}(\hat{g}^{(i)} + \nu) = 0$ for $\hat{g}^{(i)} \in \mathfrak{g}_{t+1}^{(i)}$, $\forall i\in\S_t$. Since $\frac{1}{2}(y^{(i)})^2 + \tau U_{\psi_i}(y^{(i)}, \hat{y}^{(i)})$ is strongly convex to $y^{(i)}$ and non-negative, we have $\mathfrak{Y}_{t+1}^{(i)} = \{0\}$ for $i\in\S_t$. Then, $\mathfrak{X}_{t+1}^{(i)}=\{0\}$ for $i\in\S_t$.

To sum up, given the event $\bigcap_{\tau=1}^t\{\mathfrak{g}_\tau^{(i)} = \{0\}~\text{or}~\{-\nu\}$, $\mathfrak{Y}_\tau^{(i)} = \{0\}$, $\mathfrak{X}_\tau^{(i)}=\{0\}\}$, we can make sure that $\{\mathfrak{g}_{t+1}^{(i)}=\{0\}~\text{or}~\{-\nu\} \land \mathfrak{Y}_{t+1}^{(i)} = \{0\} \land \mathfrak{X}_{t+1}^{(i)} =\{0\}\}$ when one of the following mutually exclusive events happens: 
\begin{itemize}
    \item Event I: $i\notin \S_t$;
    \item Event II: $i\in \S_t$ and $\zeta_t = -\nu$.
\end{itemize}
Note that the random variable $\zeta_t$ is independent of $\S_t$. Thus, the probability of the event $E_{t+1}^{(i)}\coloneqq \{\mathfrak{g}_{t+1}^{(i)}=\emptyset~\text{or}~\{-\nu\} \land \mathfrak{Y}_{t+1}^{(i)} = \{0\} \land \mathfrak{X}_{t+1}^{(i)} =\{0\}\}$ conditioned on $\bigcap_{\tau=1}^t E_\tau^{(i)}$ can be bounded as
\begin{align*}
\P\left[E_{t+1}^{(i)}\mid \bigcap_{\tau=1}^t E_\tau^{(i)}\right] &= \P\left[\left\{\mathfrak{g}_{t+1}^{(i)}=\{0\}~\text{or}~\{-\nu\} \land \mathfrak{Y}_{t+1}^{(i)} = \{0\} \land \mathfrak{X}_{t+1}^{(i)} =\{0\}\right\} \mid \bigcap_{\tau=1}^t E_\tau^{(i)}\right] \\
& \geq \P\left[\left\{i\notin \S_t\right\}\right] + \P\left[\left\{\{i\in \S_t\} \land \{\zeta_t = -\nu\}\right\}\right]  \\
& = \P\left[\{i\notin \S_t\}\right] + \P\left[\{i\in \S_t\}\right] \P\left[\{\zeta_t = -\nu\}\right] = \left(1-\frac{S}{n}\right) + \frac{S}{n}(1-p) = 1 - \frac{Sp}{n}.
\end{align*}
Since $\S_t$ and $\zeta_t$ in different iterations $t$ are mutually independent, we have
\begin{align*}
& \P\left[E_T^{(i)}\right] \geq \P\left[\bigcap_{t=0}^{T-1}E_{t+1}^{(i)}\right] = \prod_{t=0}^{T-1} \P\left[E_{t+1}^{(i)}\mid \bigcap_{t=1}^t E_t^{(i)}\right] = \left(1-\frac{Sp}{n}\right)^T > 3/4 - \frac{T S p}{n}.
\end{align*}
Thus, letting $T < \frac{n}{4Sp}$ can make $\P[E_T^{(i)}] > \frac{1}{2}$. Choose $\nu =3\sqrt{2\epsilon}$, and $\sigma=\sigma_0$ such that $p= \frac{\nu^2}{\sigma^2} = \frac{18\epsilon}{\sigma_0^2}$. For any $i\in[n]$ and any output $\tilde{x}_T^{(i)} \in \mathfrak{X}_T^{(i)}$, we have
\begin{align*}
\E[(\tilde{x}_T^{(i)} - x_*^{(i)})^2] & = \E[\mathbb{I}_{E_T^{(i)}}(\tilde{x}_T^{(i)} - x_*^{(i)})^2 + \mathbb{I}_{\overline{E_T^{(i)}}}(\tilde{x}_T^{(i)} - x_*^{(i)})^2] \geq \E[\mathbb{I}_{E_T^{(i)}}(\tilde{x}_T^{(i)} - x_*^{(i)})^2]\\
& = \E[\mathbb{I}_{E_T^{(i)}}(x_*^{(i)})^2] = \P[E_T^{(i)}] (x_*^{(i)})^2 > \frac{2\nu^2}{9} = 4\epsilon.
\end{align*}
Since the derivations above hold for arbitrary $i\in[S]$ and the $r(x)$ in \eqref{eq:hard_prob_smooth} is $\frac{1}{2n}$-strongly convex ($\mu=\frac{1}{2n}$), we have
\begin{align*}
& \frac{\mu}{2}\E\|\tilde{x}_T^{(i)} - x_*\|_2^2 = \frac{1}{4n}\E\|\tilde{x}_T^{(i)} - x_*\|_2^2  = \frac{1}{4n}\sum_{i=1}^n \E[(\tilde{x}_T^{(i)} - x_*^{(i)})^2] > \epsilon.
\end{align*}
Thus, to find an output $\tilde{x}_T$ such that $\frac{\mu}{2}\E\Norm{\tilde{x}_T - x_*}_2^2  \leq \epsilon$, the abstract scheme requires at least $T \geq \frac{n}{4Sp} = \frac{n \sigma_0^2}{72 S\epsilon}$ iterations.

\textbf{(ii) Non-smooth $f_i$:} We borrow the construction $f_i(\cdot) = \beta\max\{\cdot,-\nu\}$ from~\citet{zhang2020optimal}.
We define that $F_i(x^{(i)}) \coloneqq f(g_i(x)) + \frac{\alpha}{2} [x^{(i)}]^2 = \beta\max\{x^{(i)},-\nu\} + \frac{\alpha}{2} [x^{(i)}]^2$ such that $F(x) = \frac{1}{n}\sum_{i=1}^n F_i(x^{(i)})$. 
Let the domain $\X$ be $[-2\nu,2\nu]^n$. Since the problem is separable over the coordinates, we have $x_*^{(i)} = \argmin_{x\in[-2\nu,2\nu]} F_i(x^{(i)}) = \argmin_{x\in[-2\nu,2\nu]} \left\{\beta\max\{x^{(i)},-\nu\} + \frac{\alpha}{2} (x^{(i)})^2\right\}$ for $x_* = \argmin_{x\in\X} F(x)$. We have 
\begin{align*}
x_*^{(i)} = \begin{cases}
-\beta/\alpha & \text{if }\alpha > \beta/\nu\\
-\nu & \text{if }\alpha\in\frac{\beta}{\nu}[0,1]\end{cases}
,\quad F_i(x_*^{(i)}) \leq \begin{cases}
-\beta^2/(2\alpha) & \text{if }\alpha > \beta/\nu\\
-\beta\nu/2 & \text{if }\alpha\in\frac{\beta}{\nu}[0,1].
\end{cases}
\end{align*}
Since $F_i(0) = 0$, we can derive that $F_i(0) - F_i(x_*^{(i)}) \geq \frac{1}{2}\min\{\beta\nu,\beta^2/\alpha\}$. By the convex conjugate, we have
\begin{align*}
f(\hat{g}^{(i)}) = \max_{y^{(i)}\in[0,\beta]} \{y^{(i)}\hat{g}^{(i)} - \nu(\beta-y^{(i)})\}.
\end{align*} 
Due to similar reason as in the smooth $f_i$ case, the probability of the event $E_T^{(i)}\coloneqq \{\mathfrak{g}_T^{(i)}=\{0\}~\text{or}~\{-\nu\} \land \mathfrak{Y}_T^{(i)} = \{0\} \land \mathfrak{X}_T^{(i)} =\{0\}\}$ can be lower bounded as
\begin{align*}
& \P[E_T^{(i)}] \geq \P\left[\bigcap_{t=0}^{T-1}E_{t+1}^{(i)}\right] = \prod_{t=0}^{T-1} \P\left[E_{t+1}^{(i)}\mid \bigcap_{t=1}^t E_t^{(i)}\right] = \left(1-\frac{Sp}{n}\right)^T > 3/4 - \frac{T S p}{n}.
\end{align*}
Thus, letting $T < \frac{n}{4Sp}$ can make $\P[E_T^{(i)}] > \frac{1}{2}$. Choose $\beta = C_f$, $\nu = \frac{4\epsilon}{C_f}$, and $\sigma=\sigma_0$ such that $p\coloneqq \frac{\nu^2}{\sigma^2} = \frac{16\epsilon^2}{C_f^2 \sigma_0^2}$. For any $i\in[n]$ and any output $\tilde{x}_T^{(i)} \in \mathfrak{X}_T^{(i)}$, we have
\begin{align*}
\E[F_i(\tilde{x}_T^{(i)}) - F_i(x_*^{(i)})] & = \E\left[\mathbb{I}_{E_T^{(i)}}\left(F_i(\tilde{x}_T^{(i)}) - F_i(x_*^{(i)})\right) + \mathbb{I}_{\overline{E_T^{(i)}}}\left(F_i(\tilde{x}_T^{(i)}) - F_i(x_*^{(i)})\right)\right] \geq \E\left[\mathbb{I}_{E_T^{(i)}}\left(F_i(\tilde{x}_T^{(i)}) - F_i(x_*^{(i)})\right)\right]\\
& = \E\left[\mathbb{I}_{E_T^{(i)}}\left(F_i(0) - F_i(x_*^{(i)})\right)\right]  = \P[E_T^{(i)}] \left(F_i(0) - F_i(x_*^{(i)})\right) >\min\{\beta\nu,\beta^2/\alpha\}/4 = \epsilon.
\end{align*}
Since the derivations above hold for arbitrary $i\in[S]$, we can derive that
\begin{align*}
\E[F(\tilde{x}_T) - F(x_*)] &= \frac{1}{n}\sum_{i=1}^n \E[F_i(\bar{x}^{(i)}) - F_i(x_*^{(i)})] > \epsilon.
\end{align*}
Thus, to find a $\tilde{x}_T$ such that $\E[F(\bar{x}) - F(x_*)]  \leq \epsilon$, the abstract scheme requires at least $T \geq \frac{n}{4Sp} = \frac{n C_f^2 \sigma_0^2}{64 S\epsilon^2}$ iterations.

\section{Application to GDRO with \texorpdfstring{$\phi$}~-divergence}\label{sec:gdro_rates}

We discuss two examples of the GDRO problem with $\phi$-divergence: CVaR divergence with a hyper-parameter $\alpha\in(0,1)$ and $\chi^2$-divergence with a hyper-parameter $\lambda>0$. We compare ALEXR to the following baselines:

$\bullet$ SMD~\citep{nemirovski2009robust,zhang2023stochastic}: It can be applied to the GDRO problem in \eqref{eq:penalized_gdro} with CVaR divergence, where the dual mirror step with the entropy distance-generating function can be efficiently solved by projection onto the permutahedron~\citep{lim2016efficient}. Moreover, SMD can also be applied to the worst-group DRO problem~\citep{sagawa2019distributionally} (i.e., $\lambda=0$ in \eqref{eq:penalized_gdro} or $\alpha=\frac{1}{n}$ in CVaR). The iteration complexity of SMD is $T = O(\frac{\log n}{\epsilon^2})$. Besides, it requires $O(n\log n)$ computational cost for performing the dual projection and $O(n)$ oracles in each iteration. Note that SMD cannot be applied to the GDRO problem in \eqref{eq:penalized_gdro} with $\chi^2$-divergence due to the non-linear penalty term.

 $\bullet$ OOA~\citep{sagawa2019distributionally}: This algorithm can be viewed as a variant of the SMD algorithm with the dual gradient estimator $[0,\dotsc,n \ell(w_t;z_t^{(i_t)}),\dotsc,0]^\top$ for some $i_t\in [n]$ such that it only requires $O(1)$ oracles per iteration. But the dual projection cost in each iteration is still $O(n\log n)$. The iteration complexity of SMD is $T = O(\frac{n^2\log n}{\epsilon^2})$, which is also independent of $\alpha$. OOA is not applicable to the GDRO problem in \eqref{eq:penalized_gdro} with $\chi^2$-divergence either.

It comes to our attention that the NOL algorithm~\citep{zhang2023stochastic} designed for the worst-group DRO problem, i.e., $\lambda=0$ in \eqref{eq:penalized_gdro}, can achieve $T=O(\frac{n\log n}{\epsilon^2})$ iteration complexity in high probability with per-iteration $O(1)$ oracles. However, this result cannot be extended to the GDRO problem with CVaR or $\chi^2$-divergence, since their proof technique relies on powerful tools for multi-armed bandits. Besides, \citet{soma2022optimal} also consider the GDRO problem with CVaR divergence but their convergence analysis suffers from dependency issues, as pointed out in~\citet{zhang2023stochastic}. Recently, \citet{DBLP:journals/corr/abs-2310-03234} studied non-smooth weakly convex FCCO problems and proposed an algorithm SONX, which can be applied to solving GDRO with CVaR divergence. However, their algorithm does not leverage the convexity of the inner function and hence suffers from a worse complexity of $O(\frac{n}{S\sqrt{B}\epsilon^6})$. 

\subsection{GDRO with CVaR divergence}\label{sec:gdro_cvar}

GDRO with CVaR divergence can be formulated as \eqref{eq:primal} with $f_i(\cdot) = \alpha^{-1}(\cdot)_+$, $\alpha\in(0,1)$ and $g_i(w,c) = R_i(w) - c$ such that $C_f = \frac{1}{\alpha}$ and $C_g = C_R+1$, where $C_R$ is the Lipschitz constant of $R_i$. The dual update \eqref{eq:dual_update} of ALEXR with $\psi_i(\cdot) = \frac{1}{2}(\cdot)^2$ has the closed-form expression $y_{t+1}^{(i)} = \begin{cases}
\mathrm{Proj}_{[0,\alpha^{-1}]}[y_t^{(i)} + (1/\tau) \tilde{g}_t^{(i)}],& i\in \S_t\\
y_t^{(i)},&i\notin \S_t
\end{cases}$.

The worst-case estimate of the $\Omega_\Y^0$ term in Theorem~\ref{thm:ALEXR} is $\Omega_\Y^0 \leq \frac{n C_f^2}{2} = \frac{n}{2\alpha^2}$ when $\psi_i = \frac{1}{2}(\cdot)^2$. However, it could be much smaller than $\frac{n}{2\alpha^2}$ in practice since $\tilde{y}^{(i)} =0$ for those coordinates $i$ that satisfy $R_i(\bar{w}) \leq \bar{c}$, i.e., the ALEXR algorithm can benefit from the ``sparsity'' of $\tilde{y}^{(i)} \in \partial f_i(g_i(\bar{w},\bar{c}))$, where $(\bar{w},\bar{c})$ is the output of the algorithm. {In particular, when $(\bar{w},\bar{c})$ is close to the optimal solution, then roughly about $\alpha n$ number of groups such that $[R_i(\bar{w}) -  \bar{c}]_+>0$. As a result, $\Omega_\Y^0  = \E[\sum_{i=1}^n U_{\psi_i}(\tilde{y}^{(i)},0)] \approx \frac{n\alpha C_f^2}{2} = \frac{n}{2\alpha}$. 

\subsection{GDRO with \texorpdfstring{$\chi^2$}~- divergence}
GDRO with $\chi^2$- divergence can be formulated as \eqref{eq:primal} with $f_i(\cdot) = \lambda\left(\frac{1}{4}(\cdot+2)_+^2 - 1\right)$ and $g_i(w,c) = (R_i(w) - c)/\lambda$ such that $C_f = \frac{\max\{B_R - \underline{c},B_R+\overline{c}\}}{2}$ and $C_g = \frac{C_R+1}{\lambda}$, where $B_R \coloneqq \max_{w}|R_i(w)|$ and a valid choice of $\underline{c},\overline{c}$ is $\underline{c}=-\lambda$, $\overline{c}=B_R$ (See Appendix A.3 in~\citealt{levy2020large}). In this case, the proximal mapping of $f_i^*(y^{(i)}) = \frac{\lambda}{2}(y^{(i)}/\lambda-1)^2$ with $\psi_i(\cdot) = \frac{1}{2}(\cdot)^2$ can also be efficiently solved. We can also consider the GDRO problem with a convex regularization term $r(x)$. We can choose either $\psi_i=f_i^*$ or $\psi_i(\cdot) = \frac{1}{2}(\cdot)^2$. 

\begin{table}[t]
\caption{Comparison of iteration complexities, dual projection cost, and per-iteration \#oracles for achieving $\epsilon$-optimal solution of the GDRO problem in~(\ref{eq:penalized_gdro}) in terms of $\E[F(w_{\text{out}}) - F(w_*)]\leq \epsilon$ in the merely convex case and $\frac{\mu}{2}\E\Norm{w_{\text{out}} - w_*}_2^2\leq \epsilon$ in the $\mu$-strongly convex case, where $x_{\text{out}}$ is the output of each algorithm. We hide other constant quantities except for $n$, variances $\sigma_0^2, \sigma_1^2, \delta^2$, and batch sizes $B,S$.  Besides, $\tilde{O}$ hides $\text{poly}\log(1/\epsilon)$ factors. }
\label{tab:gdro}
\setlength{\tabcolsep}{6pt} 
\renewcommand{\arraystretch}{1.5} 
\centering
\scalebox{0.8}{
\begin{threeparttable}[b]
\centering
\begin{tabular}{cccccc}\toprule[0.02in]
$\phi$-Divergence & Algorithm   &  Per-Iter \#Oracles  & Dual Proj. & \multicolumn{2}{c}{Iteration Complexity} \\\midrule
\multirow{3}{*}{CVaR} & SMD& \textcolor{red}{$O(n)$} & $O(n\log n)$  & \multicolumn{2}{c}{$O\left(\frac{\log n}{\epsilon^2}\right)$} \\
& OOA & $O(1)$  & $O(n\log n)$  & \multicolumn{2}{c}{$O\left(\frac{n^2\log n}{\epsilon^2}\right)$}  \\
& ALEXR &$O(1)$ & $O(1)$  & \multicolumn{2}{c}{$O\left(\frac{\sqrt{n}}{\alpha^2\sqrt{S}\epsilon} + \frac{1}{\alpha^2\epsilon^2} + \frac{\delta^2}{S\epsilon^2} + \frac{\sigma_1^2}{\alpha^2 B\epsilon^2} + \frac{\sigma_0^2\Omega_\Y^0}{BS \epsilon^2}\right)$\tnote{\color{red}$\dagger$}} \\\arrayrulecolor{black!30}\midrule
\multirow{2}{*}{$\chi^2$}& \multirow{2}{*}{ALEXR} & \multirow{2}{*}{$O(1)$} & \multirow{2}{*}{$O(1)$} & Merely Convex $r$ & Strongly Convex  $r$\\\cmidrule(lr){5-6}
 &  & &  & $O\left(\frac{\sqrt{n}}{\lambda \sqrt{S}\epsilon} + \frac{1}{\lambda^2\epsilon^2} + \frac{\delta^2}{S\epsilon^2} + \frac{\sigma_1^2}{B\epsilon^2} + \frac{\sigma_0^2 \Omega_\Y^0}{BS\epsilon^2}\right)$  & $\tilde{O}\left(\frac{\sqrt{n}}{\lambda\sqrt{S\mu}} + \frac{1}{\mu\lambda^2 \epsilon} + \frac{n \sigma_0^2}{BS \epsilon} + \frac{\sigma_1^2}{\mu B\epsilon} + \frac{\delta^2}{\mu S\epsilon}\right)$\\
\bottomrule[0.02in]
\end{tabular}
\begin{tablenotes}
\item [\textcolor{red}{$\dagger$}] {\small The worst-case estimate of $\Omega_\Y^0$ is $\frac{n}{2\alpha^2}$, but it could be much smaller than $\frac{n}{2\alpha^2}$ in practice, as explained in Remark~\ref{sec:gdro_cvar}.}
\end{tablenotes}
\end{threeparttable}}
\end{table}

\subsection{Comparison with Baselines}

In Table~\ref{tab:gdro}, we compare our ALEXR to the baseline algorithms OOA and  SMD. It is notable that although SMD has a better iteration complexity for CVaR divergence, it requires $O(n)$ oracles at each iteration. In contrast, ALEXR and OOA only require $O(1)$ oracles in each iteration. 
In the worst case, we have $\Omega_\Y^0 = O(n/\alpha^2)$ for CVaR-penalized GDRO, then ALEXR has a better complexity than OOA  when $\frac{1}{\alpha} = o(\sqrt{n\log n})$. In practice,  we have $\Omega_\Y^0 = O(n/\alpha)$ for CVaR-penalized GDRO, then ALEXR has a better complexity than OOA when $\frac{1}{\alpha} = o(n\log n)$. In addition, OOA cannot enjoy the parallel speedup with respect to the inner batch size $B$ due to its scaled dual gradient estimator. Moreover, we also provide the iteration complexity of ALEXR on this the GDRO problem with $\chi^2$-divergence, with or without a strongly convex regularizer.

\section{More Details of Experiments}\label{sec:exp_details}

All algorithms are implemented using the PyTorch framework. Experiments are conducted on a workstation with the 12th Gen Intel(R) Core(TM) i7-12700K CPU with 20 logical cores. 

\subsection{ Group Distributionally Robust Optimization}\label{sec:gdro_data_details}


\subsubsection{Data Preprocessing}

\textbf{Adult dataset:} We construct 83 groups for the Adult dataset according to income (``$>$50K'', ``$\leq$50K''), race (``white'', ``black'', ``other''), sex (``female'', ``male''), age (``$\leq$30'', ``30-45'', ``$>$45''), relationship (``single'', ``not\_single''), and education (``higher'', ``others''), where we discard those groups with less than 50 data points. Following \citet{platt199912}, we transform both continuous and categorical features into binary features, resulting in a 122-dimensional feature vector for each data. 

\textbf{CelebA dataset:} We construct 160 groups for this dataset according to 4 binary attributes (``blond hair'', ``male'', ``mouth slightly open'', ``smiling'') and 10 types of additive Gaussian noises (means -0.08:0.02:0.1 and variance 0.08) to the images. Each image of the CelebA dataset is resized to 224$\times$224$\times$3, normalized, and center-cropped. Then, we extract 512-dim feature vectors for those preprocessed images from the last convolutional layer of a ResNet18 pre-trained on ImageNet. 


\subsubsection{Additional Results}

The first two columns of Figure~\ref{fig:others} show the existence of rare groups in the datasets. The last two columns of Figure~\ref{fig:others} demonstrate that the actual value of $\Omega_\Y^0$ is indeed much smaller than its worst-case estimate $\frac{n}{2\alpha^2}$, which verifies the claims in the remark below Theorem~\ref{thm:ALEXR} and  Section~\ref{sec:gdro_cvar}.

\begin{figure}[ht]
	\subfigure{
		\centering
		\includegraphics[width=0.23\linewidth]{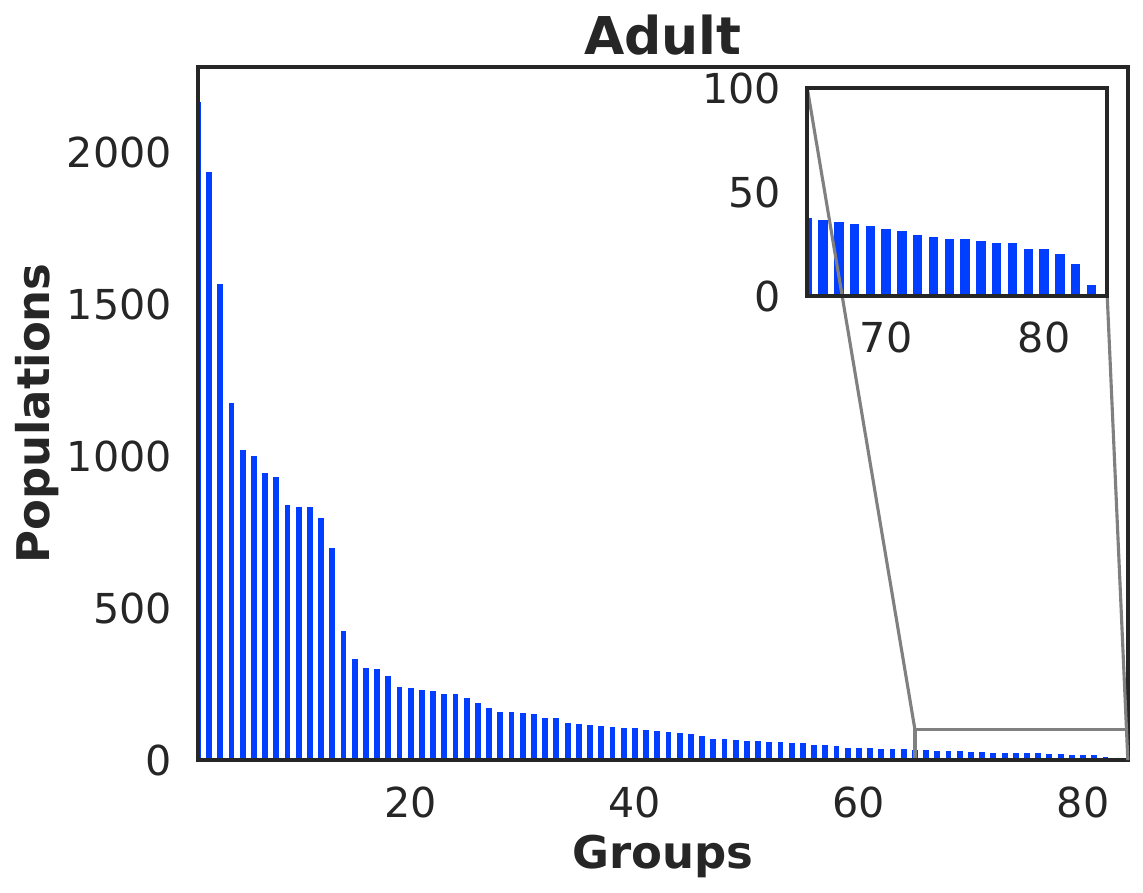}
	}
	\subfigure{	\centering
		\includegraphics[width=0.23\linewidth]{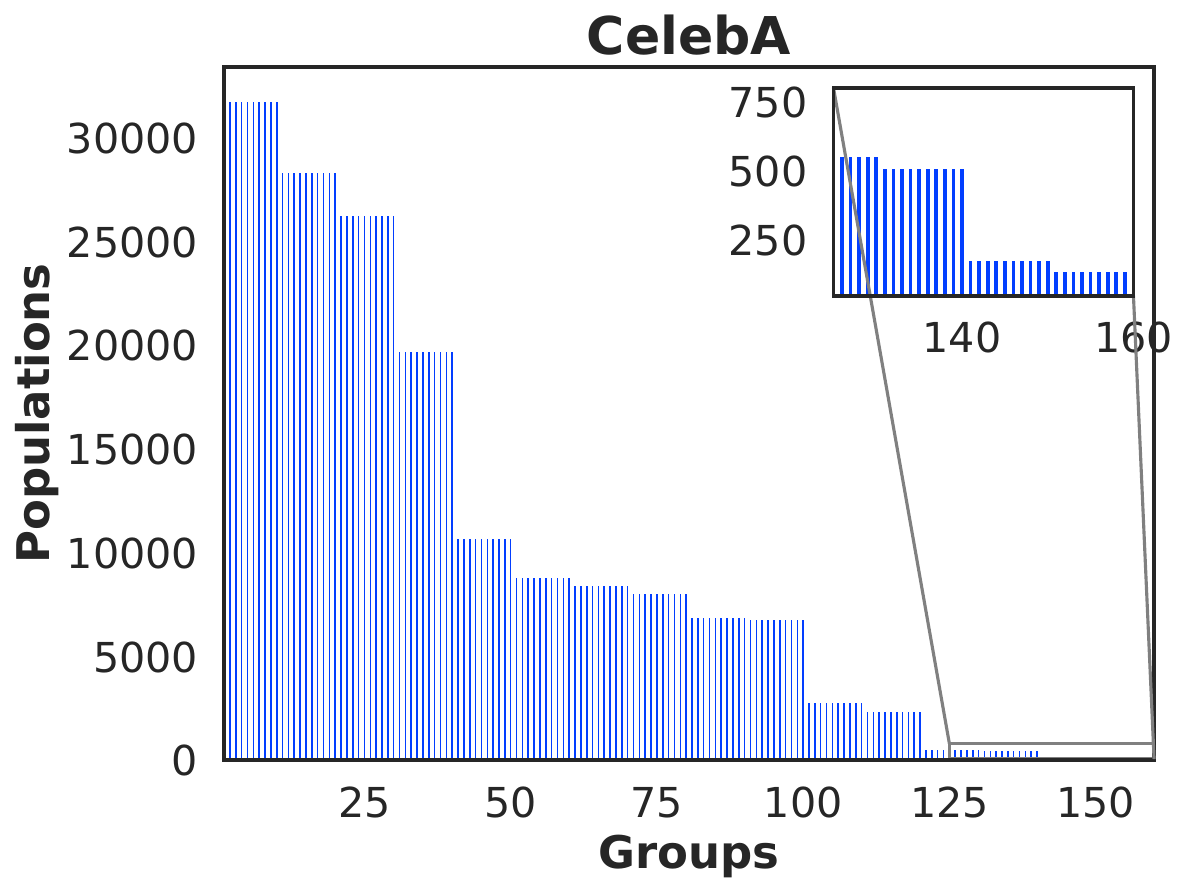}
	}
	\subfigure{	\centering
		\includegraphics[width=0.23\linewidth]{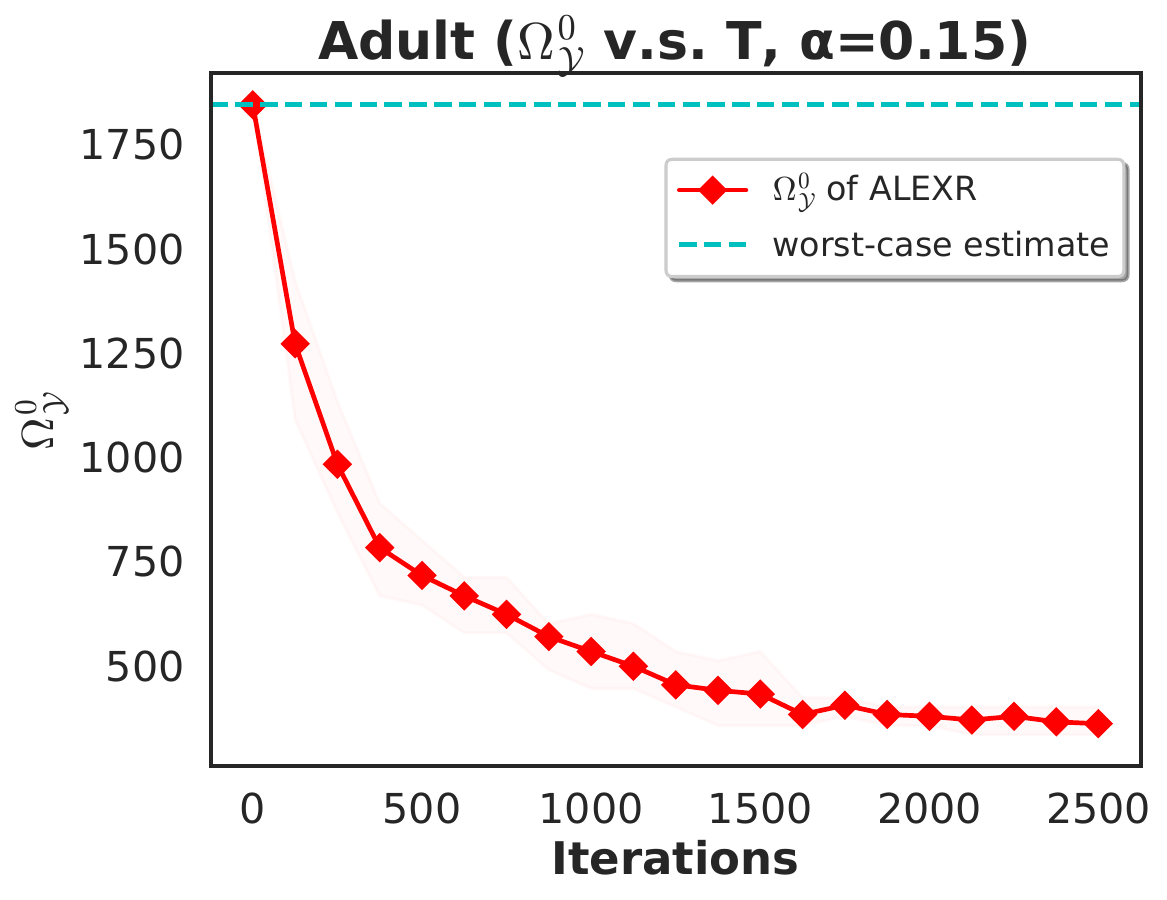}
	}
 	\subfigure{	\centering
		\includegraphics[width=0.23\linewidth]{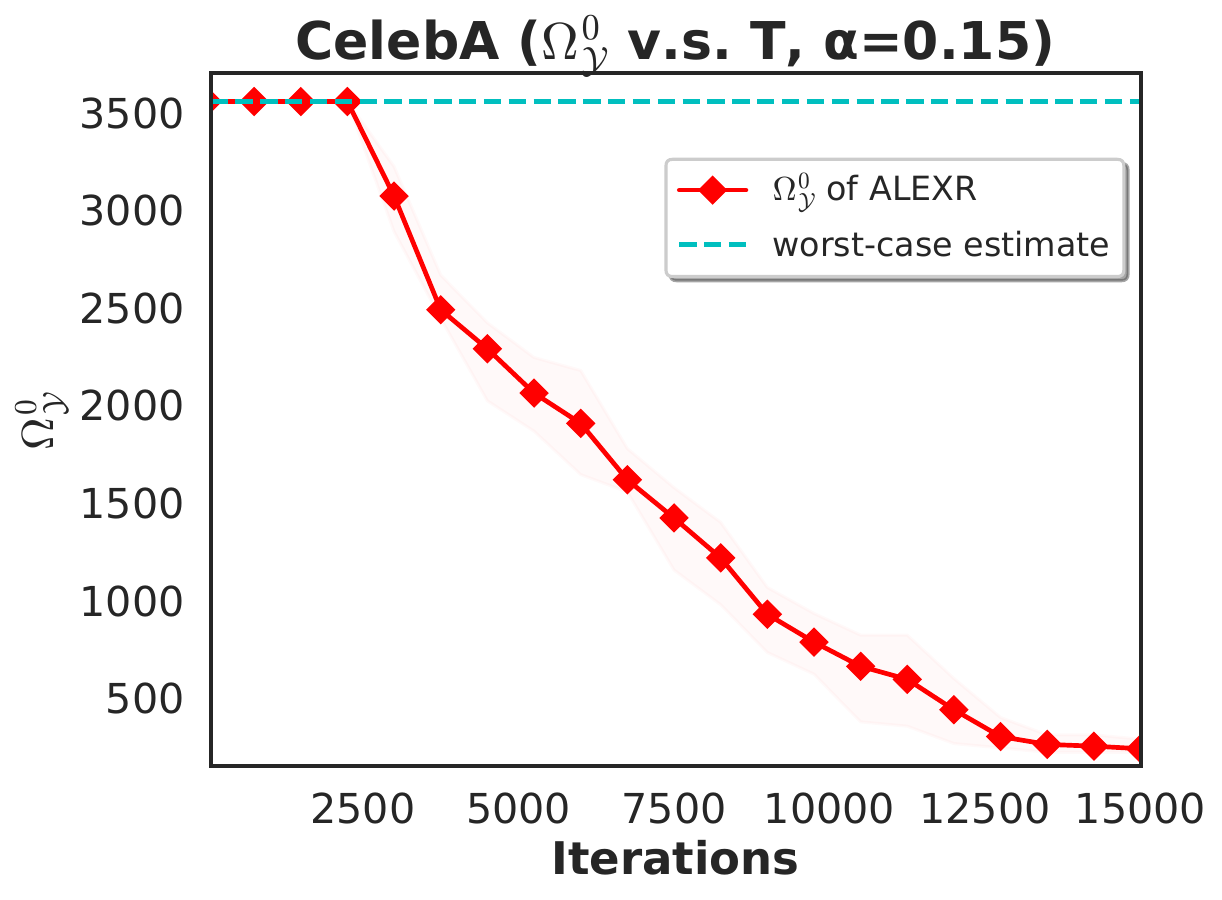}
	}
	\caption{Group populations and the computed values of $\Omega_\Y^0$.}
	\label{fig:others}
\end{figure}

\subsection{Partial AUC Maximization with Restricted TPR}




\begin{table}[htp]
	\centering 	\caption{Statistics of datasets used in the partial AUC maximization experiments. Here $n_+$ and $n_-$ refer to the numbers of positive and negative data in the train and validation splits.}
	\label{tab:pauc_data_stats}	\scalebox{0.99}{		\begin{tabular}{c|cc|cc}
\toprule[.1em]
\multirow{2}{*}{Datasets} & \multicolumn{2}{c|}{Train} &  \multicolumn{2}{c}{Validation}\\\cmidrule{2-5}
& $n_+$ & $n_-$& $n_+$ & $n_-$\\	\midrule
Covtype  & 889 & 178,587 & 252 & 59,573 \\
Higgs & 4,676 & 4,172,030 & 582 & 499,418 \\
Cardiomegaly & 1,950 & 76,518 & 240& 10,979 \\
Lung-mass & 3,988 & 74,480 & 625 & 10,594 \\
\bottomrule
	\end{tabular}}
\end{table}




\end{document}